\newtheorem{theorem}{Theorem}[section]
\newtheorem{proposition}[theorem]{Proposition}
\newtheorem{lemma}[theorem]{Lemma}
\theoremstyle{definition}
\newtheorem{remark}[theorem]{Remark}
\DeclareMathOperator{\mon}{mon}
\DeclareMathOperator{\card}{card}
\DeclareMathOperator{\re}{Re}
\DeclareMathOperator{\spa}{span}
\title{Monomial expansions of $H_{p}$--functions in infinitely many variables}
\author{Andreas Defant\footnote{Institut f\"ur Mathematik. Universit\"at Oldenburg. D-26111 Oldenburg (Germany) } 
\and Leonhard Frerick\footnote{Fachbereich IV - Mathematik, Universit\"{a}t Trier, D-54294 Trier (Germany)} 
\and Manuel Maestre\footnote{Dep. An\'alisis Matem\'atico. Fac. Matem\'aticas. Universidad de Valencia. 46100 Burjassot (Spain)} \and 
Pablo Sevilla-Peris \footnotemark[1] \ \footnote{IUMPA. Universitat Polit\`ecnica de Val\`encia. 46010 Valencia (Spain)}}
\date{}
\begin{document}
\maketitle

\footnotetext{The  first, third and fourth  authors were  supported  by  MICINN and FEDER Project MTM2008-03211. The third author was
also supported by Prometeo 2008/101 and by MEC Grant  PR2010-0374.}
\footnotetext{Mathematics Subject Classification (2010): 46E50, 42B30, 30B50, 46G25}

\begin{abstract}
\noindent Each bounded holomorphic function on the infinite dimensional polydisk $\mathbb{D}^\infty$, $f \in H_\infty( \mathbb{D}^\infty)$,  defines  a formal
monomial series expansion that in general does not converge to $f$.
The set   $\mon H_\infty( \mathbb{D}^\infty)$ contains all
$ z $'s in which the  monomial series expansion of each function $f \in H_\infty(\mathbb{D}^\infty)$ sums up to $f(z)$. Bohr, Bohnenblust and Hille,
showed that it  contains
$\ell_{2} \cap \mathbb{D}^\infty$, but does not contain any of the slices $\ell_{2+\varepsilon} \cap \mathbb{D}^\infty$. This was done
in the context of Dirichlet series and our
article is very much inspired by  recent deep developments in this direction.
Our main contribution shows that $z \in \mon H_\infty( \mathbb{D}^\infty)$ whenever
$\overline{\lim} \big( \frac{1}{\log n} \sum_{j=1}^{n} z^{* 2}_{j} \big)^{1/2} < 1/\sqrt{2}$, and conversely
$\overline{\lim} \big( \frac{1}{\log n} \sum_{j=1}^{n} z^{* 2}_{j} \big)^{1/2} \leq 1$ for each $z \in \mon H_\infty( \mathbb{D}^\infty)$.
The Banach space $H_\infty(\mathbb{D}^\infty)$ can be identified with the Hardy space $H_\infty(\mathbb{T}^\infty)$;
this  motivates a study of sets of monomial convergence of  $H_p$-functions on
$\mathbb{T}^\infty$ (consisting of all $z$'s in $\mathbb{D}^{\infty}$ for which the series $\sum \hat{f}(\alpha) z^{\alpha}$ converges). We show that
$\mon H_\infty( \mathbb{T}^\infty) = \mon H_\infty( \mathbb{D}^\infty)$ and $\mon H_{p}( \mathbb{T}^\infty) = \ell_{2} \cap \mathbb{D}^\infty$
for $1 \leq p < \infty$ and give a representation of $H_{p}( \mathbb{T}^\infty)$ in terms of holomorphic functions on $\mathbb{D}^{\infty}$. This
links our circle of ideas with well-known results due to  Cole and Gamelin.

\end{abstract}

\section{Introduction}

Hilbert in \cite{Hi09} was among the very first who started a systematic study  of the concept of analyticity
for functions in infinitely many
variables. According to Hilbert, an analytic function in infinitely many variables is
a $\mathbb{C}$-valued function defined on the infinite dimensional polydisc $\mathbb{D}^{\infty}$
which  has a pointwise convergent monomial series expansion:
\begin{equation}\label{monexp}
f(z)=\sum_{\alpha\in \mathbb{N}_{0}^{(\mathbb{N})}} c_{\alpha} z^{\alpha}\,\,, \,\,\,\,z \in \mathbb{D}^{\infty} \, ,
\end{equation}
where $\mathbb{N}_{0}$ stands for the non-negative integers, $\mathbb{N}_{0}^{(\mathbb{N})}$ denotes the set of multi-indices on $\mathbb{N}_{0}$ (i.e. finite sequences
of elements of $\mathbb{N}_{0}$) and $\mathbb{D}$ is the open unit disc of $\mathbb{C}$.
In \cite{Hi09} (see also \cite[page~65]{Hilbert_Gesam_3}) he gave the following criterion for a formal power series $\sum_{\alpha} c_{\alpha} z^{\alpha}$ to generate such a function, i.e.
to converge (absolutely) at each
point of $\mathbb{D}^{\infty}$: Every $k$-dimensional section $\sum_{\alpha\in \mathbb{N}_{0}^{k}}
c_{\alpha} z^{\alpha}$ of the series is pointwise convergent on $\mathbb{D}^{k}$,
and moreover
\begin{equation}\label{e}
\sup_{k \in \mathbb{N}} \sup_{z \in \mathbb{D}^{k}} \Big|\sum_{\alpha \in \mathbb{N}_{0}^{k}} c_{\alpha} z^{\alpha}\Big| < \infty\;.
\end{equation}
But this criterion is not correct as was later discovered by Toeplitz (see below). This fact in infinite
dimensions produces a sort of  dilemma: There is no way to develop a complex analysis of functions in infinitely many  variables which simultaneously
handles phenomena on differentiability and analyticity (as happens  in finite dimensions) . Let us explain why.

 Today a holomorphic function $f:\mathbb{D}^{\infty} \rightarrow \mathbb{C}$
is nothing else than a Fr\'{e}chet complex-differentiable function  $f:\mathbb{D}^{\infty} \rightarrow \mathbb{C}$, i.e. a complex-differentiable $\mathbb{C}$-valued function
defined on  $\mathbb{D}^{\infty}$ (we use this symbol to denote the open unit ball  of the Banach space $\ell_{\infty}$ of all bounded scalar sequences).
As usual the Banach space of all bounded holomorphic $f: \mathbb{D}^{\infty}\rightarrow \mathbb{C}$
endowed with the supremum norm will be denoted by $H_\infty(\mathbb{D}^{\infty})$.
Important examples of such functions are bounded $m$-homogeneous polynomials
$P: \ell_{\infty} \rightarrow \mathbb{C}$, restrictions of bounded $m$-linear forms
on $\ell_{\infty} \times \cdots \times \ell_{\infty}$ to the diagonal. The Banach space of all such $P$ is denoted by
$\mathcal{P}(^m \ell_{\infty})$.

It is well known that  every holomorphic $\mathbb{C}$-valued mapping $f$ on the $k$-dimensional polydisc $\mathbb{D}^{k}$ has a
monomial (or power) series expansion which converges to $f$ at every point of $\mathbb{D}^{k}$.
More precisely, for every such $f$ there is a unique family
$(c_{\alpha}(f))_{\alpha\in \mathbb{N}_{0}^{k}}$ in $\mathbb{C}$ such that $f (z) =  \sum_{\alpha \in \mathbb{N}_{0}^{k}} c_{\alpha}(f) z^{\alpha}$
for every $z \in \mathbb{D}^{k}$. The coefficients can be calculated by the Cauchy integral formula
\begin{equation} \label{monomialcoefficients}
c_{\alpha}(f) = \frac{\partial^{\alpha} f (0)}{\alpha !} = \frac{1}{(2 \pi i)^{k}} \int_{|z_{1}| = r}\ldots \int_{|z_k| = r}
\frac{f (z)}{z^{\alpha + 1}} dz_{1} \ldots dz_k \,;
\end{equation}
where $0 < r < 1$ is arbitrary. Clearly, every
holomorphic function $f:\mathbb{D}^{\infty} \rightarrow \mathbb{C}$ in infinitely many variables, if restricted
to a finite dimensional section $\mathbb{D}^{k} \times \{0\}$ that we identify with $\mathbb{D}^{k}$, has  an everywhere convergent  power
series expansion $\sum_{\alpha \in \mathbb{N}_{0}^{k}} c^{(k)}_{\alpha}(f)
z^{\alpha}$, $z \in \mathbb{D}^{k}$. And
from the Cauchy formula \eqref{monomialcoefficients} we can see that
$c_{\alpha}^{(k)}(f) = c_{\alpha}^{(k+1)}(f)$ for $\alpha \in \mathbb{N}_{0}^{k} \subseteq \mathbb{N}_{0}^{k+1}$. Thus again there is a unique family
$(c_{\alpha}(f))_{\alpha \in \mathbb{N}_{0}^{(\mathbb{N})}}$ in $\mathbb{C}$ such that at least for all $k\in \mathbb{N}$
and all  $z \in \mathbb{D}^{k}$
\[
 f (z) = \sum_{\alpha\in \mathbb{N}_{0}^{(\mathbb{N})}} c_{\alpha}(f) z^{\alpha}\,.
\]
This power series  is called the monomial series expansion of $f$, and $c_{\alpha}= c_{\alpha}(f)$ are its monomial coefficients; they satisfy
\eqref{monomialcoefficients} whenever $\alpha\in \mathbb{N}_{0}^{k}$.

At first one could expect that each bounded holomorphic function on $\mathbb{D}^{\infty}$ has a monomial series expansion which again converges at every point
and represents the function
(this is the statement of the   Hilbert's criterion mentioned above) but this is not the case: just take a non-zero functional on $\ell_{\infty}$
that is $0$ on $c_{0}$ (the space of null sequences); its monomial series expansion is $0$ on $\ell_{\infty}$ and clearly does not represent the function. One
could then try with $\mathbb{D}^{\infty}_{0}$ (the open unit ball of $c_{0}$).
Note first that a simple extension argument (see e.g. \cite[Lemma~2.2]{DeGaMa10})
allows to identify all formal power series  satisfying \eqref{e}  with  all bounded holomorphic functions on $\mathbb{D}^{\infty}_{0}$; more precisely,
each $f \in H_\infty(\mathbb{D}^{\infty}_{0})$ has a monomial series expansion as in \eqref{e}, and conversely each
power series satisfying \eqref{e} gives rise to  a unique $f \in H_\infty(\mathbb{D}^{\infty}_{0})$ for which  $c_\alpha = c_\alpha(f)$
for all $\alpha$. This is the reason why Hilbert's criterion is not correct (even if $\mathbb{D}^{\infty}$ is replaced by $\mathbb{D}_{0}^{\infty}$):
\eqref{e} does not imply \eqref{monexp} since by an example of
Toeplitz from \cite{To13} we have
\begin{align} \label{Toeppi}
\exists P \in \mathcal{P}(^2 c_{0})\, \text{ s.t. }
\forall \, \varepsilon >0 \,
\exists\, x \in \ell_{4+\varepsilon} \,:
\,\sum_\alpha |c_\alpha(P) x^{\alpha}| = \infty \, .
\end{align}
This means that there are functions $f \in H_\infty(\mathbb{D}^{\infty}_{0})$ that cannot be pointwise described by its
monomial series expansion as in \eqref{monexp} which at first glance seems scandalous. The main purpose of this article is to give concrete descriptions of the
\textit{set of monomial convergence} of all bounded holomorphic functions on $\mathbb{D}^{\infty}$:
\[
\mon H_\infty(\mathbb{D}^{\infty}) =
\Big\{z\in D^\mathbb{N} \; \big|\, \forall \, f\in H_\infty(\mathbb{D}^{\infty} ) :\; f(z) = \sum_{\alpha\in \mathbb{N}_{0}^{(\mathbb{N})}}
c_\alpha(f)z^{\alpha} \Big \}\,
\]
(where the equality means that the series converges absolutely as a net and coincides with the function) and the set of monomial convergence of all $m$-homogeneous polynomials on $\ell_{\infty}$
\[
\mon \mathcal{P}(^m\ell_{\infty}) =
\Big\{
z\in \ell_{\infty} \; \big|\, \forall P \in \mathcal{P}(^m\ell_{\infty}):\; P (z) = \sum_{\alpha\in \mathbb{N}_{0}^{(\mathbb{N})}} c_\alpha(P) z^{\alpha} \Big\}\,.
\]
Davie and Gamelin showed \cite[Theorem~5]{DaGa89} that every function in $H_\infty(\mathbb{D}^{\infty}_{0})$ can be extended to a function in
$H_\infty(\mathbb{D}^{\infty})$ with the same norm. Using this it can be seen (see e.g. \cite[Remark~6.4]{DeMaPr09}) that
\begin{equation} \label{00}
  \mon H_\infty(\mathbb{D}^{\infty}) = \mon H_\infty(\mathbb{D}^{\infty}_{0})
\text{ and }
\mon \mathcal{P}(^m\ell_{\infty})= \mon \mathcal{P}(^{m} c_{0}) \, .
\end{equation}
Let us collect and comment the results on such sets of convergence known so far. Bohr \cite{Bo13_Goett} proved
\begin{align}\label{11}
\ell_{2} \cap \mathbb{D}^{\infty}
 \subseteq \mon H_{\infty}(\mathbb{D}^{\infty})\,,
\end{align}
and Bohnenblust-Hille in \cite{BoHi31}
\begin{align} \label{22}
\ell_{\frac{2m}{m-1}}
 \subseteq \mon \mathcal{P}(^m\ell_{\infty}).
\end{align}
Moreover,  these two results in a certain sense  are optimal; to see this define
 \[
M:=\sup\big\{1 \leq p \leq \infty \, |\, \ell_{p} \cap  \mathbb{D}^{\infty} \subseteq \mon H_\infty(\mathbb{D}^{\infty})\big\} \,,
\]
 as well as for $m \in \mathbb{N}$
 \[
M_m:=\sup\big\{1 \leq p \leq \infty \, |\, \ell_{p} \subseteq \mon \mathcal{P}(^m\ell_{\infty})\big\}\,.
\]
These are two quantities which measure the size of both sets of convergence in terms of the largest possible
slices $\ell_{p} \cap \mathbb{D}^{\infty} $ included in them.
The definition of $M$ (at least implicitly) appears in \cite{Bo13_Goett}, and
\eqref{11} of course gives that $M \geq 2$. The idea of graduating $M$ through $M_m$ appears first in Toeplitz' article \cite{To13}; clearly the estimate
$4 \leq M_2$ is a reformulation of \eqref{Toeppi}. After Bohr's paper \cite{Bo13_Goett} the intensive search for the exact value of $M$ and $M_m$ was not
succesful for more then 15 years. The final answer was given by Bohnenblust and Hille in \cite{BoHi31}, who were able to prove that
\begin{equation} \label{33}
 M_m = \frac{2m}{m-1}  \,\,\,\,\,\,\, \text{ and } \,\,\,\,\,\,\, M = \frac{1}{2} \,.
\end{equation}
Their original proofs of the upper bounds are clever and ingenious. However, using modern techniques of probabilistic nature, different from the original ones, they were improved in
\cite[Example~4.9 and Example~4.6]{DeMaPr09}:
\begin{align} \label{crelle}
\ell_{2} \,\cap\, \mathbb{D}^{\infty}
 \subseteq \mon H_{\infty}(\mathbb{D}^{\infty}) \,\subseteq\,
 \bigcap_{\varepsilon>0} \ell_{2+\varepsilon},
\end{align}
and
\begin{align} \label{gadafi}
\mon \mathcal{P}(^m\ell_{\infty}) \subseteq \ell_{\frac{2m}{m-1}, \infty}.
\end{align}

So the question remains whether it is possible to ``squeeze'' our two sets of convergence in a more drastic way.
Historically all these results on
sets of monomial convergence (at least those of \eqref{Toeppi},
\eqref{11},\eqref{22}, and \eqref{33})
were  motivated through the theory of Dirichlet series.
An ordinary Dirichlet series is a series of the form $D=\sum_n a_n n^{-s}$, where the $a_n$ are complex coefficients and $s$ is a complex variable. Maximal domains
where such Dirichlet series converge conditionally, uniformly or absolutely are half planes $[\re  > \sigma]$, where $\sigma=\sigma_c,\sigma_u$ or $\sigma_a$ are called the abscissa of conditional, uniform or absolute convergence, respectively. More precisely,
$\sigma_\alpha (D)$ is the infimum of all $r\in\mathbb{R}$ such that  on $[\re  >r]$ we have convergence of $D$ of the requested type $\alpha = c,u$ or $a$.
Each Dirichlet series $D$ defines  a holomorphic  function $d: [\re  > \sigma_c] \rightarrow \mathbb{C}$. If $\sigma_b(D)$ denotes the abscissa of boundedness, i.e. the
infimum of all $r\in\mathbb{R}$ such that  $d$ on the half plane $[\re  >r]$ is bounded, then one of the fundamental theorems of Bohr from \cite{Bo13} is
\begin{align}\label{Bo13}
\sigma_u(D) = \sigma_b(D)\, .
\end{align}
Bohr's so called  \textit{absolute convergence problem} from \cite{Bo13_Goett} asked for the largest possible width of the strip in $\mathbb{C}$ on which a Dirichlet series may
converge uniformly but not absolutely. In other terms, Bohr defined the number $S := \sup_D \sigma_a(D)-\sigma_u(D)$, where
the supremum is taken over all possible Dirichlet series $D$, and asked for its precise value.
In order to explain (in modern terms) Bohr's strategy to attack the problem we denote by $\mathfrak{P}$  the vector space  of all formal power series
$\sum_{\alpha} c_{\alpha} z^{\alpha}$,
and let $\mathfrak{D}$ be the vector space  of all Dirichlet series $\sum a_{n} n^{-s}$. We denote by $(p_{n})_{n}$ the sequence of prime numbers and
$n=p_{1}^{\alpha_{1}} \cdots p_{k}^{\alpha_{k}} = p^{\alpha}$ the unique prime decomposition of $n \in \mathbb{N}$; then the linear bijection:
\begin{align} \label{vision}
\mathfrak{B} : \mathfrak{P} & \longrightarrow \mathfrak{D} \,\,, \,\,\,\,\,\,
\textstyle\sum_{\alpha \in \mathbb{N}_{0}^{(\mathbb{N})}} c_{\alpha} z^{\alpha}
\rightsquigarrow \textstyle\sum_{n=1}^{\infty} a_{n} n^{-s}\,
\end{align}
(where $a_{p^{\alpha}} = c_{\alpha}$) will be called Bohr mapping. Since every holomorphic function $f$ on $\mathbb{D}^{\infty}_{0}$ has a unique monomial
series expansion, $H_{\infty}(\mathbb{D}^{\infty}_{0})$ can be considered as a subspace of $\mathfrak{P}$. In order to see the image of
$H_{\infty}(\mathbb{D}^{\infty}_{0})$ under the Bohr mapping define the following space $ \mathcal{H}_{\infty}$ of all Dirichlet series
$\sum a_{n} n^{-s}$ which have a limit   function $d$ that is  defined and bounded on $[\re > 0]$, and note that
$ \mathcal{H}_{\infty}$  together with  the norm $\Vert \sum a_{n} n^{-s}  \Vert: = \sup_{\re s >0} \vert \sum_{n} a_{n} 1/n^{s} \vert$ forms a Banach space.
Now the following fact, essentially due to Bohr (see also \cite[Lemma~2.3 and Theorem~3.1]{HeLiSe97}), is fundamental: $\mathfrak{B}$ induces a
bijective isometry from $H_{\infty}(\mathbb{D}^{\infty}_{0})$ onto $\mathcal{H}_{\infty}$\,,
\begin{equation} \label{fund}
  H_{\infty}(\mathbb{D}^{\infty}_{0})   =  \mathcal{H}_{\infty}\,.
\end{equation}
Clearly $m$-homogeneous polynomials are mapped to Dirichlet series  $\sum a_n n^{-s}$ in $\mathcal{H}_{\infty}$ for which $a_n=0$
for those $n$ that do not have precisely $m$ prime divisors (counted according to their multiplicity); such series are also called $m$-homogeneous. Then
the restriction of  $\mathfrak{B}$ defines an isometric, onto linear homomorphism between $\mathcal{P} (^{m} c_{0})$ and $\mathcal{H}_{\infty}^{m}$
(the subspace of $\mathcal{H}_{\infty}$ consisting of $m$-homogeneous Dirichlet series):
\begin{equation} \label{fund polin}
  \mathcal{P} (^{m} c_{0}) = \mathcal{H}_{\infty}^{m} \, .
\end{equation}
Using the prime number theorem Bohr in \cite{Bo13_Goett} proved that $S=\frac{1}{M}\,,$
and  concluded from \eqref{11} that $S \leq 1/2$. Shortly after that Toeplitz with his result from \eqref{Toeppi} got   $1/4 \leq S \leq 1/2$.
Although the general theory of Dirichlet series  during the first decades of the last century was one of the most  fashionable   topics in analysis (with Bohr's  absolute
convergence problem   very much in its focus),
the question whether or not $S=1/2$ remained open for a long period.
Finally,  Bohnenblust and Hille \cite{BoHi31} in 1931 in a rather ingenious fashion answered the  problem in the positive. They proved $\eqref{33}$, and got as a
consequence what we now call the Bohr-Bohnenblust-Hille theorem: $S=\frac{1}{2}$.
One of the crucial ideas in the Bohnenblust-Hille approach is that they graduate Bohr's problem: They (at least implicitly) observe that
$S_m=\frac{1}{M_m}$\,,
where
$S_m=\sup \sigma_a(D) - \sigma_u(D)\,,$
the infimum now taken over all $m$-homogeneous Dirichlet series. This allows to deduce from $\eqref{33}$ the lower bound
$ \frac{m-1}{2m} = S_m \leq S$, and hence in the limit case  as desired $ \frac{1}{2} \leq  S$.

We finally briefly summarize the two main theorems of this article. But before, we indicate that recently some  deep new results (and techniques)
 within the Bohr-Bohnenblust-Hille cycle of ideas suggest that  more precise descriptions of $\mon H_\infty(\mathbb{D}^{\infty})$
as well as $\mon \mathcal{P}(^m c_{0})$ should be possible.
From \eqref{Bo13} it can be easily deduced that the fact $S= \frac{1}{2}$ is equivalent to $\sup_{D \in \mathcal{H}_{\infty}} \sigma_{a}(D) =\frac{1}{2}$, i.e.
for each $\varepsilon >0$ and each Dirichlet series $\sum a_n n^{-s}$ in $\mathcal{H}_\infty$ we have that
$\sum_n |a_n| \frac{1}{n^{\frac{1}{2}+ \varepsilon}} < \infty\,,$
and moreover $\frac{1}{2}$ here can not be improved. What about $\varepsilon =0$\,? The answer is yes: It was recently proved in \cite{DeFrOrOuSe11} that the
supremum of all $c \in \mathbb{R}$ such that for every $\sum a_n n^{-s} \in \mathcal{H}_\infty$
\begin{equation} \label{oje}
\sum_{n=1}^\infty |a_n|\,
\frac{
e^{ c\sqrt{\log n\log \log n}}}{n^\frac{1}{2}}
<\infty \,,
\end{equation}
equals $1/\sqrt{2}$. This is just the final step in a long series of results due to (among others) Balasubramanian, Calado, de la  Bret\'{e}che, Konyagin or Queff\'{e}lec
\cite{BaCaQu06,Br08,KoQu01,Qu95}. An interesting consequence is that each  Dirichlet series $\sum a_n n^{-s} \in \mathcal{H}_{\infty}$ even
converges absolutely on the vertical line $[\re =1/2]$. In view of Bohr's  mapping \eqref{vision} we see that the sequence $\big( p_{n}^{-\frac{1}{2}}\big)_{n}$ belongs to
$\mon H_\infty(\mathbb{D}^{\infty})$. This sequence is not contained in $\ell_{2}$ since,  due to the prime number theorem, it up to constants equals
$\big((n \log n)^{-\frac{1}{2}}\big)$. It seems that this sequence 
is the very first known example which really distinguishes $\ell_{2} \cap \mathbb{D}^{\infty}$ and $\mon H_\infty(\mathbb{D}^{\infty})$. We define the set
\[
\mathbf{B} = \Big\{ z \in \mathbb{D}^{\infty} \colon \limsup  \frac{1}{\log n} \sum_{j=1}^{n} z^{* 2}_{j}  < 1 \Big\}\,;
\]
here $z^{*}$ stands for the decreasing rearrangement of $z$ (see below for a full definition). Then our main result is Theorem~\ref{Leonhard} which states
\begin{equation} \label{reform}
\frac{1}{\sqrt{2}} \mathbf{B} \,\subseteq \,\mon H_{\infty}(\mathbb{D}^{\infty}) \,\subseteq\, \mathbf{\overline{B}}\,,
\end{equation}
improving the results from \eqref{11}, \eqref{33}, and \eqref{crelle}. Its  homogeneous counterpart is even more satisfying -- in Theorem~\ref{polinomios}
we prove that the upper inclusion in \eqref{gadafi} is optimal:
\begin{align*}
 \mon \mathcal{P}(^m\ell_{\infty}) = \ell_{\frac{2m}{m-1}, \infty}\,.
\end{align*}
Here our proof heavily depends on the following  recent homogeneous counterpart of \eqref{oje}
due to Balasubramanian, Calado and Queff\'{e}lec \cite[Theorem~1.4]{BaCaQu06}: For each $m$ there exists a constant $A_{m}$ such that for every $\sum a_{n} n^{-s} \in \mathcal{H}_{\infty}^{m}$ we have
\begin{equation} \label{ojeje}
 \sum_{n} \vert a_{n} \vert \frac{(\log n)^{\frac{m-1}{2}}}{n^{\frac{m-1}{2m}}}
\leq A_{m} \sup_{t \in \mathbb{R}} \Big\vert \sum_{n} a_{n} n^{it} \Big\vert \, .
\end{equation}
and again the parameter $\frac{m-1}{2}$  is optimal \cite[Theorem~3.1]{MaQu10}. \\

The Banach space $H_\infty(\mathbb{D}^{\infty}_{0})$ can be isometrically  identified with the Banach space $H_\infty(\mathbb{T}^{\infty})$ of  all $L_\infty$-functions $f:\mathbb{T}^{\infty} \rightarrow\mathbb{C}$  with Fourier coefficients $\hat{f}(\alpha)=0$
for $\alpha \in \mathbb{Z}^{(\mathbb{N})} \setminus \mathbb{N}_{0}^{(\mathbb{N})}$ (here $\mathbb{T}$ denotes the torus, the unit circle of $\mathbb{C}$,
and $\mathbb{T}^{\infty}$ the countable cartesian product of $\mathbb{T}$).
In the last section we prove analogs of the results we obtained for $\mon H_\infty(\mathbb{D}^{\infty})$  and $\mon \mathcal{P}(^m\ell_{\infty})$ within Hardy spaces
$H_p(\mathbb{T}^{\infty})$ and
 $H^m_p(\mathbb{T}^{\infty})$, $1 \leq p < \infty$, of functions and polynomials in infinitely many variables.
We extend and complement results of Cole and Gamelin from \cite{CoGa86}. Our main result in this section are Theorems~\ref{general} and \ref{final}: for every $1 \leq p < \infty$ we have
\[
\mon H_{p}  (\mathbb{T}^{\infty}) = \mon H_{p}^{m}  (\mathbb{T}^{\infty}) = \ell_{2} \cap \mathbb{D}^{\infty} \, .
\]

\bigskip

Let us now fix some more notation and recall some basic definitions. The set of non-negative integers is denoted by $\mathbb{N}_{0}$ and $\mathbb{D}$
and $\mathbb{T}$ respectively denote the open unit disc and circle of $\mathbb{C}$. Following \cite{Ru69} and \cite{Wo91}
$m_{k}$ and $m$ will denote the product of the normalized Lebesgue measure
respectively on $\mathbb{T}^{k}$ and  $\mathbb{T}^{\infty}$ (i.e. the unique rotation invariant Haar measures).
Given a set $\Gamma \subseteq \mathbb{C}$ we write $\Gamma^{(\mathbb{N})} = \bigcup_{k=1}^{\infty} \Gamma^{k}$, where we identify $\Gamma^{k}$
with $\Gamma^{k} \times \{ 0 \}$, (i.e. $\Gamma^{(\mathbb{N})}$ consists of sequences in $\Gamma$ that eventually vanish).
The spaces of $q$-summable sequences ($1 \leq q < \infty$) are denoted by $\ell_{q}$, while $\ell_{\infty}$ and $c_{0}$ are respectively the spaces
of bounded and null sequences. Given $z \in \ell_{\infty}$, its decreasing rearrangement is defined by
\[
z^{*}_{n} := \inf \{ \sup_{j \in \mathbb{N} \setminus J } | z_{j} |  \colon  J \subseteq \mathbb{N} \  , \ \card (J) < n \} \, .
\]
The Lorentz space $\ell_{q, \infty}$ with $1 \leq q < \infty$ consists of those sequences such that $\sup_{n} z^{*}_{n} n^{1/q} < \infty$
(and this supremum defines the norm). It is a well known fact that $\ell_{q, \infty} \subseteq c_{0}$, hence $z^{*}=(|z_{\sigma(n)}|)$ where $\sigma \mathbb{N}\to \mathbb{N}$ is an adequate permutation.
On the other hand, $\ell_{\infty}^{k}$ stands for $\mathbb{C}^{k}$ with the sup norm. \\
Given $k,m \in \mathbb{N}$ we consider the following sets of indices
\begin{gather*}
 \mathcal{M} (m,k) = \{\mathbf{j} = (j_{1}, \dots , j_{m}) \colon 1 \leq j_{1}, \dots , j_{m} \leq k \} = \{1, \ldots , k \}^{m} \\
 \mathcal{J} (m,k) = \{\mathbf{j} \in \mathcal{M} (m,k) \colon 1 \leq j_{1} \leq \dots \leq j_{m} \leq k \} \\
\Lambda (m,k) = \{ \alpha \in \mathbb{N}_{0}^{k} \colon \alpha_{1} + \cdots + \alpha_{k} =m \} \, .
\end{gather*}
An equivalence relation is defined in $\mathcal{M} (m,k)$ as follows: $\mathbf{i} \sim \mathbf{j}$ if there is a permutation $\sigma$ such that $i_{\sigma (r)} = j_{r}$
for all $r$. For each $\mathbf{i} \in \mathcal{M} (m,k)$ there is a unique $\mathbf{j} \in \mathcal{J} (m,k)$ such that $\mathbf{i} \sim \mathbf{j}$.
On the other hand, there is a one-to-one relation between  $\mathcal{J} (m,k)$ and $\Lambda (m,k)$: Given $\mathbf{j}$,
one can define $^{\mathbf{j}}\alpha$ by doing $^{\mathbf{j}}\alpha_{r} = | \{ k \colon j_{m}=r \}|$; conversely, for each $\alpha$, we consider
$\mathbf{j}_{\alpha} = (1, \stackrel{\alpha_{1}}{\dots} , 1, 2,\stackrel{\alpha_{2}}{\dots} ,2 ,
\dots , k \stackrel{\alpha_{k}}{\dots} ,k)$. Note that $\card[\mathbf{j}_{\alpha}] = \frac{m!}{\alpha !}$ for every $\alpha \in \Lambda (m,k)$.
Given a multi-index $\alpha \in \mathbb{N}_{0}^{k}$ we write $\vert \alpha \vert = \alpha_{1} + \cdots + \alpha_{k}$.\\
Taking this correspondence into account, the monomial series expansion of a polynomial $P \in \mathcal{P} (^{m} \ell_{\infty}^{k})$ can be expressed in different ways
(we write $c_{\alpha} = c_{\alpha}(P)$)
\begin{equation} \label{polin varios}
  \sum_{\alpha \in \Lambda(m,k)} c_{\alpha} z^{\alpha} = \sum_{\mathbf{j} \in \mathcal{J}(m,k)} c_{\mathbf{j}} z_{\mathbf{j}}
= \sum_{1 \leq j_{1} \leq \ldots \leq j_{m} \leq k} c_{j_{1} \ldots j_{m}} z_{j_{1}} \cdots z_{j_{m}} \, .
\end{equation}
Following standard notation for each $n \in \mathbb{N}$ we write $\Omega (n)=\vert \alpha \vert$ whenever $n = p^{\alpha}$ (this counts the prime divisors of
$n$, according to their multiplicity). Then
a Dirichlet series $\sum a_{n} n^{-s}$ is called $m$-homogeneous if $a_{n}=0$ for every $\Omega (n) \neq m$. We denote by $\mathcal{H}_{\infty}^{m}$ for the
space of $m$-homogeneous Dirichlet series in $\mathcal{H}_{\infty}$.\\
Finally, the norms for $m$-homogeneous polynomials and $m$-linear forms on $\ell_{\infty}$ are as usual defined by
$\Vert P \Vert = \sup_{z \in \mathbb{D}^{\infty}} \vert P(z) \vert$ and $\Vert A \Vert = \sup_{z_{j} \in \mathbb{D}^{\infty}} \vert A(z_{1} , \ldots , z_{m}) \vert$.\\

If $f \in H_{\infty}(\mathbb{D}^{\infty})$ and $\sigma$ is a permutation then the function $f_{\sigma}$ defined by $f_{\sigma}(z)$ $= f ((z_{\sigma (n)})_{n})$ is
again in $H_{\infty}(\mathbb{D}^{\infty})$. This implies (see e.g. \cite[page~550]{DeGaMaPG08}) that if $z \in \mon H_{\infty}(\mathbb{D}^{\infty})$
then every permutation of $z$ is again in $\mon H_{\infty}(\mathbb{D}^{\infty})$. The same happens for $\mon \mathcal{P} (^{m} \ell_{\infty})$.\\
On the other hand we know from \cite[page~30]{DeMaPr09} that  if $z$ belongs to $\mon H_{\infty}(\mathbb{D}^{\infty})$ (or to $\mon \mathcal{P} (^{m} \ell_{\infty})$)
then it is in $c_{0}$. Hence its decreasing rearrangement is a permutation of $z$. Thus $z \in \mon H_{\infty}(\mathbb{D}^{\infty})$ if and only if
$z^{*} \in \mon H_{\infty}(\mathbb{D}^{\infty})$ (and the same for $\mon \mathcal{P} (^{m} \ell_{\infty})$).

\section{Homogeneous polynomials}
By \eqref{fund polin} there is a bijection between $\mathcal{P} (^{m} c_{0})$ and $\mathcal{H}_{\infty}^{m}$.
We know now thanks to \eqref{ojeje} the precise behaviour in the side of Dirichlet
series. Our aim is to transfer this knowledge to the polynomials side in order to get a better understanding. We do that in the following result.

\begin{theorem} \label{polinomios}
For each $m \in \mathbb{N}$ we have
$\mon \mathcal{P}(^{m} \ell_{\infty}) = \ell_{\frac{2m}{m-1}, \infty}$. Moreover, there exists a constant $C>0$ such that
\begin{equation}\label{eq:polinomios}
 \sum_{\vert \alpha \vert =m} \vert c_{\alpha}(P) z^{\alpha} \vert
\leq C^{m} \Vert z \Vert^{m} \Vert P \Vert \, .
\end{equation}
for every $z \in \ell_{\frac{2m}{m-1}, \infty}$ and every $P \in  \mathcal{P}(^{m} \ell_{\infty})$.
\end{theorem}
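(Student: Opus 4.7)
The inclusion $\mon \mathcal{P}(^m\ell_\infty) \subseteq \ell_{2m/(m-1),\infty}$ is exactly \eqref{gadafi}, so the entire content of the theorem is the reverse inclusion together with the quantitative estimate \eqref{eq:polinomios}. My plan is to derive \eqref{eq:polinomios} directly from the sharp Dirichlet-series estimate \eqref{ojeje} of Balasubramanian--Calado--Queff\'elec via the Bohr transform.

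Fix $P\in\mathcal{P}(^m\ell_\infty)$ and $z\in\ell_{\frac{2m}{m-1},\infty}$. By the permutation-invariance of $\mon \mathcal{P}(^m\ell_\infty)$ and of the quasi-norm $\|\cdot\|_{\ell_{2m/(m-1),\infty}}$ recorded at the end of the introduction, we may assume that $(|z_j|)_j$ is already decreasing, so that
\[
|z_j|\ \le\ \|z\|_{\ell_{\frac{2m}{m-1},\infty}}\, j^{-\frac{m-1}{2m}}\qquad (j\in\mathbb{N}).
\]
Using the identification \eqref{fund polin}, the polynomial $P$ corresponds under the Bohr map $\mathfrak{B}$ to an $m$-homogeneous Dirichlet series $\sum a_n n^{-s}\in\mathcal{H}_\infty^m$ with $\|D\|_{\mathcal{H}_\infty}=\|P\|$ and $a_{p^\alpha}=c_\alpha(P)$. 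Thus the sum to estimate is
\[
\sum_{|\alpha|=m}\bigl|c_\alpha(P)\,z^\alpha\bigr|\ =\ \sum_{\Omega(n)=m}|a_n|\prod_{j}|z_j|^{\alpha_j},\qquad n=p^\alpha.
\]

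The heart of the proof is the pointwise bound of $|z^\alpha|$ by precisely the kernel appearing in \eqref{ojeje}. Write $n=p^\alpha=\prod_j p_j^{\alpha_j}$. By the prime number theorem we have $p_j\le C\,j\log j$ for $j\ge 2$ (and hence, up to a multiplicative constant absorbed into $C$, also for $j=1$), which rearranges to
\[
j\ \ge\ c\,\frac{p_j}{\log p_j}\,.
\]
Taking the product weighted by the $\alpha_j$'s and applying the weighted AM--GM inequality to the weights $\beta_j=\alpha_j/m$ and values $\log p_j$,
\[
\prod_j(\log p_j)^{\alpha_j}\ \le\ \Bigl(\tfrac{1}{m}\sum_j \alpha_j\log p_j\Bigr)^{\!m}\ =\ \Bigl(\tfrac{\log n}{m}\Bigr)^{\!m},
\]
so that $\prod_j j^{\alpha_j}\ge c^{\,m}\,n/(\log n)^m$ (with an innocuous $m^m$ absorbed). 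Substituting,
\[
|z^\alpha|\ \le\ \|z\|^{m}\prod_j j^{-\alpha_j\frac{m-1}{2m}}
\ \le\ C^{m}\,\|z\|^{m}\,\frac{(\log n)^{\frac{m-1}{2}}}{n^{\frac{m-1}{2m}}}.
\]

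Now \eqref{ojeje} is exactly tailored to this kernel: summing over all $n$ with $\Omega(n)=m$,
\[
\sum_{|\alpha|=m}|c_\alpha(P)\,z^\alpha|\ \le\ C^{m}\|z\|^{m}\sum_{\Omega(n)=m}|a_n|\,\frac{(\log n)^{\frac{m-1}{2}}}{n^{\frac{m-1}{2m}}}
\ \le\ C^{m}A_m\|z\|^{m}\|P\|,
\]
which, after possibly enlarging $C$ to absorb $A_m^{1/m}$, yields the desired \eqref{eq:polinomios} and in particular the inclusion $\ell_{\frac{2m}{m-1},\infty}\subseteq \mon \mathcal{P}(^m\ell_\infty)$. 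The only delicate step is the arithmetic comparison $\prod j^{\alpha_j}\gtrsim n/(\log n)^m$: it requires both the PNT estimate for $p_j$ and the weighted AM--GM trick, and the exponents have to line up exactly with those in \eqref{ojeje}---this is where the proof either works cleanly or not at all.
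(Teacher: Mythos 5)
Your argument follows essentially the same route as the paper: reduce to the lower inclusion via \eqref{gadafi}, pass to the decreasing rearrangement, use the prime number theorem to compare $j$ with $p_j/\log p_j$, and then use the weighted AM--GM inequality (the paper's inequality \eqref{observacion} with $x_j=\log p_j$) to dominate $|z^\alpha|$ by $C^m\Vert z\Vert^m(\log n)^{\frac{m-1}{2}}n^{-\frac{m-1}{2m}}$, which is exactly the Balasubramanian--Calado--Queff\'elec kernel. All of that is correct and does prove the set equality $\mon\mathcal{P}(^m\ell_\infty)=\ell_{\frac{2m}{m-1},\infty}$, since for each fixed $m$ any constant suffices for absolute convergence.

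The gap is in the very last step. The estimate \eqref{eq:polinomios} asserts a bound of the form $C^m$ with $C$ \emph{independent of} $m$ (this is what makes it ``hypercontractive'', and it is essential for the later applications, e.g.\ Remark~\ref{ele20} and the proof of Theorem~\ref{Leonhard}, where one sums over $m$). Your final inequality produces the constant $C^mA_m$, where $A_m$ is the unspecified constant from \eqref{ojeje}; writing that you ``enlarge $C$ to absorb $A_m^{1/m}$'' silently assumes $A_m\le K^m$ for some universal $K$, which is not part of \cite[Theorem~1.4]{BaCaQu06} and is precisely the nontrivial refinement the paper supplies as Lemma~\ref{queffelec}. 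That refinement is not free: its proof requires Lemma~\ref{lemma2}, which in turn rests on Bayart's $L_1$--$L_2$ inequality \eqref{eq:bonami 1} with constant $\sqrt{2}^{\,m}$ and Harris's polarization estimate. So your proposal establishes the qualitative statement but not the quantitative inequality \eqref{eq:polinomios} as stated; to close the gap you must either prove the exponential bound $A_m\le K^m$ or invoke Lemma~\ref{queffelec} in place of \eqref{ojeje}.
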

The inequality \eqref{ojeje} is enough to get the set equality $\mon \mathcal{P}(^{m} \ell_{\infty}) = \ell_{\frac{2m}{m-1}, \infty}$; however, since we are also interested in the
behaviour of the norms \eqref{eq:polinomios} we need the following refinement of \cite[Theorem~1.4]{BaCaQu06}:
\begin{lemma} \label{queffelec}
There is a constant $K>0$ such that for every $m$-homogeneous Dirichlet polynomial $\sum a_{n} n^{-s} \in \mathcal{H}_{\infty}^{m}$,
\[
\sum_{n} \vert a_{n} \vert \frac{(\log n)^{\frac{m-1}{2}}}{n^{\frac{m-1}{2m}}}
\leq K^{m} \sup_{t \in \mathbb{R}} \Big\vert \sum_{n} a_{n} n^{it} \Big\vert \, .
\]
\end{lemma}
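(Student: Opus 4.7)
The plan is to retrace the argument of Balasubramanian--Calado--Queff\'elec for \eqref{ojeje}, now tracking the dependence of every constant on $m$. Via the Bohr mapping \eqref{vision} and the isometric identification \eqref{fund polin}, an $m$-homogeneous Dirichlet polynomial $D(s)=\sum a_n n^{-s}$ is transported to an $m$-homogeneous polynomial $P \in \mathcal{P}(^{m}c_{0})$ of finite type, with $\|P\|=\sup_{t\in\mathbb{R}}|\sum_n a_n n^{-it}|$ and $c_\alpha(P)=a_{p^\alpha}$, so all estimates will be performed on the polynomial side.

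The central ingredient is the polynomial Bohnenblust--Hille inequality
\[
\Bigl(\sum_{|\alpha|=m}|c_\alpha(P)|^{2m/(m+1)}\Bigr)^{(m+1)/(2m)} \leq \mathrm{BH}_m\,\|P\|,
\]
applied with the bound $\mathrm{BH}_m \leq C^m$ for some universal $C$, which follows from the hypercontractive techniques developed by Defant--Frerick and collaborators around the time of this paper. This is the decisive quantitative input: the original argument in \cite{BaCaQu06} leaves $A_m$ implicit precisely because it invokes Bohnenblust--Hille with a constant whose $m$-dependence was then not sufficiently controlled, so our task reduces to inserting $\mathrm{BH}_m \leq C^m$ and retaining exponential-in-$m$ bounds throughout.

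I would then combine this $\ell^{2m/(m+1)}$ bound on the $c_\alpha$ with H\"older's inequality in the conjugate exponents $2m/(m+1)$ and $2m/(m-1)$, performed on dyadic blocks $n \sim 2^{k}$, together with a uniform-in-$m$ Landau-type estimate for $\pi_m(N) := \#\{n \leq N : \Omega(n)=m\}$ of the shape $\pi_m(N) \leq C\,N(\log\log N+C)^{m-1}/((m-1)!\,\log N)$, and Stirling's bound $(m-1)! \geq ((m-1)/e)^{m-1}$. The weight $(\log n)^{(m-1)/2}/n^{(m-1)/(2m)}$ is precisely calibrated to the density of $m$-almost-primes; summation by parts then produces the desired bound $\leq K^m\|D\|$, uniformly in the degree of the polynomial.

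The main obstacle is that a \emph{global} application of Bohnenblust--Hille followed by H\"older against the weight would lead to the divergent sum $\sum_{\Omega(n)=m}(\log n)^{m}/n$, so one really must apply the inequality locally on each dyadic block and exploit the factorial decay $1/(m-1)!$ from the Landau estimate to absorb the $\log$ and $\log\log$ contributions that appear at every scale. Arranging this balancing so that all stray factors collapse into a single exponential constant $K^m$, uniformly in the size of the Dirichlet polynomial, is the technical core of the refinement of \cite[Theorem~1.4]{BaCaQu06}.
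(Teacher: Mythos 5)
Your overall plan---transfer to the polynomial side via \eqref{fund polin} and rerun the argument of \cite{BaCaQu06} keeping track of constants---matches the paper's intention, but the quantitative engine you pick is the wrong one. The paper does \emph{not} feed the full hypercontractive Bohnenblust--Hille inequality into H\"older; it uses Lemma~\ref{lemma2}, the mixed $\ell^{1}(\ell^{2})$ inequality
\[
\sum_{j_{m}=1}^{k}\Big(\sum_{j_{1}\le\cdots\le j_{m-1}\le j_{m}}\vert c_{j_{1}\ldots j_{m}}\vert^{2}\Big)^{1/2}\le m\,\sqrt{2}^{\,m-1}\Big(1+\tfrac{1}{m-1}\Big)^{m-1}\Vert P\Vert ,
\]
which is one of the two \emph{ingredients} of the hypercontractive Bohnenblust--Hille proof, not the inequality itself. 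One then groups the integers $n=p_{j_{1}}\cdots p_{j_{m}}$ by their largest prime factor $p_{j_{m}}$, applies Cauchy--Schwarz in the remaining indices, and verifies that $\sum_{j_{1}\le\cdots\le j_{m-1}\le j_{m}}(\log n)^{m-1}n^{-(m-1)/m}\le K^{m}$ \emph{uniformly in} $j_{m}$, using $n^{-(m-1)/m}\le p_{j_{m}}/n=1/(p_{j_{1}}\cdots p_{j_{m-1}})$ and Mertens-type estimates.

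The gap in your version is structural. The only information you extract from $\Vert D\Vert$ is $\Vert(a_{n})\Vert_{2m/(m+1)}\le C^{m}\Vert D\Vert$ together with the support condition $\{\Omega(n)=m,\ n\le N\}$. By $\ell^{q}$--$\ell^{q'}$ duality, the best bound on $\sum_{n}\vert a_{n}\vert w_{n}$, $w_{n}=(\log n)^{(m-1)/2}n^{-(m-1)/(2m)}$, obtainable from this information alone is $\Vert(a_{n})\Vert_{2m/(m+1)}\,\Vert w\,\mathbf{1}_{\{\Omega(n)=m,\,n\le N\}}\Vert_{2m/(m-1)}$; since $w_{n}^{2m/(m-1)}=(\log n)^{m}/n$ and already $\sum_{p}(\log(2^{m-1}p))^{m}/(2^{m-1}p)=\infty$, this second factor tends to infinity with $N$. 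Hence \emph{no} combination of H\"older, dyadic decomposition, Landau's estimate for $\pi_{m}$ and partial summation can yield a constant independent of the length of the Dirichlet polynomial: blockwise H\"older followed by H\"older over the blocks re-sums to exactly the global estimate (because $\ell^{q}(\ell^{q})=\ell^{q}$), the per-block weight factor $\big(k\log k\big)^{(m-1)^{2}/(2m)}$ is unbounded in $k$ so its $\ell^{2m/(m-1)}$ sum over $k\le\log_{2}N$ still diverges with $N$, and the factor $1/(m-1)!$ from Landau only improves the $m$-dependence, never the $N$-dependence. You correctly name the divergence of $\sum_{\Omega(n)=m}(\log n)^{m}/n$ as the obstacle, but your localization does not remove it. What removes it is replacing the ``flat'' $\ell^{2m/(m+1)}$ coefficient bound by the asymmetric $\ell^{1}(\ell^{2})$ bound of Lemma~\ref{lemma2}: once the largest prime factor is frozen, the weight \emph{is} square-summable over the remaining indices with a bound $K^{m}$ uniform in that prime, and the outer $\ell^{1}$ sum is exactly what Lemma~\ref{lemma2} controls.
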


To prove this we need the following Lemma, a variant of one of the two main ingredients of the proof of the hypercontractivity of the Bohnenblust-Hille inequality
\cite[(14)]{DeFrOrOuSe11} (see \cite[Lemma 5.2]{DeScSe} for a more general setting).
\begin{lemma}  \label{lemma2}
For every $k \in \mathbb{N}$ and every $m$-homogeneous polynomial $P \in \mathcal{P}(^{m} \ell_{\infty}^{k})$
with coefficients $(c_{\mathbf{j}})_{\mathbf{j} \in \mathcal{J}(m,k)}$ (we write $c_{\mathbf{j}}=c_{\mathbf{j}}(P)$) we have
\[
\sum_{j_{m} = 1}^{k} \bigg(\sum_{\substack{\mathbf{j} \in \mathcal{J}(m-1,k) \\ j_{1} \leq \ldots \leq j_{m-1} \leq j_{m}}}
\vert c_{j_{1} \ldots j_{m}} \vert^{2} \bigg)^{\frac{1}{2}} 
 \leq m \sqrt{2}^{m-1} \left(1+\frac{1}{{m-1}} \right)^{m-1} 
\Vert P \Vert \, .
\]
\end{lemma}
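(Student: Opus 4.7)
The plan is to polarize $P$ and reduce, in three steps, to standard estimates: a Parseval-type comparison, the hypercontractive polynomial Khintchine inequality, and a sharp Cauchy/Visser-type bound for the Fr\'echet derivative. Let $L$ be the symmetric $m$-linear form on $(\ell_\infty^k)^m$ associated with $P$, and for each $j \in \{1,\dots,k\}$ introduce the $(m-1)$-homogeneous polynomial
\[
\widehat Q_j(z) := L(z,\dots,z,e_j) = \tfrac{1}{m}\,\partial_j P(z).
\]
Concretely, I would (i) bound the inner $\ell_2$-sum in the statement by $m\,\|\widehat Q_{j_m}\|_{L_2(\mathbb{T}^k)}$, (ii) apply the hypercontractive Khintchine inequality to pass from $L_2$ to $L_1$, and (iii) control the resulting integral via a Visser-type estimate for $\|DP(z)\|_{(\ell_\infty^k)^*}$ on the closed polydisc.

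For step~(i), polarization gives $c_{j_1\ldots j_m} = \tfrac{m!}{\alpha!}L(e_{j_1},\dots,e_{j_m})$, while the monomial coefficient of $\widehat Q_{j_m}$ indexed by $\mathbf{j}=(j_1,\dots,j_{m-1})$ equals $\tfrac{(m-1)!}{\beta!}L(e_{j_1},\dots,e_{j_{m-1}},e_{j_m})$, where $\alpha$ and $\beta$ denote the multi-indices of $(j_1,\dots,j_m)$ and $\mathbf{j}$. For $j_{m-1}\le j_m$ one has $\alpha=\beta+e_{j_m}$, hence $\alpha!=(\beta_{j_m}+1)\,\beta!$, which yields $c_{j_1\ldots j_m}=\tfrac{m}{\beta_{j_m}+1}\,c_{\mathbf{j}}(\widehat Q_{j_m})$ and in particular $|c_{j_1\ldots j_m}|\le m\,|c_{\mathbf{j}}(\widehat Q_{j_m})|$. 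Squaring, summing over $\{\mathbf{j}\in\mathcal{J}(m-1,k):j_{m-1}\le j_m\}$, enlarging to all of $\mathcal{J}(m-1,k)$, and using Parseval on $\mathbb{T}^k$ I would obtain
\[
\Bigl(\sum_{\substack{\mathbf{j}\in\mathcal{J}(m-1,k)\\j_1\le\dots\le j_{m-1}\le j_m}}|c_{j_1\ldots j_m}|^2\Bigr)^{1/2}\le m\,\|\widehat Q_{j_m}\|_{L_2(\mathbb{T}^k)}.
\]

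For steps~(ii) and~(iii), the polynomial Khintchine inequality applied to the $(m-1)$-homogeneous $\widehat Q_{j_m}$ gives $\|\widehat Q_{j_m}\|_{L_2(\mathbb{T}^k)}\le (\sqrt 2)^{m-1}\|\widehat Q_{j_m}\|_{L_1(\mathbb{T}^k)}$. Summing over $j_m$ and swapping sum and integral produces
\[
\sum_{j_m=1}^k\|\widehat Q_{j_m}\|_{L_2(\mathbb{T}^k)}\le(\sqrt 2)^{m-1}\!\int_{\mathbb{T}^k}\sum_{j=1}^k|\widehat Q_j(z)|\,dm(z)=\tfrac{(\sqrt 2)^{m-1}}{m}\!\int_{\mathbb{T}^k}\|DP(z)\|_{(\ell_\infty^k)^*}\,dm(z),
\]
since $\sum_j|\widehat Q_j(z)|=\tfrac1m\sum_j|\partial_j P(z)|=\tfrac1m\|DP(z)\|_{(\ell_\infty^k)^*}$. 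For the pointwise Visser bound I would apply the one-variable Cauchy estimate to $g(\lambda):=P(z+\lambda v)$ with $\|v\|_\infty\le 1$ and $z\in\overline{\mathbb{D}^k}$: from $|g(\lambda)|\le\|P\|(1+|\lambda|)^m$ one gets $|DP(z)v|=|g'(0)|\le\|P\|(1+r)^m/r$ for every $r>0$, and optimizing at $r=1/(m-1)$ yields exactly $\|DP(z)\|_{(\ell_\infty^k)^*}\le m\bigl(1+\tfrac{1}{m-1}\bigr)^{m-1}\|P\|$. Chaining the three bounds produces the constant $m(\sqrt 2)^{m-1}(1+1/(m-1))^{m-1}$ required by the lemma.

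The step that I expect to require the most care is the Parseval-type reduction in~(i): one must track the multinomial factors $\tfrac{m!}{\alpha!}$ and $\tfrac{(m-1)!}{\beta!}$ coming from the symmetrization of $L$, and check that the ordering restriction $j_{m-1}\le j_m$ in the lemma is compatible with the enlargement to the full index set $\mathcal{J}(m-1,k)$ used by Parseval. Steps~(ii) and~(iii) are then routine: the first is a single application of the polynomial Khintchine inequality, and the second is a one-variable Cauchy estimate tuned, via the optimal radius $r=1/(m-1)$, to produce the Visser-type constant $m^m/(m-1)^{m-1}$ exactly.
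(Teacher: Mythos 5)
Your proof is correct and follows essentially the same route as the paper's: the factor $m$ from comparing $\frac{m!}{\alpha!}$ with $\frac{(m-1)!}{\beta!}$, the hypercontractive $L_2$--$L_1$ inequality for the $(m-1)$-homogeneous polynomials $L(z,\dots,z,e_j)$, and the bound $\sup_{z,y\in\mathbb{D}^k}|L(z,\dots,z,y)|\le(1+\tfrac{1}{m-1})^{m-1}\Vert P\Vert$. The only difference is cosmetic: the paper cites Harris's theorem for that last estimate, whereas you rederive it via the one-variable Cauchy estimate optimized at $r=1/(m-1)$, which is the standard proof of that inequality.
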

\begin{proof}
Let $P\in \mathcal{P}(^m \ell_{\infty}^{k})$ be an $m$-homogeneous polynomial and $A$ its associated symmetric $m$-linear mapping. It is well known that the
monomial coefficients $c_{j_{1} \ldots j_{m}}$ of $P$ and the coefficients $a_{i_{1}\ldots i_{m}}:= A(e_{i_{1}},\ldots,e_{i_{m}})$ defining $A$ satisfy
$c_{\mathbf{j}} = \card[\mathbf{j}] a_{\mathbf{j}}$.\\
On the other hand, for each $\mathbf{j} \in \mathcal{J}(m-1,k)$ and $1\leq j \leq k$ we write $(\mathbf{j}, j) = (j_{1}, \ldots , j_{m-1}, j) \in
\mathcal{M}(m,k)$ and we have
\[
 \frac{\card[(\mathbf{j}, j)]}{\card[\mathbf{j}]}
= \frac{m!}{(m-1)!} \cdot \frac{\left\vert \{r \mid   j_{r} = 1\} \right\vert !}{\left\vert \{r \mid  (\mathbf{j}, j)_r = 1\}\right\vert !}
\dotsm\frac{\left\vert \{r \mid  j_r = k\}\right\vert !}{\left\vert \{r \mid  (\mathbf{j}, j)_r = k\}\right\vert !} \leq m \, .
\]
Hence
\begin{multline*}
\sum_{j_{m}=1}^{k} \biggl(\sum_{\substack{\mathbf{j} \in \mathcal{J}(m-1,k)\\ j_{1}\leq\ldots\leq j_{m}}} \vert c_{j_{1} \ldots j_{m}} \vert^{2} \biggr)^{\frac{1}{2}}
=  \sum_{j=1}^{k} \biggl(\sum_{\substack{\mathbf{j} \in \mathcal{J}(m-1,k)\\ j_{1} \leq \ldots \leq j_{m-1} \leq j}} \vert \card[(\mathbf{j}, j)] a_{(\mathbf{j}, j)} \vert^{2} \biggr)^{\frac{1}{2}} \\
\leq \sum_{j=1}^{k} \biggl(\sum_{\mathbf{j} \in \mathcal{J}(m-1,k)} \vert \card[(\mathbf{j}, j)] a_{(\mathbf{j}, j)} \vert^2 \biggr)^{\frac{1}{2}} \\
\leq m \sum_{j=1}^{k} \biggl(\sum_{\mathbf{j} \in \mathcal{J}(m-1,k)} \vert \card[\mathbf{j}] a_{(\mathbf{j}, j)} \vert^{2} \biggr)^{\frac{1}{2}} \,.
\end{multline*}
In the proof of \cite[Theorem~9]{Ba02} (see also \cite[Lemma~3.6]{DeFr11}) it
is proved that for every polynomial $P \in \mathcal{P}(^{m} \mathbb{C}^{k})$ with $P(z)=\sum_{\alpha\in\Lambda(m,k)} c_{\alpha}z^{\alpha}$ we have
\begin{equation} \label{eq:bonami 1}
\Bigg(\sum_{\alpha\in\Lambda(m,k)} \left\vert c_{\alpha} \right\vert^{2} \Bigg)^{\frac{1}{2}}
\leq \sqrt{2}^{m} \int_{\mathbb{T}^{k}} \biggl\vert \sum_{\alpha\in\Lambda(m,k)} c_{\alpha} z^{\alpha} \biggr\vert dm_{k}(z)
\end{equation}
Using this  we get
\begin{align*}
 \sum_{j=1}^{k} & \biggl(\sum_{\mathbf{j} \in \mathcal{J}(m-1,k)} \vert \card[\mathbf{j}] a_{(\mathbf{j}, j)} \vert^{2} \biggr)^{\frac{1}{2}} \\
& \leq \sqrt{2}^{m-1} \sum_{j=1}^{k}
\int_{\mathbb{T}^{k}} \biggl\vert \sum_{\mathbf{j} \in \mathcal{J}(m-1,k)} \card[\mathbf{j}] a_{(\mathbf{j}, j)} z_{j_{1}} \dotsm z_{j_{m-1}} \biggr\vert dm_{k}(z) \\
& = \sqrt{2}^{m-1} \int_{\mathbb{T}^{k}} \sum_{j=1}^{k}
\biggl\vert \sum_{\mathbf{j}\in\mathcal{M}(m-1,k)} a_{(\mathbf{j}, j)} z_{j_{1}} \dotsm z_{j_{m-1}} \biggr\vert dm_{k}(z) \\
& \leq \sqrt{2}^{m-1} \sup_{z \in \mathbb{T}^{k}}  \sum_{j=1}^{k}
\biggl\vert \sum_{\mathbf{j}\in\mathcal{M}(m-1,k)} a_{(\mathbf{j}, j)} z_{j_{1}} \dotsm z_{j_{m-1}} \biggr\vert \\
& = \sqrt{2}^{m-1} \sup_{z \in \mathbb{D}^{k}} \sup_{y \in \mathbb{D}^{k}}
\biggl\vert \sum_{j=1}^{k}  \sum_{\mathbf{j}\in\mathcal{M}(m-1,k)} a_{(\mathbf{j}, j)} z_{j_{1}} \dotsm z_{j_{m-1}} y_{j} \biggr\vert \\
& = \sqrt{2}^{m-1} \sup_{z,y  \in \mathbb{D}^{k}}\vert A(z, \stackrel{m-1}{\ldots}, z , y) \vert
\leq \sqrt{2}^{m-1} \left(1+\frac1{m-1}\right)^{m-1} \Vert P \Vert \, ,
\end{align*}
where the last inequality follows from an estimate of Harris \cite[Theorem 1]{Ha75}. This completes the proof.
\end{proof}
A careful analysis of \cite[Theorem 1.4]{BaCaQu06} using Lemma~\ref{lemma2} gives  Lemma~\ref{queffelec} (see also \cite[Theorem 5.1]{DeScSe} for more details and a
vector valued version). We are now ready to give the proof of the main result of this section.

\begin{proof}[Proof of Theorem~\ref{polinomios}]
In view of \eqref{gadafi} we only have to show $\ell_{\frac{2m}{m-1}, \infty}$ is a subset of $\mon \mathcal{P}(^{m} \ell_{\infty})$. We
begin by observing that for every $(x_{j})_{j}$ with $x_{j} \geq 0$ and all $\alpha \in \mathbb{N}_{0}^{(\mathbb{N})}$ with $\vert \alpha \vert =m$ we have, by
a simple application of the binomial formula,
\begin{equation} \label{observacion}
(x^{\alpha})^{1/m} \leq \sum_{j}  \alpha_{j} x_{j} \,.
\end{equation}
Now, if $z \in \ell_{\frac{2m}{m-1}, \infty}$ we have $\sup_{n} z^{\ast}_{n} n^{\frac{m-1}{2m}} = \Vert z \Vert < \infty$.
Then by the  Prime Number Theorem there is a universal constant $C_{1}>0$ such that, for all $n \in \mathbb{N}$,
\begin{equation*}
z^{\ast}_{n} \leq \Vert z \Vert \frac{1}{n^{\frac{m-1}{2m}}}
\leq  \Vert z \Vert \Big( \frac{\log (n \log n) }{n \log n} \Big)^{\frac{m-1}{2m}}
\leq \Vert z \Vert C_{1} \Big( \frac{\log p_{n} }{p_{n}} \Big)^{\frac{m-1}{2m}} \, .
\end{equation*}
Now for a fixed $\alpha \in \mathbb{N}_{0}^{(\mathbb{N})}$ we have $\textstyle\sum \alpha_{j} \log  p_{j}= \sum \log  p_{j}^{\alpha_{j}}  = \log \prod p_{j}^{\alpha_{j}}
= \log p^{\alpha}$. Applying  \eqref{observacion} with $x_{j} = \log p_{j}$ we get
\begin{multline*}
 z^{\ast \alpha} \leq (\Vert z \Vert C_{1})^{m} \bigg( \Big( \frac{\log p }{p} \Big)^{\frac{m-1}{2m}} \bigg)^{\alpha}
= (\Vert z \Vert C_{1})^{m} \frac{\Big( \big[ (\log  p)^{\alpha} \big]^{1/m} \Big)^{\frac{m-1}{2}} }{(p^{\alpha})^{\frac{m-1}{2m}}}  \\
\leq (\Vert z \Vert C_{1})^{m} \frac{\Big( \sum \alpha_{j} \log  p_{j} \Big)^{\frac{m-1}{2}} }{(p^{\alpha})^{\frac{m-1}{2m}}}
= (\Vert z \Vert C_{1})^{m} \frac{( \log p^{\alpha} )^{\frac{m-1}{2}} }{(p^{\alpha})^{\frac{m-1}{2m}}} \, .
\end{multline*}
Given any polynomial $P \in \mathcal{P} (^{m} \ell_{\infty})$ with coefficients $(c_{\alpha})$ we apply this and we write $a_{n} = c_{\alpha}$ for $n=p^{\alpha}$ to obtain
\begin{align*}
 \sum_{\vert \alpha \vert =m} \vert c_{\alpha} \vert z^{\ast \alpha}
\leq (\Vert z \Vert C_{1})^{m} \sum_{\vert \alpha \vert =m} & \vert a_{p^{\alpha}}  \vert
\frac{( \log p^{\alpha} )^{\frac{m-1}{2}} }{(p^{\alpha})^{\frac{m-1}{2m}}}  \\
& =  (\Vert z \Vert C_{1})^{m} \sum_{\Omega (n) =m} \vert a_{n}  \vert
\frac{( \log n)^{\frac{m-1}{2}} }{n^{\frac{m-1}{2m}}} \, .
\end{align*}
Using Lemma~\ref{queffelec} and the fact that $\mathfrak{B} : \mathcal{P} (^{m} c_{0}) \rightarrow \mathcal{H}_{\infty}^{m}$ is an isometry (\eqref{fund polin})we have
\[
 \sum_{\vert \alpha \vert =m} \vert c_{\alpha} \vert z^{\ast \alpha}
\leq C^{m} \Vert z \Vert^{m} \sup_{t \in \mathbb{R}} \Big\vert \sum_{\Omega(n)=m} a_{n} n^{it} \Big\vert
= C^{m} \Vert z \Vert^{m}  \Vert P_{\vert_{c_{0}}} \Vert
\leq  C^{m} \Vert z \Vert^{m}  \Vert P \Vert \, .
\]
This shows that $z^{\ast} \in \mon \mathcal{P}(^{m} \ell_{\infty})$. Now, $z \in c_{0}$ since $z \in \ell_{\frac{2m}{m-1}, \infty}$, hence there is a permutation $\sigma$
such that $z_{n} = z_{\sigma (n)}^{\ast}$. From this we have $z  \in \mon \mathcal{P}(^{m} \ell_{\infty})$ and \eqref{eq:polinomios} holds.
\end{proof}

\begin{remark} \label{naipes}
Inequality~\eqref{eq:polinomios} can be rewritten as
\begin{equation} \label{rewritten}
\sup_{z \in B_{\ell_{\frac{2m}{m-1}, \infty}}(\mathbb{N})} \sum_{\vert \alpha \vert =m}  \vert c_{\alpha}(P) z^{\alpha} \vert
\leq  C^{m} \Vert P \Vert
\end{equation}
for every $P \in \mathcal{P} (^{m} \ell_{\infty})$. This can be seen as some sort of Bohnenblust-Hille like inequality in the following sense. The
Bohnenblust-Hille inequality says that
$\Vert (c_{\alpha} ) \Vert_{\ell_{\frac{2m}{m+1}}(\mathbb{N}_{0}^{(\mathbb{N})})} \leq C^{m}  \Vert P \Vert$ (note that both the Bohnenblust-Hille inequality
and \eqref{rewritten} are hypercontractive). Fournier showed in \cite{Fo87} that $\Vert (c_{\alpha} ) \Vert_{\ell_{\frac{2m}{m+1},1}(\mathbb{N}_{0}^{(\mathbb{N})})}
\leq C_{m}  \Vert P  \Vert$, which by duality can be refrased as
\[
\sup_{\lambda \in B_{\ell_{\frac{2m}{m-1}, \infty}(\mathbb{N}_{0}^{(\mathbb{N})})}} \sum_{\vert \alpha \vert =m}  \vert c_{\alpha}(P) \vert \lambda_{\alpha}
\leq  C_{m} \Vert P \Vert \, .
\]
\end{remark}

\begin{remark} \label{ele20}
Inequality~\eqref{eq:polinomios} allows us to improve \eqref{11}. Let us define $\ell_{2,0} = \{ z \in \ell_{\infty} \colon \lim_{n} z^{*}_{n} \sqrt{n} = 0 \}$;
then
\begin{equation} \label{2-0}
\ell_{2,0} \cap \mathbb{D}^{\infty} \subseteq \mon H_{\infty} (\mathbb{D}^{\infty}) \, .
\end{equation}
We sketch now a proof of this fact. Since $B_{\ell_{2, \infty}}\subseteq \bigcap_{m\in \mathbb{N}} B_{\ell_{\frac{2m}{m-1}, \infty}}$, by   \eqref{eq:polinomios} and
\cite[Theorem~5.1]{DeMaPr09}  there exists an $r>0$ such that $r B_{\ell_{2, \infty}}
\subseteq \mon H_{\infty} (\mathbb{D}^{\infty})$. Then $\big( \frac{r}{\sqrt{n}} \big)_{n} \in \mon H_{\infty} (\mathbb{D}^{\infty})$; this easily
gives (using \cite[Lemma 2]{DeGaMaPG08}) that $z^{*}$ belongs to $\mon H_{\infty} (\mathbb{D}^{\infty})$ for every $z \in \ell_{2,0} \cap \mathbb{D}^{\infty}$
and \eqref{2-0} follows. In the next section we improve this estimate, but let us note that by the  Prime Number Theorem
the sequence $\big(\frac{1}{p_{n}^{1/2}} \big) \in \ell_{2,0}$ and hence belongs to $\mon H_{\infty} (\mathbb{D}^{\infty})$ while it does not belong to
$\ell_{2}$.
\end{remark}

\section{Holomorphic functions}
Again we have the link \eqref{oje} between holomorphic functions and Dirichlet series \eqref{fund}, and again we know precisely what happens in the Dirichlet side,
while our knowledge on the power series side \eqref{crelle} is weaker. In the previous section we managed to strengthen it a little bit. We show now that we can actually
go further.

\begin{theorem} \label{Leonhard}
Let $z \in \mathbb{D}^{\infty}$.
\begin{enumerate}
 \item If \, $\displaystyle\limsup_{n \to \infty} \frac{1}{\log n} \sum_{j=1}^{n} z^{* 2}_{j} < \frac{1}{2}$, then $z \in  \mon H_{\infty}(\mathbb{D}^{\infty})$. \label{Leo 1}

\item If  $z \in  \mon H_{\infty}(\mathbb{D}^{\infty})$, then  $\displaystyle\limsup_{n \to \infty} \frac{1}{\log n} \sum_{j=1}^{n} z^{* 2}_{j} \leq 1$. \label{Leo 2}
\end{enumerate}
 \end{theorem}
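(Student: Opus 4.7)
The plan is to prove Part (i) via the Bohr correspondence \eqref{fund} together with the sharp $m$-homogeneous estimate in Lemma~\ref{queffelec}, and Part (ii) by a Kahane--Salem--Zygmund construction in the spirit of \cite[Example~4.9]{DeMaPr09} tuned to the finer hypothesis. Both parts reduce, via the rearrangement invariance of $\mon H_{\infty}(\mathbb{D}^{\infty})$ recalled at the end of Section~1, to the case $z=z^*$. For Part~(i) I fix $\rho<1/2$ with $\sum_{j\le n} z^{*2}_j \le \rho\log n$ for all $n\ge N_0$; since $(z^{*2}_j)$ is decreasing this gives $n\,z^{*2}_n \le \rho\log n$, and the Prime Number Theorem (replacing $j$ by $p_j/\log p_j$) converts this into the uniform pointwise bound
\[
z^*_j \;\le\; C\sqrt{\rho}\,\frac{\log p_j}{\sqrt{p_j}} \qquad (j\ge 1)
\]
with a universal constant $C$ (absorbing the small-$j$ piece).

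Given any $f\in H_\infty(\mathbb{D}^\infty)$, I decompose $f=\sum_{m\ge 0} f_m$ into homogeneous components (each with $\|f_m\|_\infty\le\|f\|_\infty$ by the Cauchy integral) and transfer via Bohr to $\sum_{\Omega(n)=m} a_n\,n^{-s}$ with $a_{p^\alpha}=c_\alpha(f_m)$. Applying weighted AM--GM to $\prod_j(\log p_j)^{\alpha_j/m}$ (weights $\alpha_j/m$ summing to $1$) together with the bound on $z^*_j$ gives, for $n=p^\alpha$ and $|\alpha|=m$,
\[
z^{*\alpha} \;\le\; (C\sqrt{\rho})^m\,\frac{\prod_j(\log p_j)^{\alpha_j}}{\sqrt n} \;\le\; (C\sqrt{\rho})^m\,\frac{(\log n/m)^m}{\sqrt n}.
\]
Lemma~\ref{queffelec} controls $\sum_{\Omega(n)=m}|a_n|(\log n)^{(m-1)/2}/n^{(m-1)/(2m)}$ by $K^m\|f_m\|_\infty$, and since the pointwise ratio $(\log n)^{(m+1)/2}/n^{1/(2m)}$ attains its maximum at $\log n=m(m+1)$ with value $(m(m+1))^{(m+1)/2}\,e^{-(m+1)/2}$, a direct comparison yields an $m$-homogeneous estimate of the form
\[
\sum_{\Omega(n)=m}|a_n|\,z^{*\alpha(n)} \;\le\; K_1\,m\,(K_2\sqrt{\rho})^m\,\|f\|_\infty
\]
for absolute constants $K_1,K_2$. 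The series $\sum_m m\,(K_2\sqrt{\rho})^m$ converges iff $K_2\sqrt{\rho}<1$, and the decisive task is to track the constants sharply---or equivalently to invoke \eqref{oje} directly, where $c=1/\sqrt 2$ is known to be optimal---so that this condition becomes \emph{precisely} $\rho<1/2$.

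For Part~(ii) I argue by contradiction: assuming $\limsup(1/\log n)\sum_{j\le n}z^{*2}_j>1$, pick $\eta>1$ and $n_k\to\infty$ with $\sum_{j\le n_k}z^{*2}_j\ge\eta\log n_k$ and build a witness $f\in H_\infty(\mathbb{D}^\infty)$ with divergent monomial series at $z$. For each $k$, with a degree $m_k$ of order $\log n_k$, Kahane--Salem--Zygmund produces signs $\varepsilon_\alpha=\pm 1$ making
\[
P_k(w) \;=\; \sum_{|\alpha|=m_k}\varepsilon_\alpha \sqrt{\binom{m_k}{\alpha}}\,w^\alpha \qquad (w\in\ell_\infty^{n_k})
\]
have $\|P_k\|_\infty$ controlled by $\sqrt{m_k\log n_k}\,\|c\|_2$, while the multinomial identity $(\sum_{j\le n_k}z^{*2}_j)^{m_k}=\sum_{|\alpha|=m_k}\binom{m_k}{\alpha}z^{*2\alpha}$ together with $(\sum\sqrt{a_\alpha})^2\ge\sum a_\alpha$ applied to $a_\alpha=\binom{m_k}{\alpha}z^{*2\alpha}$ delivers the lower bound
\[
\sum_{|\alpha|=m_k}\sqrt{\binom{m_k}{\alpha}}\,z^{*\alpha} \;\ge\; \Bigl(\sum_{j\le n_k}z^{*2}_j\Bigr)^{m_k/2} \;\ge\; (\eta\log n_k)^{m_k/2}.
\]
A careful balancing choice of $m_k$ (of order $\log n_k$) forces the ratio $\sum_\alpha|c_\alpha(P_k)|\,z^{*\alpha}/\|P_k\|_\infty\to\infty$ along $k$, and the packaged function $f:=\sum_k 2^{-k}P_k/\|P_k\|_\infty\in H_\infty(\mathbb{D}^\infty)$ then satisfies $\sum_\alpha|c_\alpha(f)\,z^\alpha|=\infty$, contradicting $z\in\mon H_\infty(\mathbb{D}^\infty)$.

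The principal obstacle is Part~(i): a single pointwise inequality of the form $z^{*\alpha(n)}\le K\,e^{c\sqrt{\log n\log\log n}}/\sqrt n$ with $c<1/\sqrt 2$ (which would close the argument instantly via \eqref{oje}) \emph{fails} in the critical range $m\asymp\log n$, because $m\log(\log n/m)$ can be as large as $\log n/e$. One must therefore really decompose homogeneously, combining the Bohnenblust--Hille-type constants of Lemma~\ref{queffelec} with the weighted AM--GM in such a way that the threshold $\rho<1/2$ emerges exactly, matching the sharp $1/\sqrt 2$ of \eqref{oje}; matching these constants is the technical heart of the argument.
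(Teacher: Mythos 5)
Both halves of your proposal have genuine gaps, and in neither case is the missing piece just a matter of tracking constants.

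\emph{Part (i).} Your first step is already fatal: from the hypothesis you retain only the pointwise consequence $n z_n^{*2}\le \rho\log n$, i.e. $z_j^*\le\sqrt{\rho\log j/j}\asymp\sqrt{\rho}\,\log p_j/\sqrt{p_j}$, and everything downstream uses only this. But the extremal sequence $r_j=\sqrt{\rho\log j/j}$ has $\sum_{j\le n}r_j^2\sim\tfrac{\rho}{2}(\log n)^2$, so $\frac{1}{\log n}\sum_{j\le n}r_j^2\to\infty$ and, by Part (ii), $r\notin\mon H_\infty(\mathbb{D}^\infty)$ for \emph{every} $\rho>0$. Hence no argument whose only input is $z_j^*\le\sqrt{\rho\log j/j}$ can prove Part (i): if your chain of inequalities closed for some admissible $\rho$, it would place $r$ itself in $\mon H_\infty(\mathbb{D}^\infty)$ and contradict Part (ii). (This is exactly the trap flagged in Section~\ref{sec:comments}: $\mathbf{\overline{B}}\subsetneqq\ell_{2,\log}\cap\mathbb{D}^\infty$, and a smallness condition on $\frac{n}{\log n}z_n^{*2}$ alone is noted there not to suffice.) Concretely, your intermediate claim $\sum_{\Omega(N)=m}|a_N|z^{*\alpha(N)}\le K_1m(K_2\sqrt{\rho})^m\|f\|_\infty$ cannot hold for such $z^*$: testing it against the Kahane--Salem--Zygmund polynomial with coefficients $\pm r^\alpha$, for which $\sum_\alpha|c_\alpha|r^\alpha=\sum_\alpha r^{2\alpha}\ge(\sum_{j\le n}r_j^2)^m/m!$ while $\|P\|\le C\sqrt{n\log m}\,(\sum_\alpha r^{2\alpha})^{1/2}$, produces a contradiction for $m\asymp\log n$ no matter how the constants are arranged. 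The paper's proof never discards the averaged hypothesis: Lemma~\ref{equivalences} (applied with $\alpha_n=\log(1/r_n^2)$) converts it into $\sup_n r_n^2\sum_{\alpha\in\mathbb{N}_0^n,\,|\alpha|=m}r^{2\alpha}\le C a_0^{-2m}$ for any $\sqrt{2}<a_0<a$, and the $m$-homogeneous pieces are then handled by splitting off the largest index, Cauchy--Schwarz, and the mixed-norm Lemma~\ref{lemma2}, whose constant $m\sqrt{2}^{\,m-1}(1+\frac{1}{m-1})^{m-1}$ is precisely what makes the resulting series $\sum_m m(\sqrt{2}/a_0)^{m-1}$ converge and produces the threshold $1/2$. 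Neither Lemma~\ref{queffelec} nor \eqref{oje} enters.

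\emph{Part (ii).} The witness polynomials are wrong. With $c_\alpha=\pm\sqrt{\binom{m}{\alpha}}$ one has $\|c\|_2^2=\sum_\alpha\binom{m}{\alpha}=n^m$, so Kahane--Salem--Zygmund gives $\|P_k\|\le C\sqrt{n_k\log m_k}\;n_k^{m_k/2}$ (note also that the bound is $\sqrt{n\log m}$, not $\sqrt{m\log n}$ as you wrote; with the latter the scheme below would yield $\limsup\frac{1}{\log\log n}\sum_{j\le n}z_j^{*2}\le1$, contradicting \eqref{raiz1}). Your lower bound is $(\sum_{j\le n_k}z_j^{*2})^{m_k/2}\le n_k^{m_k/2}$, so the ratio you want to force to infinity is at most $(\sum_{j\le n_k}z_j^{*2}/n_k)^{m_k/2}/(C\sqrt{n_k\log m_k})$, which tends to $0$ for every choice of $m_k$; no balancing can rescue this. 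The repair is the paper's choice $a_\alpha=r^\alpha$, for which $\sum_\alpha|a_\alpha|r^\alpha=\|a\|_2^2$, so that $r\in\mon H_\infty(\mathbb{D}^\infty)$ forces $(\sum_{\alpha\in\Lambda(m,n)}r^{2\alpha})^{1/2}\le C\sqrt{n\log m}$ for all $m,n$; summing $\rho^{2m}$ times these bounds gives $\prod_{j\le n}(1-(\rho r_j)^2)^{-1}\le Kn$, and taking logarithms and invoking Lemma~\ref{equivalences} yields the constant $1$ directly, with no contradiction argument or packaged series $\sum_k 2^{-k}P_k/\|P_k\|$ needed.
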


\begin{remark} \label{c0}
  If $z$ is such that $\limsup_{n} \frac{1}{\log n} \sum_{j=1}^{n} z^{* 2}_{j} < \infty$, then $\sup_{n} \frac{1}{\log n} \sum_{j=1}^{n} z^{* 2}_{j} < \infty$
and, since $n z^{* 2}_{n} \leq   \sum_{j=1}^{n} z^{* 2}_{j}$, this implies
\begin{equation} \label{sup log}
\sup_{n \in \mathbb{N}}   z^{*}_{n} \sqrt{\frac{n}{\log n}} = K < \infty \, .
\end{equation}
Hence $\lim_{n}  z^{*}_{n} =0$ and $z \in c_{0}$. This in particular means that the decreasing rearrangement is just a permutation of $z$.
\end{remark}

\noindent We need the following Lemma.

\begin{lemma} \label{equivalences}
 Let $(r_{j})_{j}$ be non-increasing sequence in $[0,1[$ that converges to $0$ and $(\alpha_{n})_{n}$ a non-decreasing sequence of positive numbers that converges to $\infty$;
then the following are equivalent,
\begin{enumerate}
 \item For every $\rho < 1$ there exists $C_{\rho} > 0$ such that $\sum_{j=1}^{n} - \log \big(1 - (\rho r_{j})^{2} \big) \leq C_{\rho} + \alpha_{n}$ for all $n$.
\item For every $\rho < 1$ there exists $C_{\rho}> 0$ such that $\sum_{j=1}^{n} (\rho r_{j})^{2}  \leq C_{\rho} + \alpha_{n}$ for all $n$.
\item $\limsup_{n} \frac{1}{\alpha_{n}} \sum_{j=1}^{n} r_{j}^{2} \leq 1$.
\end{enumerate}
\end{lemma}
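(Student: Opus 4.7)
The plan is to establish the equivalences via the cycle $(i)\Rightarrow(ii)\Rightarrow(iii)\Rightarrow(ii)\Rightarrow(i)$ using two very easy steps and one trick; equivalently, I will prove $(i)\Leftrightarrow(ii)$ and $(ii)\Leftrightarrow(iii)$.

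The implication $(i)\Rightarrow(ii)$ is immediate from the elementary inequality $-\log(1-x)\geq x$ valid for all $x\in[0,1)$, applied termwise with $x=(\rho r_j)^2<1$. For $(ii)\Leftrightarrow(iii)$, note that $(ii)$ reads $\rho^2\sum_{j=1}^n r_j^2\leq C_\rho+\alpha_n$, so dividing by $\alpha_n$, taking $\limsup$, and then letting $\rho\uparrow 1$ yields $(iii)$. Conversely, given $(iii)$ and any $\rho<1$, pick $\varepsilon>0$ with $\rho^2(1+\varepsilon)\leq 1$; by definition of $\limsup$ there is $N$ such that $\sum_{j=1}^n r_j^2\leq(1+\varepsilon)\alpha_n$ for $n\geq N$, which gives $\sum_{j=1}^n(\rho r_j)^2\leq\alpha_n$. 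Absorbing the finitely many initial terms into a constant $C_\rho$ produces $(ii)$.

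The substantive step is $(ii)\Rightarrow(i)$. Fix $\rho<1$ and choose an auxiliary $\rho'\in(\rho,1)$ (say $\rho'=(1+\rho)/2$). The key observation is that since $r_j\to 0$, for $j$ large enough we have
\[
-\log\bigl(1-(\rho r_j)^2\bigr)\leq (\rho' r_j)^2.
\]
Indeed, writing $u=(\rho r_j)^2$, this inequality becomes $-\log(1-u)\leq(\rho'/\rho)^2 u$, and the factor $(\rho'/\rho)^2>1$ beats the expansion $-\log(1-u)=u+O(u^2)$ for all sufficiently small $u$; by $r_j\to 0$ this is achieved once $j\geq N$ for some $N=N(\rho,\rho')$. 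Then one splits
\[
\sum_{j=1}^n-\log\bigl(1-(\rho r_j)^2\bigr)=\sum_{j=1}^{N-1}-\log\bigl(1-(\rho r_j)^2\bigr)+\sum_{j=N}^n-\log\bigl(1-(\rho r_j)^2\bigr),
\]
bounds the first (finite) sum by a constant $D$ depending on $\rho,\rho'$, and bounds the second by $\sum_{j=1}^n(\rho' r_j)^2\leq C_{\rho'}+\alpha_n$ using $(ii)$ with parameter $\rho'$. Setting $C'_\rho:=D+C_{\rho'}$ yields $(i)$.

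The only subtle point is the splitting trick above: a naive attempt using a uniform bound such as $-\log(1-(\rho r_j)^2)\leq(\rho r_j)^2/(1-\rho^2)$ produces an extra factor $1/(1-\rho^2)$ in front of $\alpha_n$, which ruins the shape $C_\rho+\alpha_n$; the remedy is precisely to exploit the room $\rho<\rho'<1$ together with $r_j\to 0$, so that asymptotically the logarithm is controlled by $(\rho' r_j)^2$ with no extra multiplicative constant on $\alpha_n$. Beyond this, all steps are routine manipulations with $\limsup$ and convergent series.
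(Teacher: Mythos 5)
Your proposal is correct and follows essentially the same route as the paper: the termwise inequality $x\leq-\log(1-x)$ for (i)$\Rightarrow$(ii), the routine $\limsup$ manipulations for (ii)$\Leftrightarrow$(iii), and, for the substantive implication (ii)$\Rightarrow$(i), exactly the paper's trick of introducing an auxiliary $\rho'\in(\rho,1)$ and using $r_j\to 0$ to get $-\log\bigl(1-(\rho r_j)^2\bigr)\leq(\rho' r_j)^2$ for large $j$, then splitting the sum. Your closing remark correctly identifies why the naive uniform bound fails and why the extra room $\rho<\rho'<1$ is the essential point.
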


\begin{proof}
 \textit{(i) $\Rightarrow$ (ii)} follows from the fact that $x < - \log (1-x)$ for all $0<x<1$. On the other hand, to prove \textit{(ii) $\Rightarrow$ (i)}, let us fix $\rho < 1$ and choose some
$\rho < \rho' <1$. Since
\[
 \lim_{j \to \infty} \frac{- \log \big(1 - (\rho r_{j})^{2} \big)}{r_{j}^{2}} = \rho^{2} < \rho'^{2}
\]
there exists $n_{0}$ such that $- \log \big(1 - (\rho r_{j})^{2} \big) < (\rho' r_{j})^{2}$  for every $j \geq n_{0}$. On the one hand, for $n \leq n_{0}$ we have
\[
\sum_{j=1}^{n} - \log \big(1 - (\rho r_{j})^{2} \big) \leq   \sum_{j=1}^{n_{0}} - \log \big(1 - (\rho r_{j})^{2} \big) + C_{\rho'} + \alpha_{n}
\]
On the other hand, if $n >n_{0}$ then we have
\begin{multline*}
 \sum_{j=1}^{n} - \log \big(1 - (\rho r_{j})^{2} \big) \leq \sum_{j=1}^{n_{0}} - \log \big(1 - (\rho r_{j})^{2} \big) + \sum_{j=n_{0}+1}^{n} (\rho' r_{j})^{2} \\
\leq \sum_{j=1}^{n_{0}} - \log \big(1 - (\rho r_{j})^{2} \big) + \sum_{j=1}^{n} (\rho' r_{j})^{2}
\leq \sum_{j=1}^{n_{0}} - \log \big(1 - (\rho r_{j})^{2} \big) +  C_{\rho'} + \alpha_{n} \,.
\end{multline*}
Definig $C_{\rho} = \sum_{j=1}^{n_{0}} - \log \big(1 - (\rho r_{j})^{2} \big) +  C_{\rho'}$, this altogether shows that \textit{(i)} holds.\\
Let us assume now that \textit{(ii)} holds, then for each $\rho < 1$ we have
\[
 \frac{1}{\alpha_{n}} \sum_{j=1}^{n} r_{j}^{2} =  \frac{1}{\rho^{2}} \frac{1}{\alpha_{n}} \sum_{j=1}^{n} (\rho r_{j})^{2}
\leq \frac{1}{\rho^{2}} \frac{1}{\alpha_{n}} ( C_{\rho} + \alpha_{n} )
= \frac{1}{\rho^{2}}  \Big( \frac{C_{\rho}}{\alpha_{n}} + 1 \Big)  \, ,
\]
and this converges to $\frac{1}{\rho^{2}}$. This implies
\[
 \limsup_{n \to \infty} \frac{1}{\alpha_{n}} \sum_{j=1}^{n} r_{j}^{2} \leq 1 \, .
\]
Let us finally show that \textit{(iii)} implies \textit{(ii)}. Fix $\rho < 1$, then
\[
 \limsup_{n \to \infty} \frac{1}{\alpha_{n}} \sum_{j=1}^{n} (\rho r_{j})^{2} \leq \rho^{2} < 1 \, .
\]
Then there is some $n_{0}$ such that $\frac{1}{\alpha_{n}} \sum_{j=1}^{n} (\rho r_{j})^{2} \leq 1$ for every $n \geq n_{0}$. Hence
\[
\sum_{j=1}^{n} (\rho r_{j})^{2} \leq \alpha_{n} \leq  \sum_{j=1}^{n_{0}} (\rho r_{j})^{2} + \alpha_{n}
\]
for every $n \geq n_{0}$. On the other hand, if $n \leq n_{0}$,
\[
 \sum_{j=1}^{n} (\rho r_{j})^{2} \leq \sum_{j=1}^{n_{0}} (\rho r_{j})^{2} \leq  \sum_{j=1}^{n_{0}} (\rho r_{j})^{2} + \alpha_{n} \, .
\]
This finally shows that \textit{(ii)} holds.
\end{proof}

We can now give the \textit{Proof of Theorem~\ref{Leonhard}--(\ref{Leo 2}).}
Let us fix $z \in \mon H_{\infty}(\mathbb{D}^{\infty})$; then $z \in c_{0}$ and without loss of generality we may assume
$z=r=(r_{n})_{n}$ with $0 \leq r_{n} < 1$, non-increasing and  that converges to $0$. By \cite[Lemma~4.1]{DeMaPr09} there exists $C_{1}$ such that
\begin{equation} \label{r mon}
 \sum_{\vert \alpha \vert =m} \vert c_{\alpha}(P) \vert r^{\alpha} \leq C_{1} \Big\Vert \sum_{\vert \alpha \vert =m} c_{\alpha}(P) z^{\alpha}  \Big\Vert
\end{equation}
for every $m$-homogeneous polynomial $P \in \mathcal{P}(^{m} \ell_{\infty}^{n})$. By \cite[Chapter 6, Theorem 4]{Ka85} there
exists  $C_{2} > 0$ so that
for any finite family $(a_{\alpha})_{\alpha}$ of complex numbers with $\alpha \in \mathbb{N}_{0}^{n}$ and $\vert \alpha \vert=m$, there
exists a choice of signs $\varepsilon_{\alpha} = \pm 1$ such that
\begin{equation} \label{Kahane}
\Big\Vert \sum_{\alpha} \varepsilon_{\alpha} a_{\alpha} z^{\alpha} \Big\Vert \leq C_{2} \Big( n \sum_{\alpha} \vert a_{\alpha} \vert^{2} \log m \Big)^{\frac{1}{2}} \, .
\end{equation}
Consider $a_{\alpha} = r^{\alpha}$ for $\alpha \in \Lambda (m,n)$. We
apply  \eqref{r mon} for $c_{\alpha} = \varepsilon_{\alpha} a_{\alpha}$ and then \eqref{Kahane} to get
\begin{multline*}
 \sum_{\alpha \in  \Lambda (m,n)} r^{2 \alpha} = \sum_{\alpha \in  \Lambda (m,n)} \vert \varepsilon_{\alpha} r^{\alpha} \vert r^{\alpha}
\leq C_{1} \Big\Vert \sum_{\alpha \in  \Lambda (m,n)} \varepsilon_{\alpha} a_{\alpha} z^{\alpha} \Big\Vert \\
\leq C_{1} C_{2} \Big( n \sum_{\alpha \in  \Lambda (m,n)} \vert a_{\alpha} \vert^{2} \log m \Big)^{\frac{1}{2}}
= C \sqrt{n \log m}\Big(\sum_{\alpha \in  \Lambda (m,n)} r^{2 \alpha}\Big)^{\frac{1}{2}} \, .
\end{multline*}
This implies
\[
 \Big(\sum_{\substack{\alpha \in \mathbb{N}_{0}^{n} \\ \vert \alpha \vert=m}} r^{2 \alpha}\Big)^{\frac{1}{2}}  \leq C \sqrt{n \log m} \,.
\]
Then, for a given $\rho < 1$ we have
\begin{multline*}
 \prod_{j=1}^{n} \frac{1}{1-(\rho r_{j})^{2}} = \sum_{\alpha \in \mathbb{N}_{0}^{n}} (\rho r)^{2 \alpha}
= 1 + \sum_{m=1}^{\infty} \rho^{2m} \sum_{\substack{\alpha \in \mathbb{N}_{0}^{n} \\ \vert \alpha \vert=m}} r^{2 \alpha} \\
\leq 1 + \sum_{m=1}^{\infty} \rho^{2m} C^{2} n \log m
= 1 + C^{2} n \sum_{m=1}^{\infty} \rho^{2m} \log m \leq K n \, ,
\end{multline*}
where the last inequality holds because the series $\sum_{m} \rho^{2m} \log m$ converges. This implies
\[
 \log \Big(  \prod_{j=1}^{n} \frac{1}{1-(\rho r_{j})^{2}} \Big) \leq \log K + \log n
\]
and
\[
 \sum_{j=1}^{n} - \log \big(1-(\rho r_{j})^{2} \big) \leq C_{\rho} + \log n \, .
\]
Lemma~\ref{equivalences} gives the conclusion. \qed

We give now the \textit{Proof of Theorem~\ref{Leonhard}--(\ref{Leo 1}).} By Remark~\ref{c0} $z$ is a null sequence and its decreasing rearrangement $z^{*}$ can be
written as a sequence $(r_{n})_{n}$ of non-negative numbers decreasing to $0$. We write
\[
\frac{1}{a^{2}}= \limsup_{n \to \infty} \frac{1}{\log n} \sum_{j=1}^{n} r_{j}^{2}  < \frac{1}{2}
\]
Without loss of generality we may assume that $a r_{j} < 1$ for every $j$. Indeed, if that were not the case, take $n_{0}$ such that $r_{n} < \frac{1}{a}$ for
all $n \geq n_{0}$ and define
\[
 \tilde{r}_{n}
= \begin{cases}
   r_{n_{0}} & \text{if } n < n_{0} \\
   r_{n} & \text{if } n_{0} \leq n \, .
  \end{cases}
\]
Then the sequence $(\tilde{r}_{n})_{n}$ is non-increasing and $0 < \tilde{r}_{n} < 1$ for all $n$; moreover $\limsup \frac{1}{\log n} \sum_{j=1}^{n} \tilde{r}_{j}^{2}
=\frac{1}{a^{2}} < \frac{1}{2}$ and $a \tilde{r}_{n} <1$ for all $n$. By  \cite[Lemma 2]{DeGaMaPG08}  $r \in \mon H_{\infty}(\mathbb{D}^{\infty})$ if and
only if $\tilde{r} \in \mon H_{\infty}(\mathbb{D}^{\infty})$. \\
From \eqref{sup log} we have $\log \frac{1}{r_{n}^{2}} \geq \log \frac{n}{\log n} - \log K$. But
$\lim_{n} \frac{\log n }{\log \frac{n}{\log n}} = 1$; hence
\[
 \limsup_{n \to \infty} \frac{1}{\log \frac{1}{r_{n}^{2}}} \sum_{j=1}^{n} r_{j}^{2} \leq
\limsup_{n \to \infty} \frac{1}{\log n} \sum_{j=1}^{n} r_{j}^{2} = \frac{1}{a^{2}} \leq 1
\]
and this gives $\limsup_{n} \frac{1}{\log \frac{1}{r_{n}^{2}}} \sum_{j=1}^{n} (a r_{j})^{2} \leq 1$.
We fix now $\sqrt{2} < a_{0} < a$ and take $\rho_{0} < 1$ with $\rho_{0} a = a_{0}$. By applying
Lemma~\ref{equivalences} to the sequence $(a r_{j})_{j}$ we find some constant $C_{0}>0$ such that, for all $n$
\[
 \sum_{j=1}^{n} - \log \big( 1 - (a_{0} r_{j} )^{2} \big) \leq C_{0} + \log \frac{1}{r_{n}^{2}} \, .
\]
Since all the terms are positive we have for all $n$, $m$
\begin{multline*}
 \sum_{\substack{\alpha \in \mathbb{N}_{0}^{n} \\ \vert \alpha \vert =m}} ( a_{0} r)^{2 \alpha}
= \sum_{\alpha \in \mathbb{N}_{0}^{n}} ( a_{0} r)^{2 \alpha}
=\prod_{j=1}^{n} \frac{1}{1 - (a_{0} r_{j} )^{2}}
= \prod_{j=1}^{n} e^{- \log \big( 1 - (a_{0} r_{j} )^{2} \big)} \\
= e^{\sum_{j=1}^{n} - \log \big( 1 - (a_{0} r_{j} )^{2} \big)}
\leq e^{C_{0}} e^{\log \frac{1}{r_{n}^{2}}} = C \frac{1}{r_{n}^{2}} \,.
\end{multline*}
This implies, for every $m$,
\begin{equation} \label{cota a}
 \sup_{n \in \mathbb{N}} r_{n}^{2}  \sum_{\substack{\alpha \in \mathbb{N}_{0}^{n} \\ \vert \alpha \vert =m}} r^{2 \alpha} \leq \frac{C}{a_{0}^{2m}} \, .
\end{equation}
We take now $f \in H_{\infty} (\mathbb{D}^{\infty})$ with monomial coefficients $(c_{\alpha}(f))_{\alpha}$ and Taylor expansion $f= \sum_{m} P_{m}$. For each of these polynomials
$P_{m}$ we consider $P_{m,n}$, the restriction to $\ell_{\infty}^{n}$.
The Cauchy inequalities give $\Vert P_{m,n} \Vert \leq \Vert P_{m} \Vert \leq \Vert f \Vert$ for all $m,n$.
Then for each $m \geq 1$ we apply \eqref{cota a} with $m-1$ and Lemma~\ref{lemma2} (using \eqref{polin varios}) to get
\begin{align*}
 \sum_{\substack{\alpha \in \mathbb{N}_{0}^{n} \\ \vert \alpha \vert =m}} \vert c_{\alpha}(f) \vert r^{\alpha}
& = \sum_{1 \leq i_{1} \leq \ldots \leq i_{m} \leq n} \vert c_{ i_{1}  \ldots  i_{m}} \vert r_{i_{1}} \cdots r_{i_{m}} \\
& = \sum_{j =1}^{n} r_{j} \sum_{1 \leq i_{1} \leq \ldots \leq i_{m-1} \leq j} \vert c_{ i_{1}  \ldots  i_{m-1} j } \vert r_{i_{1}} \cdots r_{i_{m-1}} \\
& \leq \sum_{j =1}^{n} r_{j} \Big( \sum_{1 \leq i_{1} \leq \ldots \leq i_{m-1} \leq j} \vert c_{ i_{1}  \ldots  i_{m-1} j } \vert^{2} \Big)^{\frac{1}{2}}
 \Big( \sum_{1 \leq i_{1} \leq \ldots \leq i_{m-1} \leq j} r_{i_{1}}^{2} \cdots r_{i_{m-1}}^{2} \Big)^{\frac{1}{2}} \\
&  \leq \sup_{1 \leq j \leq n} r_{j} \Big( \sum_{1 \leq i_{1} \leq \ldots \leq i_{m-1} \leq j} r_{i_{1}}^{2} \cdots r_{i_{m-1}}^{2} \Big)^{\frac{1}{2}}
\sum_{j =1}^{n} \Big( \sum_{1 \leq i_{1} \leq \ldots \leq i_{m-1} \leq j} \vert c_{ i_{1}  \ldots  i_{m-1} j } \vert^{2} \Big)^{\frac{1}{2}} \\
& = \sup_{1 \leq j \leq n} r_{j} \Big( \sum_{\substack{\alpha \in \mathbb{N}_{0}^{j} \\ \vert \alpha \vert =m-1}} r^{2 \alpha}  \Big)^{\frac{1}{2}}
\sum_{j =1}^{n} \Big( \sum_{1 \leq i_{1} \leq \ldots \leq i_{m-1} \leq j} \vert c_{ i_{1}  \ldots  i_{m-1} j } \vert^{2} \Big)^{\frac{1}{2}} \\
& \leq \frac{\sqrt{C}}{a_{0}^{m-1}}  m (\sqrt{2})^{m-1} \big( 1 - \frac{1}{m-1} \big)^{m-1} \Vert P_{m,n} \Vert
\leq \tilde{C} \Big( \frac{\sqrt{2}}{a_{0}} \Big)^{m-1} m \Vert f \Vert \,.
\end{align*}
Now
\[
 \sum_{\alpha \in \mathbb{N}_{0}^{n}}  \vert c_{\alpha} (f) \vert r^{\alpha}
= \sum_{m=0}^{\infty} \sum_{\substack{\alpha \in \mathbb{N}_{0}^{n} \\ \vert \alpha \vert =m}} \vert c_{\alpha}(f) \vert r^{\alpha}
\leq \vert c_{0} \vert + \sum_{m=1}^{\infty} \tilde{C} \Big( \frac{\sqrt{2}}{a_{0}} \Big)^{m-1} m \Vert f \Vert \leq K \Vert f \Vert \,.
\]
Since this holds for every $n$ we finally have
\[
 \sum_{\alpha \in \mathbb{N}_{0}^{(\mathbb{N})}}  \vert c_{\alpha}(f) \vert r^{\alpha}
\leq K \Vert f \Vert \, .
\]
This implies $r \in \mon H_{\infty}(\mathbb{D}^{\infty})$ and completes the proof. \qed

\subsection{Dismissing  candidates} \label{sec:comments}

Our aim was to describe $\mon H_{\infty} (\mathbb{D}^{\infty})$ in terms of a sequence space; in other words, to find a sequence space $X$ such that
$X \cap \mathbb{D}^{\infty} = \mon H_{\infty} (\mathbb{D}^{\infty})$. The first natural candidate to do that job was $\ell_{2}$ but, as we already said in the introduction,
\cite[Theorem~1.1(b)]{BaCaQu06} implies that the sequence $(p_{n}^{-1/2})_{n}$ belongs to the set $\mon H_{\infty} (\mathbb{D}^{\infty})$ although it is not in $\ell_{2}$.
Then three other natural candidates are the spaces $\ell_{2,0}$ (already defined in Remark~\ref{ele20}), the Lorentz space $\ell_{2, \infty}$ and the space defined by
\[
 \ell_{2, \log} = \Big\{ z \in \ell_{\infty} \colon \exists c \, \forall n \, ;  z^{*}_{n} \leq c \sqrt{\textstyle\frac{\log n}{n}} \, , \, \,\forall n \Big\}  \, .
\]
Theorem~\ref{Leonhard} shows that neither $\ell_{2,0}$ nor $\ell_{2, \log}$ are the proper spaces, since we have
\begin{equation} \label{candidatos}
  \ell_{2,0}\cap\mathbb{D}^{\infty} \subsetneqq \frac{1}{\sqrt{2}} \mathbf{B} \subseteq \mon H_{\infty} (\mathbb{D}^{\infty})
\subseteq \mathbf{\bar{B}} \subsetneqq \ell_{2, \log}\cap \mathbb{D}^{\infty} \, .
\end{equation}
Before we proceed, let us note that since $\limsup_{n} \frac{1}{\log n} \sum_{j=1}^{n} \frac{1}{j} = 1$ we have that
\begin{align}
& \Big( \frac{c}{\sqrt{n}} \Big)_{n \in \mathbb{N}} \in \mon  H_{\infty} (\mathbb{D}^{\infty})  \text{ for all } c < \frac{1}{\sqrt{2}} \text{ and }  \label{raiz1} \\
& \Big( \frac{c}{\sqrt{n}} \Big)_{n \geq c^{2}} \not\in \mon  H_{\infty} (\mathbb{D}^{\infty})  \text{ for all } c > 1 \, .  \label{raiz2}
\end{align}
Now, \eqref{raiz1} immediately gives $\ell_{2,0}\cap\mathbb{D}^{\infty} \subsetneqq \frac{1}{\sqrt{2}} \mathbf{B}$.
Let us point out that this shows that Theorem~\ref{Leonhard} implies Remark~\ref{ele20}, giving more information.\\
The last inclusion in \eqref{candidatos} follows from \eqref{sup log}.
On the other hand,
\[
 \limsup_{n \to \infty} \frac{1}{\log n} \sum_{j=1}^{n} \Big( \sqrt{\frac{\log j}{j}} \Big)^{2}
\geq \limsup_{n \to \infty} \frac{1}{\log n} \sum_{j=1}^{n} \frac{\log 3}{j}
= \log 3 > 1
\]
gives $\Big( \sqrt{\frac{\log n}{n}} \Big)_{n} \not\in \mathbf{\bar{B}}$ and shows that this inclusion is also strict.\\
In certain steps of the proof of Theorem~\ref{Leonhard}-\textit{(\ref{Leo 1})} we have used the fact that, being $(r_{n})_{n}$ decreasing, $n r_{n}^{2} \leq \sum_{j=1}^{n}  r_{j}^{2}$.
One may wonder if then the condition $\limsup_{n} n z_{n}^{\ast 2} < 1/2$ is enough, but the previous comment shows that this is not the case.\\

Again, $\ell_{2, \infty}$ is not the right candidate; the situation now is slightly more complicated. First of all by \eqref{raiz1} we have
\[
\frac{1}{\sqrt{2}}B_{\ell_{2,\infty}}\subseteq \mon H_{\infty} (\mathbb{D}^{\infty}) \, .
 \]
Again, Theorem~\ref{Leonhard} is stronger than Remark~\ref{ele20}, since it
gives a concrete value to the $r$ obtained there. But more can be said;
\eqref{raiz2} shows that there are sequences  in $\ell_{2,\infty}\cap \mathbb{D}^\infty$ that do not belong to  $\mon H_{\infty} (\mathbb{D}^{\infty})$.
On the other hand there are sequences in $\mon H_{\infty} (\mathbb{D}^{\infty})$ that do not belong to $\ell_{2,\infty}$.  To check this claim, we take an strictly increasing
sequence of non-negative integers $(n_{k})_{k}$ with $n_{1}>1$,  satisfying that the sequence $(\frac{k+1}{n_{k}})_{k}$ is strictly decreasing and
\[
\sum_{k=1}^\infty \frac{k+1}{n_k}<\frac{1}{2} \, ;
\]
(take for example $n_k=a^{k^2(k+1)}$ for $a  \in \mathbb{N}$ big enough). Now we define
\[
 r_{j} =
\begin{cases}
 \sqrt{\frac{1}{n_{1}}} & 1\leq j \leq n_{1} \\
 \sqrt{\frac{k+1}{n_{k+1}}} & n_k <j\leq n_{k+1}, \ \ k=1,2,\ldots.
\end{cases}
\]
The sequence $(r_n)$ is  decreasing to 0. Clearly, $n_kr_{n_k}^2=k$ for all $k$. Thus $(r_n)$ does not belong to $\ell_{2,\infty}$. But  for $n>n_{1}$, if $n_k<n\leq n_{k+1}$, then
\begin{multline*}
 \frac{1}{\log n} \sum_{j=1}^{n} r_{j}^{2}=
 \frac{1}{\log n}\big( \sum_{j=1}^{n_{1}} \frac{1}{n_{1}}+\sum_{h=1}^{k-1}\sum_{j=n_h+1}^{n_{h+1}}r_j^2+
\sum_{j=n_k+1}^{n}r_j^2\big)\\
\leq  \frac{1}{\log n}\big( 1+\sum_{h=1}^{k-1}\frac{n_{h+1}-n_h}{n_{h+1}}(h+1)+\frac{n_{k+1}-n_k}{n_{k+1}}(k+1)\big)\\
\leq \frac{1}{\log n_{1}}+\sum_{h=1}^{k-1}\frac{h+1}{\log n_{h+1}}+\frac{k+1}{\log n_k}
< \sum_{h=1}^\infty \frac{h+1}{n_h}+\frac{k+1}{\log n_k}.
\end{multline*}
Hence $\limsup_{n} \frac{1}{\log n} \sum_{j=1}^{n} r_j^2 < \frac{1}{2}$, and therefore $(r_{n})_{n} \in \mon H_{\infty} (\mathbb{D}^{\infty})$.\\

In fact, Theorem~\ref{Leonhard}, through an argument like in \eqref{raiz1} and \eqref{raiz2}, shows that there is no Banach sequence space $X$ such that
$\mon H_{\infty} (\mathbb{D}^{\infty}) = X \cap \mathbb{D}^{\infty}$.

\section{Hardy spaces}
We draw now our attention to functions on $\mathbb{T}^{\infty}$, the infinite dimensional polytorus. We recall that $m$ denotes the product of the normalized Lebesgue measure
on $\mathbb{T}$. Given a function   $f  \in L_{p} (\mathbb{T}^{\infty})$, its Fourier coefficients $(\hat{f}(\alpha))_{\alpha \in \mathbb{Z}^{(\mathbb{N})}}$ are defined by
$\hat{f}(\alpha) = \int_{\mathbb{T}^{\infty}} f(w) w^{- \alpha} dm(w)= \langle f, w^{ \alpha} \rangle$ where $w^{\alpha}=w_{1}^{\alpha_{1}}\ldots w_n^{\alpha_n}$ if $\alpha=(\alpha_{1}\ldots \alpha_n, 0,\ldots)$ for $w \in \mathbb{T}^{\infty}$, and  the bracket $\langle \cdot , \cdot \rangle$ refers to the duality between $L_{p} (\mathbb{T}^{\infty})$ and $L_{q} (\mathbb{T}^{\infty})$ for $1/p + 1/q=1$.
With this, the Hardy spaces are defined as
\[
H_{p}(\mathbb{T}^{\infty}) = \Big\{ f \in  L_{p} (\mathbb{T}^{\infty}) \colon \hat{f}(\alpha) = 0 \,  , \, \,\, \,
\forall \alpha \in \mathbb{Z}^{(\mathbb{N})} \setminus \mathbb{N}_{0}^{(\mathbb{N})} \Big\}\, .
\]
We define the set
\begin{equation} \label{mon Hardy}
 \mon H_{p}(\mathbb{T}^{\infty}) = \Big\{ z \in \mathbb{D}^{\infty} \colon \sum_{\alpha} \vert \hat{f}(\alpha) z^{\alpha} \vert < \infty
\text{ for all }  f \in  H_{p}(\mathbb{T}^{\infty}) \Big\} \,.
\end{equation}
Our aim in this section is to determine these sets. As we did for holomorphic functions, we approach first the $m$-homogeneous case.

\subsection{The homogeneous case}

We consider, for each $m$, the following closed subspace
\begin{equation} \label{Hpm}
 H_{p}^{m}(\mathbb{T}^{\infty}) = \Big\{ f \in H_{p} (\mathbb{T}^{\infty}) \colon \hat{f} (\alpha) \neq 0 \, \Rightarrow \, \vert \alpha \vert = m \Big\} \,.
\end{equation}
of $L_{p} (\mathbb{T}^{\infty})$. By \cite[Section~9]{CoGa86} this is the completion of the $m$-homogeneous trigonometric polynomials (functions on
$\mathbb{T}^{\infty}$ that are finite sums of the form $\sum_{\vert \alpha \vert =m} c_{\alpha} w^{\alpha}$).  A sort of Khintchine-type inequality from \cite[9.1 Theorem]{CoGa86} shows
that, for $1 \leq p < \infty$, all  $\Vert \cdot \Vert_{p}$-norms are equivalent for the $m$-homogeneous trigonometric polynomials and then $H_{q}^{m}(\mathbb{T}^{\infty}) =
 H_{p}^{m}(\mathbb{T}^{\infty})$ for all $1 \leq p,q < \infty$ and all $m$ with equivalent norms.\\
Our aim now is to describe the set $\mon H_{p}^{m}(\mathbb{T}^{\infty})$, defined analogously to \eqref{mon Hardy}. We deal with
two sepparate situations: $p=\infty$ and $1 \leq p < \infty$. The first case will follow from Theorem~\ref{polinomios}, after showing that
$H_{\infty}^{m}(\mathbb{T}^{\infty})$ can be identified with $\mathcal{P}(^{m} c_{0})$. The basic idea here is, given a polynomial on $c_{0}$, extend
it to $\ell_{\infty}$ and then restrict it to $\mathbb{T}^{\infty}$. Let us very briefly recall how $m$-homogeneous polynomials on a Banach space $X$ can be
extended to its bidual (see \cite[Section~6]{Fl99} or  \cite[Proposition~1.53]{Di99}). First of all, every $m$-linear mapping $A: X \times \cdots \times X \to \mathbb{C}$
a unique extension (called the Arens extension) $\tilde{A} : X^{**} \times \cdots \times X^{**} \to \mathbb{C}$ such that for all $j=1, \ldots , n$, all
$x_{k} \in X$ and $z_{k} \in X^{**}$, the mapping that to $z \in X^{**}$ associates $\tilde{A}(x_{1}, \ldots , x_{j-1}, z , z_{j+1}, \ldots , z_{m})$ is weak$^{*}$-continuous.
Now, given $P \in \mathcal{P}(^{m} X)$, we take its associated symmetric $m$-linear form $A$ and define its Aron--Berner extension $\tilde{P} \in  \mathcal{P}(^{m} X^{**})$
by $\tilde{P}(z) = \tilde{A}(z, \ldots , z)$. By \cite[Theorem~3]{DaGa89} we have
\begin{equation} \label{DavieGamelin}
  \sup_{x \in B_{X}} \vert P(x) \vert = \sup_{z \in B_{X^{**}}} \vert P(z) \vert \, .
\end{equation}
Hence, the operator $AB:\mathcal{P}(^{m} X) \to \mathcal{P}(^{m} X^{**})$ defined by $AB(P) = \tilde{P}$ is a linear isometry.

\begin{proposition}\label{polco}
The mapping $\psi : \mathcal{P}(^{m} c_{0})  \to H_{\infty}^{m}(\mathbb{T}^{\infty})$ given by $$\psi (P)(w)= AB(P)(w)$$ for $w\in \mathbb{T}^{\infty}$ is
and $P\in \mathcal{P}(^{m} c_{0})$ a surjective isometry.
\end{proposition}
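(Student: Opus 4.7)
The plan is to restrict the Aron--Berner extension $\tilde P = AB(P)\in\mathcal{P}(^m\ell_\infty)$ to $\mathbb{T}^\infty\subset \bar B_{\ell_\infty}$, verify that the resulting function lies in $H_\infty^m(\mathbb{T}^\infty)$, compute its $L_\infty$-norm, and invert the construction. The central technical step is the pointwise approximation $\tilde P^{(N)}(w)\to \tilde P(w)$ for every $w\in\mathbb{T}^\infty$, where $\tilde P^{(N)}(z):=\tilde P(z_1,\dots,z_N,0,\dots)$ is an $m$-homogeneous trigonometric polynomial on $\mathbb{T}^\infty$ whose coefficients are exactly $c_\alpha(P)$ for $|\alpha|=m$, $\operatorname{supp}\alpha\subseteq\{1,\dots,N\}$. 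Since $c_0$ has the Dunford--Pettis property, every $P\in\mathcal{P}(^m c_0)$ is weakly sequentially continuous, and hence $\tilde P$ is weak-$*$ sequentially continuous on bounded subsets of $\ell_\infty=c_0^{**}$. Because $w^{(N)}:=(w_1,\dots,w_N,0,\dots)\to w$ weak-$*$ in $\ell_\infty$ (test against $\ell_1$), the desired pointwise convergence follows.

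From this, $\psi(P)=\tilde P|_{\mathbb T^\infty}$ is a Borel measurable function bounded pointwise by $\|\tilde P\|=\|P\|$ (the latter by \eqref{DavieGamelin}), so $\psi(P)\in L_\infty(\mathbb{T}^\infty)$ with $\|\psi(P)\|_\infty\le\|P\|$. Dominated convergence gives $\widehat{\psi(P)}(\beta) = \lim_N \widehat{\tilde P^{(N)}}(\beta)$, which equals $c_\beta(P)$ when $\beta\in\mathbb{N}_0^{(\mathbb{N})}$ and $|\beta|=m$, and vanishes otherwise; thus $\psi(P)\in H_\infty^m(\mathbb{T}^\infty)$. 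The reverse inequality comes from the averaging identity
\[
\tilde P^{(N)}(w) = \int_{\mathbb{T}^\infty} \tilde P(w_1,\dots,w_N,u_{N+1},u_{N+2},\dots)\,dm(u),
\]
obtained by applying the same pointwise convergence (now in the integrand) together with orthogonality of characters on each $\mathbb T^M$. This yields $|\tilde P^{(N)}(w)|\le\|\psi(P)\|_\infty$ for every $w$, and combining with the maximum modulus principle on $\mathbb D^N$ gives $\|P^{(N)}\|_{\mathcal{P}(^m\ell_\infty^N)}\le\|\psi(P)\|_\infty$. Since finitely supported sequences are dense in $c_0$, $\|P\|=\sup_N\|P^{(N)}\|_{\mathcal{P}(^m\ell_\infty^N)}$, and the isometry $\|\psi(P)\|_\infty=\|P\|$ is proved.

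For surjectivity I reverse the construction. Given $f\in H_\infty^m(\mathbb{T}^\infty)$ define the truncated polynomials $Q_N(z):=\sum_{|\alpha|=m,\,\operatorname{supp}\alpha\subseteq\{1,\dots,N\}}\hat f(\alpha)\,z^\alpha$. Applying the averaging identity to $f$ directly shows that $w\mapsto Q_N(w_1,\dots,w_N,0,\dots)$ is the conditional expectation of $f$ onto the $\sigma$-algebra generated by the first $N$ coordinates, whence $\|Q_N\|_{\mathcal{P}(^m\ell_\infty^N)}\le\|f\|_\infty$ and $(Q_N)$ is a uniformly bounded sequence in $\mathcal{P}(^m c_0)$. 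On finitely supported sequences the $Q_N(z)$ are eventually constant, and the standard Lipschitz estimate $|Q_N(z)-Q_N(z')|\le C_m\|f\|_\infty\|z-z'\|$ on $B_{c_0}$ (valid for any $m$-homogeneous polynomial) together with density of finitely supported vectors shows that $Q_N$ converges pointwise on $c_0$ to some $Q\in\mathcal{P}(^m c_0)$ with monomial coefficients $c_\alpha(Q)=\hat f(\alpha)$. Then $\psi(Q)$ and $f$ have the same Fourier coefficients and therefore coincide as elements of $H_\infty^m(\mathbb{T}^\infty)$.

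The main obstacle is establishing $\tilde P^{(N)}(w)\to\tilde P(w)$ cleanly on all of $\mathbb{T}^\infty$: the Aron--Berner extension is only separately weak-$*$ continuous on bounded sets in general, so the diagonal convergence $\tilde A(w^{(N)},\dots,w^{(N)})\to \tilde A(w,\dots,w)$ is not automatic. The Dunford--Pettis property of $c_0$ is precisely the input that upgrades separate to joint weak-$*$ sequential continuity on bounded sets and makes the whole argument go through; everything else is then either a dominated convergence argument or the maximum modulus principle in finitely many variables.
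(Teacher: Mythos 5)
Your argument is correct in substance but takes a genuinely different, more hands-on route than the paper's. The paper's proof is a pure density argument: by the very definition of the Aron--Berner extension the monomials $x^{\alpha}$ on $c_{0}$ are carried to the monomials $w^{\alpha}$, so the finite-type $m$-homogeneous polynomials are mapped bijectively and isometrically onto the $m$-homogeneous trigonometric polynomials (maximum modulus in finitely many variables); since the former are dense in $\mathcal{P}(^{m}c_{0})$ \cite[Propositions~1.59 and 2.8]{Di99} and the latter are dense in $H_{\infty}^{m}(\mathbb{T}^{\infty})$ \cite[Section~9]{CoGa86}, the surjective isometry follows at once. What your route buys is independence from the Cole--Gamelin completeness statement: you reconstruct the preimage of an arbitrary $f\in H_{\infty}^{m}(\mathbb{T}^{\infty})$ directly from the conditional expectations $E_{N}f$ together with a uniform Lipschitz bound, and you obtain the lower norm estimate from the averaging identity rather than from density in the codomain. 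The price is length and two spots needing more care: in the averaging identity the bound $\vert\tilde{P}(w_{1},\dots,w_{N},u)\vert\le\Vert\psi(P)\Vert_{\infty}$ holds only for almost every $(w,u)$, so you first get $\vert\tilde{P}^{(N)}\vert\le\Vert\psi(P)\Vert_{\infty}$ a.e.\ on $\mathbb{T}^{N}$ and then everywhere by continuity; and the pointwise limit $Q$ of the $Q_{N}$ must be checked to be a bounded $m$-homogeneous polynomial (pass to the symmetric multilinear forms by polarization, which converge pointwise and are uniformly bounded).

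The one step I would not accept as written is your justification of $\tilde{P}(w^{(N)})\to\tilde{P}(w)$. The Dunford--Pettis property of $c_{0}$ does give weak sequential continuity of $P$ on $c_{0}$, but it is not a general theorem that the Aron--Berner extension of a weakly sequentially continuous polynomial is weak-$*$ sequentially continuous on bounded sets of the bidual; the extension is built from iterated limits and is a priori only separately weak-$*$ continuous in one variable. The correct (and standard) input for $c_{0}$ is that every $P\in\mathcal{P}(^{m}c_{0})$ is a uniform limit on the unit ball of finite-type polynomials --- exactly the density result the paper cites --- whence $\tilde{P}$ is a uniform limit on the closed unit ball of $\ell_{\infty}$ of finite sums of monomials $z^{\alpha}$, each of which is weak-$*$ continuous there, and the joint weak-$*$ continuity of $\tilde{P}$ on bounded sets follows. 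With that substitution your proof is complete; note that this substitution is precisely the engine of the paper's shorter argument.
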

\begin{proof}
Let us note first that, by the very definition of the Aron--Berner extension, for each $\alpha \in \mathbb{N}_{0}^{(\mathbb{N})}$, the monomial
$x \in c_{0} \mapsto x^{\alpha}$ is extended to the monomial $z \in \ell_{\infty} \mapsto z^{\alpha}$. Then the set of finite sums of the type
$\sum_{\vert \alpha \vert} c_{\alpha} x^{\alpha}$ is bijectively and isometrically mapped onto the set of $m$-homogeneous trigonometric polynomials. By
\cite[Propositions~1.59 and 2.8]{Di99} the monomials on $c_{0}$ with $\vert \alpha \vert=m$ generate a dense subspace  of $\mathcal{P}(^m c_{0})$. On the other hand,
by \cite[Section~9]{CoGa86} the trigonometric polynomials are dense in $H_{\infty}^{m} (\mathbb{T}^{\infty})$. This gives the result.
\end{proof}

\noindent To deal with the case $1 \leq p < \infty$ we need the following lemma.

\begin{lemma} \label{inclusion}
 $\mon H_{p}^{m} (\mathbb{T}^{\infty}) \subseteq \mon H_{p}^{m-1} (\mathbb{T}^{\infty})$.
\end{lemma}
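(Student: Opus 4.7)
The plan is to push an $(m-1)$-homogeneous Hardy function up to an $m$-homogeneous one by multiplication with a single coordinate character $w_{j}$, thereby reducing absolute monomial convergence at level $m-1$ to the hypothesis at level $m$. Since the unit circle has modulus one, multiplication by $w_{j}$ is a pointwise isometry on $L_{p}(\mathbb{T}^{\infty})$, so it maps Hardy functions to Hardy functions without cost in norm. If $g \in H_{p}^{m-1}(\mathbb{T}^{\infty})$, then $f(w) := w_{j}\, g(w)$ lies in $L_{p}(\mathbb{T}^{\infty})$, and a direct computation of Fourier coefficients gives
\[
\hat{f}(\alpha) = \int_{\mathbb{T}^{\infty}} w_{j}\, g(w)\, w^{-\alpha}\, dm(w) = \hat{g}(\alpha - e_{j})
\]
whenever $\alpha_{j} \geq 1$, and $\hat{f}(\alpha) = 0$ otherwise. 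Hence $\hat{f}$ is supported on multi-indices of length $m$, so $f \in H_{p}^{m}(\mathbb{T}^{\infty})$.

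Next I would fix $z \in \mon H_{p}^{m}(\mathbb{T}^{\infty})$ and dispose of the trivial case $z = 0$: for $m-1 \geq 1$, every $g \in H_{p}^{m-1}(\mathbb{T}^{\infty})$ has $\hat{g}(0) = 0$, so the series trivially converges; the case $m=1$ (if considered) is immediate since $H_{p}^{0}$ consists of constants. So assume $z \neq 0$ and pick an index $j$ with $z_{j} \neq 0$. For any $g \in H_{p}^{m-1}(\mathbb{T}^{\infty})$, applying the construction above to this specific $j$ yields $f = w_{j} g \in H_{p}^{m}(\mathbb{T}^{\infty})$, and by the Fourier computation
\[
\sum_{\alpha} |\hat{f}(\alpha)\, z^{\alpha}| = \sum_{\alpha_{j} \geq 1} |\hat{g}(\alpha - e_{j})|\, |z^{\alpha}| = |z_{j}| \sum_{\beta} |\hat{g}(\beta)\, z^{\beta}|,
\]
the reindexing $\beta := \alpha - e_{j}$ being legal because $|\beta| = m-1$ and $\beta \in \mathbb{N}_{0}^{(\mathbb{N})}$.

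By hypothesis $z \in \mon H_{p}^{m}(\mathbb{T}^{\infty})$, so the left-hand side is finite. Since $|z_{j}| > 0$, dividing through shows $\sum_{\beta} |\hat{g}(\beta)\, z^{\beta}| < \infty$. As $g \in H_{p}^{m-1}(\mathbb{T}^{\infty})$ was arbitrary, this exhibits $z \in \mon H_{p}^{m-1}(\mathbb{T}^{\infty})$ and gives the desired inclusion.

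There is no real obstacle here; the argument is essentially bookkeeping. The only subtle point is verifying that $w_{j} g$ indeed lies in $H_{p}^{m}(\mathbb{T}^{\infty})$ (as opposed to merely $L_{p}$), which is why the Fourier-coefficient identification above needs to be stated cleanly. It is worth noting that the proof uses no particular feature of $p$, so the lemma holds uniformly for all $1 \le p \le \infty$ once one has the Fourier machinery on $\mathbb{T}^{\infty}$ (in the $p = \infty$ case one can alternatively invoke Proposition~\ref{polco} and reduce to polynomials on $c_{0}$).
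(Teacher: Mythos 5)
Your proof is correct and follows essentially the same route as the paper: fix a coordinate $j$ with $z_{j}\neq 0$, multiply an $(m-1)$-homogeneous Hardy function by $w_{j}$ to land in $H_{p}^{m}(\mathbb{T}^{\infty})$, identify the shifted Fourier coefficients, and divide the resulting absolutely convergent sum by $|z_{j}|$. The only (harmless) variation is that you verify $w_{j}g\in H_{p}^{m}(\mathbb{T}^{\infty})$ directly from the Fourier-support characterization, whereas the paper does it by approximating $g$ with $(m-1)$-homogeneous trigonometric polynomials; both are valid, and you additionally dispose of the trivial case $z=0$ explicitly.
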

\begin{proof}
 Let $0 \neq z \in \mon H_{p}^{m} (\mathbb{T}^{\infty})$ and $f \in \mon H_{p}^{m-1} (\mathbb{T}^{\infty})$. We choose
$z_{i_{0}} \neq 0$ and define $\tilde{f} (w) = w_{i_{0}} f(w)$. Let us see that $\tilde{f} \in H_{p}^{m} (\mathbb{T}^{\infty})$; indeed, take a sequence
$(f_{n})_{n}$ of $(m-1)$-homogeneous trigonometric polynomials that converges in the space $L_{p}(\mathbb{T}^{\infty})$ to $f$. Each $f_{n}$ is a finite sum of the type
$\sum_{\vert \alpha \vert = m-1} c_{\alpha}^{(n)} w^{\alpha}$. We define for $w \in \mathbb{T}^{\infty}$
\[
 \tilde{f}_{n} (w) = w_{i_{0}} f_{n}(w) = \sum_{\vert \alpha \vert = m-1} c_{\alpha}^{(n)} w_{1}^{\alpha_{1}} \cdots w_{i_{0}}^{\alpha_{i_{0}}+1} \cdots w_{k}^{\alpha_{k}} \, .
\]
Clearly $\tilde{f}_{n}$ is an $m$-homogeneous trigonometric polynomial. Moreover
\begin{multline*}
 \Big( \int_{\mathbb{T}^{\infty}} \vert w_{i_{0}} f_{n}(w) - w_{i_{0}} f(w) \vert^{p}  d m(w) \Big)^{\frac{1}{p}}
= \Big( \int_{\mathbb{T}^{\infty}} \vert w_{i_{0}}\vert^{p}  \vert f_{n}(w) - f(w) \vert^{p}  dm(w) \Big)^{\frac{1}{p}}  \\
\leq \Big( \int_{\mathbb{T}^{\infty}}   \vert f_{n}(w) - f(w) \vert^{p}  d m(w) \Big)^{\frac{1}{p}} \, .
\end{multline*}
The last term converges to $0$, hence $(\tilde{f}_{n})_{n}$ converges in $L_{p}(\mathbb{T}^{\infty})$ to$\tilde{f}$ and $\tilde{f} \in H_{p}^{m} (\mathbb{T}^{\infty})$. We compute
now the Fourier coefficients:
\begin{multline*}
 \hat{\tilde{f}} (\alpha) = \int_{\mathbb{T}^{\infty}} \tilde{f}(w) w^{- \alpha} dm(w)
= \int_{\mathbb{T}^{\infty}} w_{i_{0}} f(w) w^{- \alpha} dm(w) \\
= \int_{\mathbb{T}^{\infty}}  f(w) w_{1}^{-\alpha_{1}} \cdots w_{i_{0}}^{-\alpha_{i_{0}}+1} \cdots w_{n}^{-\alpha_{n}} dm(w)  \\
= \int_{\mathbb{T}^{\infty}}  f(w) w_{1}^{-\alpha_{1}} \cdots w_{i_{0}}^{-(\alpha_{i_{0}}-1)} \cdots w_{n}^{-\alpha_{n}} dm(w)     \\
= \hat{f} (\alpha_{1} , \ldots , \alpha_{i_{0}}-1 , \ldots , \alpha_{n}) \, .
\end{multline*}
That is
\[
  \hat{\tilde{f}} (\alpha) =
\begin{cases}
\hat{f}(\beta) & \text{ if } \alpha = (\beta_{1} , \ldots , \beta_{i_{0}} + 1 , \ldots , \beta_{n}) \\
0  & \text{ otherwise }
\end{cases}
\]
and this gives
\begin{multline*}
 \sum_{\beta} \vert \hat{f} (\beta) z^{\beta} \vert = \frac{1}{\vert z_{i_{0}} \vert} \sum_{\beta} \vert \hat{f} (\beta) z^{\beta} \vert \, \vert z_{i_{0}} \vert \\
= \frac{1}{\vert z_{i_{0}} \vert} \sum_{\beta} \vert \hat{f} (\beta) z_{1}^{\beta_{1}} \cdots z_{i_{0}}^{\beta_{i_{0}}+1} \cdots z_{n}^{\beta_{n}}  \vert
= \sum_{\alpha} \vert \hat{\tilde{f}} (\alpha) z^{\alpha} \vert < \infty \, .
\end{multline*}
Hence $z \in \mon H_{p}^{m-1} (\mathbb{T}^{\infty})$.
\end{proof}

To give the description of $\mon H_{p}^{m}(\mathbb{T}^{\infty})$ we aim at we are going to use the following result, that re-proves the know fact from \cite{CoGa86} that
$H_{1}^{m}(\mathbb{T}^{\infty}) = H_{2}^{m}(\mathbb{T}^{\infty})$ but with a more precise control of the constants on
the equivalence of the norms. We get this control from \eqref{eq:bonami 1}, the inequality on polynomials on finitely many variables that we already used in the proof of
Lemma~\ref{lemma2}.

\begin{lemma} \label{bonami}
We have $H_{1}^{m}(\mathbb{T}^{\infty}) = H_{2}^{m}(\mathbb{T}^{\infty})$ and for all $f$
\begin{equation} \label{eq:bonami}
\Vert f \Vert_{1} \leq \Vert f \Vert_{2} \leq (\sqrt{2})^{m} \Vert f \Vert_{1} \, .
\end{equation}
\end{lemma}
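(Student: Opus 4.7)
The first inequality $\|f\|_1\le\|f\|_2$ is immediate from the Cauchy--Schwarz inequality applied on the probability space $(\mathbb{T}^\infty,m)$, so all the work goes into the reverse inequality and the identification of the two spaces.

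The plan is to first establish the inequality on the dense subspace of $m$-homogeneous trigonometric polynomials and then extend by completion. Given such a polynomial $f=\sum_{\vert\alpha\vert=m} c_\alpha w^\alpha$, only finitely many coordinates appear, so $f$ can be viewed as a function on some $\mathbb{T}^k$. Orthonormality of the characters $(w^\alpha)_\alpha$ in $L_2(\mathbb{T}^k)$ (and hence in $L_2(\mathbb{T}^\infty)$) gives $\|f\|_2^2=\sum_{\alpha\in\Lambda(m,k)}\vert c_\alpha\vert^2$, and $\|f\|_1=\int_{\mathbb{T}^k}\vert\sum c_\alpha z^\alpha\vert\,dm_k(z)$. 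Plugging these into \eqref{eq:bonami 1} yields
\[
\|f\|_2=\Big(\sum_{\alpha\in\Lambda(m,k)}\vert c_\alpha\vert^2\Big)^{1/2}\le(\sqrt 2)^m\|f\|_1,
\]
which is exactly the polynomial version of \eqref{eq:bonami}.

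To promote this to all of $H_1^m(\mathbb{T}^\infty)$, I would use the fact (cited from \cite[Section~9]{CoGa86} and already invoked in Proposition~\ref{polco}) that the $m$-homogeneous trigonometric polynomials are dense in $H_1^m(\mathbb{T}^\infty)$. Given $f\in H_1^m(\mathbb{T}^\infty)$, pick such a sequence $(f_n)$ with $f_n\to f$ in $L_1$. Since each difference $f_n-f_{n'}$ is itself an $m$-homogeneous trigonometric polynomial, the polynomial-level inequality gives
\[
\|f_n-f_{n'}\|_2\le(\sqrt 2)^m\|f_n-f_{n'}\|_1,
\]
so $(f_n)$ is Cauchy in $L_2$ as well. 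Its $L_2$-limit $g$ agrees with $f$ almost everywhere (passing to a subsequence converging a.e.\ in both norms), and $g$ lies in $H_2^m(\mathbb{T}^\infty)$ because this space is the $L_2$-closure of the same trigonometric polynomials. Taking limits in $\|f_n\|_2\le(\sqrt 2)^m\|f_n\|_1$ yields $\|f\|_2\le(\sqrt 2)^m\|f\|_1$, establishing both $H_1^m(\mathbb{T}^\infty)\subseteq H_2^m(\mathbb{T}^\infty)$ and the right-hand inequality of \eqref{eq:bonami}. The reverse inclusion $H_2^m(\mathbb{T}^\infty)\subseteq H_1^m(\mathbb{T}^\infty)$ is free from the already proved $\|\cdot\|_1\le\|\cdot\|_2$.

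There is no genuine obstacle here: the only thing to be careful about is the density step (making sure one can use $m$-homogeneous trigonometric polynomials, not arbitrary ones, so that the polynomial inequality \eqref{eq:bonami 1} applies to each $f_n-f_{n'}$) and the passage from the finite-dimensional polynomial estimate to the infinite polytorus, which works because each trigonometric polynomial depends on only finitely many variables and the Haar measure on $\mathbb{T}^\infty$ projects onto the Haar measure on $\mathbb{T}^k$.
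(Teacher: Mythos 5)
Your proof is correct and follows essentially the same route as the paper: both reduce to an $m$-homogeneous trigonometric polynomial supported on finitely many variables, apply the finite-dimensional inequality \eqref{eq:bonami 1} together with orthonormality of the characters to get $\Vert f\Vert_2\leq(\sqrt 2)^m\Vert f\Vert_1$, and then extend by density of such polynomials in both $H_1^m(\mathbb{T}^\infty)$ and $H_2^m(\mathbb{T}^\infty)$. Your write-up merely spells out the Cauchy-sequence/completion step that the paper leaves implicit.
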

\begin{proof}
If $f$ is a trigonometric polynomial, then there is a finite set $J$ of multi-indices of order $m$ such that $f(w)=\sum_{\alpha \in J} c_{\alpha} w^{\alpha}$ for all
$w \in \mathbb{T}^{\infty}$. But, being $J$ finite, there is $k$ such that $J \subseteq \Lambda (m,k)$ and $\sum_{\alpha \in J} c_{\alpha} z^{\alpha}$, now
for $z \in \mathbb{C}^{k}$, defines an $m$-homogeneous polynomial in $k$ variables. Then \eqref{eq:bonami 1} gives that \eqref{eq:bonami} holds for every
trigonometric polynomial. Since these are dense both in $H_{1}^{m}(\mathbb{T}^{\infty})$ and in $H_{2}^{m}(\mathbb{T}^{\infty})$ we have that both spaces are equal
and \eqref{eq:bonami} holds for every $f$.
\end{proof}

\begin{theorem} \label{final}
\[
  \mon H_{p}^{m} (\mathbb{T}^{\infty}) =
\begin{cases}
  \ell_{2} \cap \mathbb{D}^{\infty} & \text{ for } 1 \leq p < \infty \\
  \ell_{\frac{2m}{m-1}, \infty} \cap \mathbb{D}^{\infty} & \text{ for }  p = \infty
\end{cases}
\]
Moreover, there exists some universal constant $C>0$ such that if $z \in \mon H_{p}^{m} (\mathbb{T}^{\infty})$ and  $f \in H_{p}^{m} (\mathbb{T}^{\infty})$
then
\begin{equation} \label{normas final}
\sum_{\vert \alpha \vert =m} \vert \hat{f} (\alpha) z^{\alpha} \vert \leq C^{m} \Vert z \Vert^{m} \Vert f \Vert_{p} \, ,
\end{equation}
where $\Vert z \Vert$ is the norm in the corresponding sequence space.
\end{theorem}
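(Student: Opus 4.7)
The argument splits according to whether $p=\infty$ or $1\le p<\infty$. The case $p=\infty$ is immediate: Proposition~\ref{polco} isometrically identifies $H_{\infty}^{m}(\mathbb{T}^{\infty})$ with $\mathcal{P}(^{m}c_{0})$ in a way that sends Fourier coefficients to monomial coefficients, so in view of~\eqref{00} and Theorem~\ref{polinomios} one obtains both the set equality $\mon H_{\infty}^{m}(\mathbb{T}^{\infty})=\ell_{\frac{2m}{m-1},\infty}\cap\mathbb{D}^{\infty}$ and the bound~\eqref{normas final} for $p=\infty$ (with the same $C$ as in~\eqref{eq:polinomios}).

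For $1\le p<\infty$ the plan is to reduce everything to $p=2$. Lemma~\ref{bonami} combined with the trivial inequality $\Vert f\Vert_{1}\le\Vert f\Vert_{p}$ on the probability space $(\mathbb{T}^{\infty},m)$ yields
$$\Vert f\Vert_{2}\le(\sqrt{2})^{m}\Vert f\Vert_{p}\qquad\text{for every }f\in H_{p}^{m}(\mathbb{T}^{\infty}),$$
and $H_{2}^{m}(\mathbb{T}^{\infty})$ is a Hilbert space with orthonormal basis $\{w^{\alpha}:\vert\alpha\vert=m\}$, so that $\Vert f\Vert_{2}^{2}=\sum_{\vert\alpha\vert=m}\vert\hat{f}(\alpha)\vert^{2}$. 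For the inclusion $\ell_{2}\cap\mathbb{D}^{\infty}\subseteq\mon H_{p}^{m}(\mathbb{T}^{\infty})$ with~\eqref{normas final}, fix $z\in\ell_{2}\cap\mathbb{D}^{\infty}$. The multinomial identity gives
$$\sum_{\vert\alpha\vert=m}\vert z^{\alpha}\vert^{2}\le\sum_{\vert\alpha\vert=m}\frac{m!}{\alpha!}\vert z^{\alpha}\vert^{2}=\Vert z\Vert_{2}^{2m},$$
so Cauchy--Schwarz applied to $f\in H_{2}^{m}(\mathbb{T}^{\infty})$ produces
$$\sum_{\vert\alpha\vert=m}\vert\hat{f}(\alpha)z^{\alpha}\vert\le\Vert f\Vert_{2}\,\Vert z\Vert_{2}^{m}\le(\sqrt{2})^{m}\Vert z\Vert_{2}^{m}\Vert f\Vert_{p},$$
which establishes this inclusion together with~\eqref{normas final} with $C=\sqrt{2}$.

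For the reverse inclusion fix $z\in\mon H_{p}^{m}(\mathbb{T}^{\infty})=\mon H_{2}^{m}(\mathbb{T}^{\infty})$. Then the diagonal linear map
$$\Phi_{z}\colon H_{2}^{m}(\mathbb{T}^{\infty})\longrightarrow\ell_{1}(\{\vert\alpha\vert=m\}),\qquad f\longmapsto\bigl(\hat{f}(\alpha)z^{\alpha}\bigr)_{\alpha},$$
is everywhere defined by hypothesis; since Fourier coefficients are $L_{2}$-continuous and $\ell_{1}$-convergence forces coordinate convergence, $\Phi_{z}$ has closed graph, hence is bounded. Testing $\Phi_{z}$ on $f=\sum_{\alpha\in F}\overline{z^{\alpha}}w^{\alpha}$ for arbitrary finite $F$ (or equivalently computing the norm of a diagonal operator $\ell_{2}\to\ell_{1}$) gives $\Vert\Phi_{z}\Vert=\bigl(\sum_{\vert\alpha\vert=m}\vert z^{\alpha}\vert^{2}\bigr)^{1/2}$, so this sum is finite.

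The main obstacle is then the combinatorial step of upgrading $\sum_{\vert\alpha\vert=m}\vert z^{\alpha}\vert^{2}<\infty$ to $z\in\ell_{2}$: because $\frac{m!}{\alpha!}$ is \emph{large} precisely on the multi-indices one would like to exploit, the easy multinomial estimate runs the wrong way. I handle this as follows. If $z$ has only finitely many nonzero entries the conclusion is trivial; otherwise pick indices $j_{1}<\cdots<j_{m-1}$ with $z_{j_{i}}\neq 0$ for all $i$ and restrict the sum to multi-indices of the special form $\alpha=e_{j}+e_{j_{1}}+\cdots+e_{j_{m-1}}$ with $j>j_{m-1}$, obtaining
$$\Bigl(\prod_{i=1}^{m-1}\vert z_{j_{i}}\vert^{2}\Bigr)\sum_{j>j_{m-1}}\vert z_{j}\vert^{2}\le\sum_{\vert\alpha\vert=m}\vert z^{\alpha}\vert^{2}<\infty,$$
which forces $\sum_{j}\vert z_{j}\vert^{2}<\infty$ and completes the proof.
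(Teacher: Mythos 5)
Your proposal is correct in substance, and for the crucial reverse inclusion $\mon H_{p}^{m}(\mathbb{T}^{\infty})\subseteq\ell_{2}\cap\mathbb{D}^{\infty}$ it takes a genuinely different route from the paper. The paper first proves Lemma~\ref{inclusion} ($\mon H_{p}^{m}\subseteq\mon H_{p}^{m-1}$, by multiplying $f$ by a coordinate $w_{i_{0}}$), descends to the degree-one case, and there identifies $\mon H_{2}^{1}$ with $\ell_{2}$ by testing against $f_{N}(w)=\sum_{n\leq N}y_{n}w_{n}$ and using $(\ell_{2})^{*}=\ell_{2}$. You instead stay in degree $m$: the closed-graph argument shows the diagonal map $H_{2}^{m}(\mathbb{T}^{\infty})\to\ell_{1}$ is bounded with norm $\big(\sum_{\vert\alpha\vert=m}\vert z^{\alpha}\vert^{2}\big)^{1/2}$, and your restriction to the multi-indices $e_{j}+e_{j_{1}}+\cdots+e_{j_{m-1}}$ extracts $\sum_{j}\vert z_{j}\vert^{2}<\infty$ directly. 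That combinatorial step is correct (the relevant multi-indices are pairwise distinct and of order $m$, and the omitted finite-support case is indeed trivial), and it renders Lemma~\ref{inclusion} unnecessary for this theorem. The $p=\infty$ case and the forward inclusion via Cauchy--Schwarz and the multinomial identity coincide with the paper's argument, and your constant $C=\sqrt{2}$ in \eqref{normas final} agrees with the paper's.

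One loose end you should tie up: for $2<p<\infty$ the equality $\mon H_{p}^{m}(\mathbb{T}^{\infty})=\mon H_{2}^{m}(\mathbb{T}^{\infty})$ that you assert does not follow from your reduction $\Vert f\Vert_{2}\leq(\sqrt{2})^{m}\Vert f\Vert_{p}$. That inequality gives the continuous inclusion $H_{p}^{m}\hookrightarrow H_{2}^{m}$, hence only $\mon H_{2}^{m}\subseteq\mon H_{p}^{m}$; the reverse inclusion of the theorem needs $H_{2}^{m}\subseteq H_{p}^{m}$, i.e.\ $\Vert f\Vert_{p}\leq C_{p,m}\Vert f\Vert_{2}$ for $m$-homogeneous trigonometric polynomials. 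This is precisely the Khintchine-type theorem of Cole and Gamelin \cite[9.1~Theorem]{CoGa86} invoked at the beginning of the section and cited in the paper's proof; without it, your closed-graph/test-function step for $p>2$ only yields $\sum_{\alpha\in F}\vert z^{\alpha}\vert^{2}\leq c_{z}\Vert f_{F}\Vert_{p}$ with no control of $\Vert f_{F}\Vert_{p}$ in terms of $\big(\sum_{\alpha\in F}\vert z^{\alpha}\vert^{2}\big)^{1/2}$. With that citation made explicit, the proof is complete.
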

\begin{proof}
The case $p=\infty$ follows from Theorem~\ref{polinomios} and Proposition~\ref{polco}. For $p < \infty$ let us first observe that by \cite[9.1~Theorem]{CoGa86}
$H_{p}^{m} (\mathbb{T}^{\infty}) = H_{2}^{m} (\mathbb{T}^{\infty})$ with equivalent norms, hence
\[
\mon   H_{p}^{m} (\mathbb{T}^{\infty}) = \mon H_{2}^{m} (\mathbb{T}^{\infty})
\]
for every $1 \leq p < \infty$ and all $m$ and it suffices to handle the case $p=2$. If $z \in \ell_{2} \cap \mathbb{D}^{\infty}$ and $f \in H_{2}^{m}(\mathbb{T}^{\infty})$
we apply the Cauchy-Schwarz inequality and the binomial formula to get
\begin{equation} \label{CauchySchwarz}
\sum_{\substack{\alpha \in \mathbb{N}_{0}^{(\mathbb{N})} \\ \vert \alpha \vert = m}} \vert \hat{f} (\alpha) z^{\alpha} \vert
\leq \Big( \sum_{\substack{\alpha \in \mathbb{N}_{0}^{(\mathbb{N})} \\ \vert \alpha \vert = m}} \vert \hat{f}(\alpha) \vert^{2} \Big)^{\frac{1}{2}}
\Big( \sum_{\substack{\alpha \in \mathbb{N}_{0}^{(\mathbb{N})} \\ \vert \alpha \vert = m}} \vert z \vert^{2 \alpha} \Big)^{\frac{1}{2}}
\leq \Vert z \Vert_{2}^{m} \Vert f \Vert_{2} < \infty \, ;
\end{equation}
this implies $z \in \mon H_{2}^{m}(\mathbb{T}^{\infty})$. \\
Let us now fix $z \in \mon H_{2}^{1}(\mathbb{T}^{\infty})$. By a closed-graph argument, there exists $c_{z}$ such that for every $f \in H_{2}^{1}(\mathbb{T}^{\infty})$
the inequality
$\sum_{n=1}^{\infty} \vert \hat{f}(n) z_{n} \vert \leq c_{z} \Vert f \Vert_{2}$ holds.
We fix now $y \in \ell_{2}$ and for each $N \in \mathbb{N}$ we define a function $f_{N} : \mathbb{T}^{\infty} \to \mathbb{C}$ by
$f_{N}(w)= \sum_{n=1}^{N} w_{n} y_{n}$. Clearly $\hat{f}(n) = y_{n}$ for $n=1, \ldots , N$ and $f \in H_{2}^{1}(\mathbb{T}^{\infty})$; then
we have
\[
\sum_{n=1}^{N} \vert \hat{f}(n) z_{n} \vert \leq c_{z} \Big(  \sum_{n=1}^{N} \vert y_{n} \vert^{2} \Big)^{\frac{1}{2}}
\leq  c_{z} \Vert y \Vert_{2} < \infty \, .
\]
This holds for every $N$, hence $\sum_{n=1}^{\infty} \vert \hat{f}(n) z_{n} \vert \leq  c_{z} \Vert y \Vert_{2}$ and, since this holds for every $y \in \ell_{2}$,
we have $z \in \ell_{2}$. This gives
\[
 \ell_{2} \cap \mathbb{D}^{\infty} \subseteq \mon H_{2}^{m} (\mathbb{T}^{\infty})
\subseteq \mon H_{2}^{1} (\mathbb{T}^{\infty}) \subseteq \ell_{2} \cap \mathbb{D}^{\infty} \, .
\]
Finally, inequality \eqref{normas final} follows immediately from Theorem~\ref{polinomios} and Proposition~\ref{polco} for the case $p=\infty$. Inequality \eqref{CauchySchwarz}
gives  \eqref{normas final} for $2 \leq p < \infty$ with $C \leq 1$. Finally \eqref{CauchySchwarz} and \eqref{eq:bonami} give the inequality with
$C \leq \sqrt{2}$ whenever $1 \leq p < 2$.
\end{proof}

\subsection{The general case}

We address now our main goal of describing $\mon H_{p} (\mathbb{T}^{\infty})$. As it often happens (and was also the case in the $m$-homogeneous setting), there
are only two significant cases: $p= \infty$ and  $p=2$. The description of $\mon H_{p}(\mathbb{T}^{\infty})$  for $1 \leq p < \infty$
will follow from the cases $p=2$ and $p=1$, showing that these two coincide. We state our main result.
\begin{theorem} \label{general}$\ $
\begin{enumerate}
  \item \label{general inf} We have $\mon H_{\infty} (\mathbb{T}^{\infty}) = \mon H_{\infty} (\mathbb{D}^{\infty})$. In particular, by Theorem~\ref{Leonhard}
\[
  \frac{1}{\sqrt{2}} \mathbf{B} \,\subseteq \,\mon H_{\infty}(\mathbb{T}^{\infty}) \,\subseteq\, \mathbf{\overline{B}}\, .
\]

  \item \label{general p} For $1 \leq p < \infty$ we have  $\mon H_{p}(\mathbb{T}^{\infty}) = \ell_{2} \cap  \mathbb{D}^{\infty}$.
\end{enumerate}
\end{theorem}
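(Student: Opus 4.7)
My plan is as follows. For part (\ref{general inf}), I would use the isometric identification $H_\infty(\mathbb{T}^\infty)\cong H_\infty(\mathbb{D}^\infty_0)$ recalled in the introduction, which carries the Fourier coefficients $\hat f(\alpha)$ of $f$ to the monomial coefficients $c_\alpha(\tilde f)$ of the corresponding holomorphic function $\tilde f$. Since both $\mon H_\infty(\mathbb{T}^\infty)$ and $\mon H_\infty(\mathbb{D}^\infty_0)$ are ultimately defined by absolute convergence of the same formal series $\sum c_\alpha z^\alpha$ (with the equality to the function value in the holomorphic case being automatic under absolute convergence, by comparing with finite-dimensional sections), they coincide; combined with \eqref{00} this gives $\mon H_\infty(\mathbb{T}^\infty)=\mon H_\infty(\mathbb{D}^\infty)$.

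For part (\ref{general p}), the inclusion $\mon H_p(\mathbb{T}^\infty)\subseteq\ell_2\cap\mathbb{D}^\infty$ is immediate from $H_p^1(\mathbb{T}^\infty)\subseteq H_p(\mathbb{T}^\infty)$ and Theorem~\ref{final}: every $z\in\mon H_p(\mathbb{T}^\infty)$ lies in $\mon H_p^1(\mathbb{T}^\infty)=\ell_2\cap\mathbb{D}^\infty$. For the reverse inclusion I observe that $H_p(\mathbb{T}^\infty)\subseteq H_1(\mathbb{T}^\infty)$ for every $1\leq p<\infty$, whence $\mon H_1(\mathbb{T}^\infty)\subseteq\mon H_p(\mathbb{T}^\infty)$, so it suffices to prove $\ell_2\cap\mathbb{D}^\infty\subseteq\mon H_1(\mathbb{T}^\infty)$. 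The intermediate case $p=2$ is easy: for $f\in H_2(\mathbb{T}^\infty)$, Parseval and Cauchy--Schwarz give $\sum_\alpha|\hat f(\alpha)z^\alpha|\leq\|f\|_2\prod_{n=1}^\infty(1-|z_n|^2)^{-1/2}<\infty$, and the infinite product converges exactly when $z\in\ell_2$.

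To handle $p=1$ I would invoke a factorization $H_1(\mathbb{T}^\infty)=H_2(\mathbb{T}^\infty)\cdot H_2(\mathbb{T}^\infty)$: every $f\in H_1(\mathbb{T}^\infty)$ admits a decomposition $f=gh$ with $g,h\in H_2(\mathbb{T}^\infty)$ and $\|g\|_2\|h\|_2\leq C\|f\|_1$. Accepting this, the Fourier-convolution identity $\hat f(\alpha)=\sum_{\beta+\gamma=\alpha}\hat g(\beta)\hat h(\gamma)$ together with the $p=2$ bound applied to each factor yields, for $z\in\ell_2\cap\mathbb{D}^\infty$,
\begin{equation*}
\sum_\alpha|\hat f(\alpha)z^\alpha|\leq\Bigl(\sum_\beta|\hat g(\beta)z^\beta|\Bigr)\Bigl(\sum_\gamma|\hat h(\gamma)z^\gamma|\Bigr)\leq\|g\|_2\|h\|_2\prod_{n=1}^\infty(1-|z_n|^2)^{-1}<\infty,
\end{equation*}
so that $z\in\mon H_1(\mathbb{T}^\infty)$.

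The main obstacle is the factorization step in infinitely many variables. On each finite polytorus $\mathbb{T}^k$ the factorization $H_1=H_2\cdot H_2$ is classical via inner--outer factorization on the polydisk $\mathbb{D}^k$. To lift it to $\mathbb{T}^\infty$, my approach would be to take the conditional expectations $f_k\in H_1(\mathbb{T}^k)$ (which converge to $f$ in $L^1$), factor each $f_k=g_kh_k$ with $\|g_k\|_2\|h_k\|_2\leq\|f\|_1$ on $\mathbb{T}^k$, and extract weak limits $g_k\rightharpoonup g$, $h_k\rightharpoonup h$ in $H_2(\mathbb{T}^\infty)$; the delicate point is to prove that the products $g_kh_k$ converge back to $f=gh$, which likely requires working with canonical (outer) factorizations rather than arbitrary ones, or invoking directly the Cole--Gamelin infinite-dimensional machinery of \cite{CoGa86}.
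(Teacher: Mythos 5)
Your treatment of part (\ref{general inf}) and of the upper inclusion in part (\ref{general p}) is essentially the paper's: part (\ref{general inf}) is \eqref{00} combined with the isometry of Proposition~\ref{H intftys}, which carries $\hat f(\alpha)$ to $c_\alpha(\phi(f))$; the upper inclusion is read off from $H_p^1(\mathbb{T}^\infty)\subseteq H_p(\mathbb{T}^\infty)$ and Theorem~\ref{final}; and the Cauchy--Schwarz computation for $p=2$ is exactly Theorem~\ref{H2}.

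The lower inclusion for $p=1$ is where your argument has a genuine gap. The factorization $H_1=H_2\cdot H_2$ that you call classical is a strictly one-variable phenomenon: inner--outer factorization fails on $\mathbb{D}^k$ for $k\geq 2$, and already $H_1(\mathbb{T}^2)$ contains functions that are not the product of two $H_2(\mathbb{T}^2)$ functions (see \cite{Ru69}). So the finite-dimensional input to your limiting scheme does not exist, quite apart from the convergence issue $g_kh_k\to gh$ that you yourself flag as unresolved. The weak substitute $f=\sum_j g_jh_j$ with $\sum_j\Vert g_j\Vert_2\Vert h_j\Vert_2\leq C_k\Vert f\Vert_1$ does hold on $\mathbb{T}^k$, but it is a deep theorem whose constants are not uniform in $k$; the uniform statement on $\mathbb{T}^\infty$ is equivalent to Nehari's theorem for the infinite polytorus and is known to fail. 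The paper avoids factorization entirely: it applies the contractive homogeneous projections $P_m:H_1(\mathbb{T}^\infty)\to H_1^m(\mathbb{T}^\infty)$ of Cole--Gamelin together with the hypercontractive inequality $\Vert f_m\Vert_2\leq(\sqrt2)^m\Vert f_m\Vert_1$ of Lemma~\ref{bonami}, so that for $z=ry$ with $y\in B_{\ell_2}$ and $r<1/\sqrt2$ one gets $\sum_\alpha\vert\hat f(\alpha)z^\alpha\vert\leq K\sum_m(r\sqrt2)^m\Vert f\Vert_1<\infty$; the restriction to the ball of radius $1/\sqrt2$ is then removed by Lemma~\ref{annalen}, since every $z\in\ell_2\cap\mathbb{D}^\infty$ agrees with an element of $rB_{\ell_2}$ in all but finitely many coordinates. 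Replacing your factorization step by this homogeneous-decomposition argument (or by some other device of comparable strength) is necessary, not optional.
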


Part \textit{(\ref{general inf})} follows immediately from \eqref{00} and the following result, that is known (see \cite[11.2 Theorem]{CoGa86}
and \cite[Lemma~2.3]{HeLiSe97}); we include an elementary direct proof of it for the sake of completeness. The statement about the inverse mapping seems to be new.
\begin{proposition} \label{H intftys}
 There exists a unique surjective isometry $\phi : H_{\infty} (\mathbb{T}^{\infty}) \rightarrow H_{\infty} (\mathbb{D}^{\infty}_{0})$ such that
$c_{\alpha} \big(\phi (f)\big) = \hat{f}(\alpha)$ for every $f \in H_{\infty} (\mathbb{T}^{\infty})$ and every $\alpha \in \mathbb{N}_{0}^{(\mathbb{N})}$.\\
Moreover, when restricted to $H_{\infty}^{m} (\mathbb{T}^{\infty})$, the mapping $\psi$ defined in Proposition~\ref{polco} and $\phi$ are inverse to each other.
\end{proposition}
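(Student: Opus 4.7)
\textit{Proof proposal.} The plan is to send each $f \in H_{\infty}(\mathbb{T}^{\infty})$ to the holomorphic function on $\mathbb{D}_0^\infty$ whose monomial coefficients are $(\hat f(\alpha))_{\alpha \in \mathbb{N}_0^{(\mathbb{N})}}$, and to invert $\phi$ by recognizing the $L_2$--limit of finite-dimensional boundary values.

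First I would fix $f \in H_\infty(\mathbb{T}^\infty)$. Denoting by $\mathcal{F}_k$ the $\sigma$-algebra generated by the first $k$ coordinates and setting $f_k := E[f\mid\mathcal{F}_k]$, an easy Fubini computation yields $\hat{f}_k(\alpha)=\hat f(\alpha_1,\ldots,\alpha_k,0,\ldots)$ for $\alpha \in \mathbb{Z}^k$; in particular $f_k \in H_\infty(\mathbb{T}^k)$ with $\|f_k\|_\infty \leq \|f\|_\infty$. Classical one-variable Hardy space theory, iterated $k$ times, produces the Poisson extension $F_k \in H_\infty(\mathbb{D}^k)$, for which $\|F_k\|_\infty = \|f_k\|_\infty$ and $F_k(z)=\sum_{\alpha \in \mathbb{N}_0^k}\hat f(\alpha)\, z^\alpha$. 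Thus the partial sums of $\sum_\alpha \hat f(\alpha) z^\alpha$ satisfy Hilbert's uniform bound \eqref{e} with constant $\|f\|_\infty$; by the Davie--Gamelin extension result quoted just before \eqref{Toeppi} (\cite[Lemma 2.2]{DeGaMa10}), this formal series defines a unique $\phi(f)\in H_\infty(\mathbb{D}_0^\infty)$ with $\|\phi(f)\|_\infty\le\|f\|_\infty$ and $c_\alpha(\phi(f))=\hat f(\alpha)$.

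To invert $\phi$, fix $g \in H_\infty(\mathbb{D}_0^\infty)$ and write $c_\alpha:=c_\alpha(g)$. The restrictions $G_k(z):=g(z_1,\ldots,z_k,0,\ldots)\in H_\infty(\mathbb{D}^k)$ have boundary values $\tilde G_k\in H_\infty(\mathbb{T}^k)\subseteq L_\infty(\mathbb{T}^\infty)$ with $\|\tilde G_k\|_\infty=\|G_k\|_\infty\le\|g\|_\infty$, and Parseval on $\mathbb{T}^k$ gives
\[
\sum_{\alpha \in \mathbb{N}_0^k}|c_\alpha|^2
= \|G_k\|_{H_2(\mathbb{D}^k)}^{2}
\leq \|g\|_\infty^{2}.
\]
Letting $k\to\infty$ shows $(c_\alpha)\in\ell_2(\mathbb{N}_0^{(\mathbb{N})})$, so there is a unique $F\in L_2(\mathbb{T}^\infty)$ with $\hat F(\alpha)=c_\alpha$ for $\alpha\in\mathbb{N}_0^{(\mathbb{N})}$ and $\hat F(\alpha)=0$ otherwise. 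A direct Fourier computation shows $\tilde G_k\to F$ in $L_2$, so a subsequence converges almost everywhere, forcing $|F|\le\|g\|_\infty$ a.e.; hence $F\in H_\infty(\mathbb{T}^\infty)$ with $\|F\|_\infty\le\|g\|_\infty$, and I set $\phi^{-1}(g):=F$.

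Combining the two constructions, $\phi\circ\phi^{-1}$ and $\phi^{-1}\circ\phi$ preserve all Fourier/monomial coefficients, hence are identities by the uniqueness theorems for Fourier series on $L_2(\mathbb{T}^\infty)$ and for monomial expansions on $\mathbb{D}_0^\infty$. Chaining the two norm inequalities gives $\|\phi(f)\|_\infty=\|f\|_\infty$, and uniqueness of $\phi$ is immediate from its prescribed action on coefficients. For the final assertion, Proposition~\ref{polco} says that $\psi$ maps $P \in \mathcal{P}(^m c_0)$ to a function in $H_\infty^m(\mathbb{T}^\infty)$ whose Fourier coefficients are precisely the monomial coefficients of $P$; therefore $\phi(\psi(P))$ has the same monomial coefficients as $P$ and must coincide with it, so $\phi|_{H_\infty^m(\mathbb{T}^\infty)}=\psi^{-1}$. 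The main technical hurdle I foresee is the identification of $F_k$, the Poisson extension of the conditional expectation, with the truncated monomial series together with the norm bound $\|F_k\|_\infty\le\|f\|_\infty$; once this is in place, the $L_2$ approximation argument producing $\phi^{-1}$ and the coefficient--based verifications are routine.
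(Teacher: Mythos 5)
Your proposal is correct, and the forward direction (conditional expectations $f_k$, the finite--dimensional isometries $H_\infty(\mathbb{T}^k)\to H_\infty(\mathbb{D}^k)$, Hilbert's uniform bound \eqref{e}, and \cite[Lemma~2.2]{DeGaMa10}) coincides with the paper's argument. Where you genuinely diverge is in producing $\phi^{-1}$. The paper takes the boundary functions $\tilde f_{n}$ of the restrictions $g_n$, observes that they lie in the ball of radius $\Vert g\Vert_\infty$ of $L_\infty(\mathbb{T}^\infty)=L_1(\mathbb{T}^\infty)^{*}$, and extracts a weak$^{*}$-convergent subsequence using weak$^{*}$-compactness and metrizability of that ball; the limit is the preimage. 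You instead exploit the trivial bound $\Vert G_k\Vert_{H_2(\mathbb{D}^k)}\le\Vert g\Vert_\infty$ to get $(c_\alpha)\in\ell_2$, define $F\in L_2(\mathbb{T}^\infty)$ by Riesz--Fischer, note that $\Vert F-\tilde G_k\Vert_2^2=\sum_{\alpha\notin\mathbb{N}_0^k}|c_\alpha|^2\to 0$, and upgrade to $\Vert F\Vert_\infty\le\Vert g\Vert_\infty$ via almost-everywhere convergence of a subsequence. This is a legitimate and somewhat more elementary route: it replaces functional-analytic compactness by Parseval plus a standard measure-theoretic fact, and it has the small advantage of exhibiting the preimage as an explicit $L_2$-limit rather than as a weak$^{*}$ cluster point. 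The price is that it leans on the Hilbert-space structure of $L_2(\mathbb{T}^\infty)$, whereas the paper's argument would survive in settings without such a crutch. Your treatment of the final assertion (matching the Fourier coefficients of $\psi(P)$ with the monomial coefficients of $P$ and invoking uniqueness of monomial expansions) is also sound and is in fact shorter than the paper's approximation argument via finite polynomials $P_k$ and the Davie--Gamelin norm identity; just make sure you record explicitly that $\widehat{\psi(P)}(\alpha)=c_\alpha(P)$ for general $P$ follows from the dense class of finite monomial sums by continuity of both coefficient functionals. One cosmetic slip: the extension lemma you use at the end of the forward direction is \cite[Lemma~2.2]{DeGaMa10}, not the Davie--Gamelin theorem; you cite the right reference but attach the wrong name.
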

\begin{proof}
First of all, let us note that in the finite dimensional setting the result is true: It is a well known fact (see e.g. \cite[3.4.4 exercise (c)]{Ru69}) that for each $n$ there exists
an isometric bijection $\phi_{n} : H_{\infty} (\mathbb{T}^{n}) \rightarrow H_{\infty} (\mathbb{D}^{n})$ such that
$c_{\alpha} \big(\phi (f)\big) = \tilde{f}(\alpha)$ for every $f \in H_{\infty} (\mathbb{T}^{n})$ and every $\alpha \in \mathbb{N}_{0}^{n}$.\\
Take now $f \in H_{\infty} (\mathbb{T}^{\infty})$ and fix $n \in \mathbb{N}$; since we can consider  $\mathbb{T}^{\infty} = \mathbb{T}^{n} \times \mathbb{T}^{\infty}$, we write $w  = (w_{1}, \ldots , w_{n} , \tilde{w}_{n})$ $\in \mathbb{T}^{\infty}$. Then we define
$f_{n} : \mathbb{T}^{n}  \rightarrow \mathbb{C}$ by
\[
 f_{n}(w_{1} , \ldots w_{n}) = \int_{\mathbb{T}^{\infty}} f (w_{1}, \ldots , w_{n} , \tilde{w}_{n}) dm(\tilde{w}_{n}) \, .
\]
By the Fubini theorem $f_{n}$ is well defined a.e. and
\[
 \int_{\mathbb{T}^{\infty}} f (w) d m(w)
=   \int_{\mathbb{T}^{n}} \bigg( \int_{\mathbb{T}^{\infty}} f (w_{1}, \ldots , w_{n} , \tilde{w}_{n}) d m (\tilde{w}_{n}) \bigg) d m_{n} (w_{1}, \ldots , w_{n}) \, ,
\]
hence $f_{n} \in L_{\infty} (\mathbb{T}^{n})$. Moreover, for $\alpha \in \mathbb{Z}^{n}$ we have, again by Fubini
\[
 \hat{f}_{n} (\alpha) =
\int_{\mathbb{T}^{n} \times \mathbb{T}^{\infty}} f(w) w^{- \alpha} dm(w) = \hat{f} (\alpha) \, .
\]
Thus $\hat{f}_{n} (\alpha) = \hat{f} (\alpha) = 0$ for every $\alpha \in \mathbb{Z}^{n} \setminus \mathbb{N}^{n}_{0}$ and $f_{n} \in H_{\infty} (\mathbb{T}^{n})$.
Obviously $\Vert f_{n} \Vert_{\infty} \leq \Vert f \Vert_{\infty}$ since the measure is a probability. We take $g_{n} = \phi_{n} (f_{n}) \in H_{\infty}(\mathbb{D}^{n})$. We have
$\Vert g_{n} \Vert_{\infty} = \Vert f_{n} \Vert_{\infty} \leq \Vert f \Vert_{\infty}$ and
\[
 g_{n} (z) = \sum_{\alpha \in \mathbb{N}_{0}^{n}} \hat{f}_{n} (\alpha) z^{\alpha} = \sum_{\alpha \in \mathbb{N}_{0}^{n}} \hat{f} (\alpha) z^{\alpha}
\]
for every $z \in \mathbb{D}^{n}$. Since this holds for every $N$ we can define $g : \mathbb{D}^{(\mathbb{N})} \rightarrow \mathbb{C}$ by
$g(z) = \sum_{\alpha \in \mathbb{N}_{0}^{n}} \hat{f} (\alpha) z^{\alpha}$. We have $\Vert g \Vert_{\infty} = \sup_{n} \Vert g_{n} \Vert_{\infty} \leq \Vert f \Vert_{\infty}$.
 By \cite[Lemma~2.2]{DeGaMa10} there exists a unique extension $\tilde{g} \in H_{\infty} (\mathbb{D}_{0}^{\infty})$ with $c_{\alpha} (\tilde{g}) =\hat{f} (\alpha)$
and $\Vert \tilde{g} \Vert_{\infty} = \Vert g \Vert_{\infty}  \leq \Vert f \Vert_{\infty}$. Setting $\phi(f) = \tilde{g}$ we have that $\phi : H_{\infty} (\mathbb{T}^{\infty})
\rightarrow H_{\infty} (\mathbb{D}_{0}^{\infty})$ is well defined and such that  for every $f \in H_{\infty} (\mathbb{T}^{\infty})$ we have $\Vert \phi(f) \Vert_{\infty}
\leq \Vert f \Vert_{\infty}$ and $c_{\alpha} \big(\phi (f)\big) = \tilde{f}(\alpha)$  for every $\alpha \in \mathbb{N}_{0}^{(\mathbb{N})}$.\\
On the other hand if $f \in L_{\infty}(\mathbb{T}^{\infty})$ is such that $\hat{f}(\alpha) =0$ for all $\alpha$ then $f =0$. Hence $\phi$ is injective.\\
Let us see that it is also surjective and moreover an isometry. Fix $g \in H_{\infty} (\mathbb{D}_{0}^{\infty})$ and consider $g_{n}$ its restriction to the first $n$ variables.
Clearly $g_{n} \in  H_{\infty} (\mathbb{D}^{n})$ and $\Vert g_{n} \Vert_{\infty} \leq \Vert g \Vert_{\infty}$. Using again \cite[3.4.4 exercise (c)]{Ru69} we can
choose $f_{n} \in   H_{\infty} (\mathbb{T}^{n})$ such that $\Vert f_{n} \Vert_{\infty} = \Vert g_{n} \Vert_{\infty}$ and $c_{\alpha} (g_{n}) = \hat{f}_{n} (\alpha)$
for all $\alpha \in \mathbb{N}_{0}^{n}$. Since $c_{\alpha} (g_{n}) = c_{\alpha} (g)$ we have $\hat{f}_{n} (\alpha)= c_{\alpha} (g)$.
We define now $\tilde{f}_{n} \in H_{\infty} (\mathbb{T}^{\infty})$ by $\tilde{f}_{n} (w) = f_{n} (w_{1} , \ldots , w_{n})$ for
$w \in \mathbb{T}^{\infty}$. Then the sequence $(\tilde{f}_{n})_{n=1}^{\infty}$ is contained in the closed ball in $L_{\infty} (\mathbb{T}^{\infty})$
centered at $0$ and with radius $\Vert g \Vert_{\infty}$. Since this ball is $w^{*}$\!-compact and metrizable,
there is a subsequence $(\tilde{f}_{n_{k}})_{k}$ that $w^{*}$-converges to some $f \in L_{\infty} (\mathbb{T}^{\infty})$
with $\Vert f \Vert_{\infty} \leq \Vert g \Vert_{\infty}$. Moreover,  $\hat{f}(\alpha)
= \langle f , w^{ \alpha} \rangle =$ $ \lim_{k \to \infty} \langle \tilde{f}_{n_{k}} , w^{ \alpha} \rangle = $ $ \lim_{k \to \infty} \hat{\tilde{f}}_{n_{k}} (\alpha)$ for every
$\alpha \in \mathbb{Z}_{0}^{(\mathbb{N})}$ and this implies $f \in H_{\infty} (\mathbb{T}^{\infty})$.
Let us see that  $\phi(f)=g$, which shows that $\phi$ is onto; indeed, if $\alpha = (\alpha_{1}, \ldots , \alpha_{n_{0}}, 0 , \ldots)$ then for $n_{k} \geq n_{0}$ we have
\[
\langle \tilde{f}_{n_{k}} , w^{\alpha} \rangle
= \int_{\mathbb{T}^{\infty}} \tilde{f}_{n_{k}} (w) w^{-\alpha} dm(w)
= \int_{\mathbb{T}^{n_{k}}} f_{n_{k}} (w) w^{-\alpha} dm_{n_{k}}(w)
= \hat{f}_{n_{k}} (\alpha) = c_{\alpha} (g) \, .
\]
Hence  $\hat{f}(\alpha)= c_{\alpha} (g)$ for all $\alpha \in \mathbb{N}_{0}^{(\mathbb{N})}$. Furthermore, since $\Vert f \Vert_{\infty} \leq \Vert g \Vert_{\infty} = \Vert \phi(f) \Vert_{\infty}$ we also get that
$\phi$ is an isometry.\\
Let us fix $P \in \mathcal{P}(^{m} c_{0})$ and show that $\phi^{-1}(P)(w)= \tilde{P}(w)$ for every $w \in \mathbb{T}^{\infty}$. We
choose $(J_{k})_{k}$ a sequence of finite families of multi-indexes included in $\{\alpha: \alpha\in \mathbb{N}_{0}^{(\mathbb{N})}: \vert \alpha \vert =m\}$ and such that the
sequence  $P_{k}=\sum_{\alpha\in J_k} c_{\alpha,k} x^{\alpha}$ converges uniformly to $P$ on the unit ball of $c_{0}$. Since each $J_k$ is finite, we have
\[
\phi^{-1}(P_k)(w)=\sum_{\alpha\in J_k}c_{\alpha,k}w^{\alpha}=\tilde{P}_k(w) \, ,
\]
for every $w\in \mathbb{T}^\infty$. The linearity of the $AB$ operator and \eqref{DavieGamelin} give that  $\Vert \tilde{P}-\tilde{P}_{k} \Vert= \Vert P-P_{k} \Vert =
\Vert \phi^{-1}(P)-\phi^{-1}(P_{k}) \Vert$ converges to 0 and complete the proof.\\
Observe that this argument actually works to prove that $\phi^{-1}(g)(w)=\tilde{g}(w)$ for every $w\in \mathbb{T}^\infty$ and every function $g$ in the completion of the space
 of all polynomials on $c_{0}$.
\end{proof}

\noindent For the proof of part \textit{(\ref{general p})}  of Theorem~\ref{general} we need some previous work. We handle first the case $p=2$. Here, since
$H_{2}(\mathbb{T}^{\infty})$ is a Hilbert space where $\{w^{\alpha} \}_{\alpha \in \mathbb{N}_{0}^{(\mathbb{N})}}$
is an orthonormal basis we have $\Vert f \Vert_{2} = \big( \sum_{\alpha} \vert \hat{f}(\alpha) \vert^{2} \big)^{\frac{1}{2}}$. This simplifies a lot the problem and we can
readily get the result in this case.
\begin{theorem} \label{H2}
  We have $\mon H_{2}(\mathbb{T}^{\infty}) = \ell_{2} \cap \mathbb{D}^{\infty}$ and for each $z \in \ell_{2} \cap \mathbb{D}^{\infty}$ and
$f \in H_{2}(\mathbb{T}^{\infty})$,
\begin{equation} \label{eq:H2}
  \sum_{\alpha \in \mathbb{N}_{0}^{(\mathbb{N})}} \vert \hat{f} (\alpha) z^{\alpha} \vert
\leq \Big( \prod_{n=1}^{\infty} \frac{1}{1-\vert z_{n} \vert^{2}} \Big)^{\frac{1}{2}} \Vert f \Vert_{2} \, .
\end{equation}
Moreover, the constant $\big( \prod_{n} \frac{1}{1-\vert z_{n} \vert^{2}} \big)^{1/2}$ is optimal.
\end{theorem}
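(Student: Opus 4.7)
The plan is to exploit that $H_{2}(\mathbb{T}^{\infty})$ is a Hilbert space whose canonical orthonormal basis is precisely $\{w^{\alpha}\}_{\alpha \in \mathbb{N}_{0}^{(\mathbb{N})}}$, so the whole statement reduces to Cauchy--Schwarz together with an explicit geometric-series calculation.

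For the upper inclusion and the norm estimate \eqref{eq:H2}, I would fix $z \in \ell_{2} \cap \mathbb{D}^{\infty}$ and $f \in H_{2}(\mathbb{T}^{\infty})$ and apply Cauchy--Schwarz to $\sum_{\alpha}|\hat{f}(\alpha)|\,|z|^{\alpha}$, obtaining
\[
\sum_{\alpha} |\hat{f}(\alpha) z^{\alpha}| \leq \Big(\sum_{\alpha} |\hat{f}(\alpha)|^{2}\Big)^{1/2} \Big(\sum_{\alpha} |z|^{2\alpha}\Big)^{1/2} = \Vert f \Vert_{2} \Big(\sum_{\alpha} |z|^{2\alpha}\Big)^{1/2}.
\]
Since $\sup_{n}|z_{n}| < 1$, the geometric series in each coordinate converges, and Tonelli gives
\[
\sum_{\alpha \in \mathbb{N}_{0}^{(\mathbb{N})}} |z|^{2\alpha} = \prod_{n=1}^{\infty} \frac{1}{1-|z_{n}|^{2}},
\]
where the infinite product converges because $z_{n} \to 0$ (a consequence of $z \in \ell_{2}$) forces $-\log(1-|z_{n}|^{2}) \leq 2|z_{n}|^{2}$ for all sufficiently large $n$, so that $\sum_{n}\log(1-|z_{n}|^{2})^{-1} < \infty$. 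This yields both \eqref{eq:H2} and the inclusion $\ell_{2} \cap \mathbb{D}^{\infty} \subseteq \mon H_{2}(\mathbb{T}^{\infty})$.

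For the reverse inclusion, the fastest route is to note that $H_{2}^{1}(\mathbb{T}^{\infty}) \subseteq H_{2}(\mathbb{T}^{\infty})$, hence $\mon H_{2}(\mathbb{T}^{\infty}) \subseteq \mon H_{2}^{1}(\mathbb{T}^{\infty})$ directly from the definitions; Theorem~\ref{final} with $p=2$ and $m=1$ identifies the right-hand side with $\ell_{2} \cap \mathbb{D}^{\infty}$, completing the set equality.

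Finally, for the optimality of the constant, I would, for each fixed $z \in \ell_{2} \cap \mathbb{D}^{\infty}$, exhibit a single function saturating \eqref{eq:H2}, namely the $g \in H_{2}(\mathbb{T}^{\infty})$ determined by the Fourier coefficients $\hat{g}(\alpha) := |z|^{\alpha}$; the preceding computation shows $\sum_{\alpha} |\hat{g}(\alpha)|^{2} = \prod_{n}(1-|z_{n}|^{2})^{-1} < \infty$, so $g$ indeed lies in $H_{2}(\mathbb{T}^{\infty})$. Then
\[
\Vert g \Vert_{2}^{2} = \prod_{n=1}^{\infty} \frac{1}{1-|z_{n}|^{2}} \quad \text{and} \quad \sum_{\alpha} |\hat{g}(\alpha) z^{\alpha}| = \sum_{\alpha} |z|^{2\alpha} = \Vert g \Vert_{2}^{2},
\]
so the ratio $\sum_{\alpha} |\hat{g}(\alpha) z^{\alpha}|/\Vert g \Vert_{2}$ equals exactly $\big(\prod_{n}(1-|z_{n}|^{2})^{-1}\big)^{1/2}$. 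This is simply the equality case of Cauchy--Schwarz (coefficients proportional to $|z|^{\alpha}$), which is essentially the only available strategy; accordingly there is no serious obstacle, the sole subtlety being the convergence of the infinite product, which the $\ell_{2}$ hypothesis already handles.
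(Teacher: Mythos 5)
Your proposal is correct and follows essentially the same route as the paper: Cauchy--Schwarz plus the geometric-series identity $\sum_{\alpha}|z|^{2\alpha}=\prod_{n}(1-|z_{n}|^{2})^{-1}$ for the inclusion $\ell_{2}\cap\mathbb{D}^{\infty}\subseteq\mon H_{2}(\mathbb{T}^{\infty})$ and the estimate \eqref{eq:H2}, and the chain $\mon H_{2}(\mathbb{T}^{\infty})\subseteq\mon H_{2}^{1}(\mathbb{T}^{\infty})=\ell_{2}\cap\mathbb{D}^{\infty}$ via Theorem~\ref{final} for the reverse inclusion. The only (harmless) deviation is in the optimality step: the paper tests the inequality against the truncations $f_{n}(w)=\sum_{\alpha\in\mathbb{N}_{0}^{n}}z^{\alpha}w^{\alpha}$ and lets $n\to\infty$, whereas you exhibit the single extremal function $g$ with $\hat{g}(\alpha)=|z|^{\alpha}$, which is legitimate since $\sum_{\alpha}|z|^{2\alpha}<\infty$ guarantees $g\in H_{2}(\mathbb{T}^{\infty})$, and it realizes the equality case of Cauchy--Schwarz directly.
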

\begin{proof}
The fact that $\ell_{2} \cap \mathbb{D}^{\infty} \subseteq \mon H_{2}(\mathbb{T}^{\infty})$ follows by using the Cauchy-Schwarz inequality in a similar way as un in
\eqref{CauchySchwarz}:
\[
  \sum_{\alpha \in \mathbb{N}_{0}^{(\mathbb{N})}} \vert \hat{f} (\alpha) z^{\alpha} \vert
\leq \Big( \sum_{\alpha \in \mathbb{N}_{0}^{(\mathbb{N})}} \vert \hat{f}(\alpha) \vert^{2} \Big)^{\frac{1}{2}}
\Big( \sum_{\alpha \in \mathbb{N}_{0}^{(\mathbb{N})}} \vert z \vert^{2 \alpha} \Big)^{\frac{1}{2}}
= \Vert f \Vert_{2} \Big( \prod_{n=1}^{\infty} \frac{1}{1-\vert z_{n} \vert^{2}} \Big)^{\frac{1}{2}} < \infty \, .
\]

\noindent On the other hand, since $H_{2}^{1}(\mathbb{T}^{\infty}) \subseteq H_{2}(\mathbb{T}^{\infty})$ we have that $\mon H_{2}(\mathbb{T}^{\infty})$ is a  subset of
$\mon H_{2}^{1}(\mathbb{T}^{\infty})$ and Theorem~\ref{final} gives the conclusion.\\
To see that the constant in the inequality is optimal, let us fix $z$ in $\mon H_{2}(\mathbb{T}^{\infty})$ and take $c>0$ such that
\[
   \sum_{\alpha \in \mathbb{N}_{0}^{(\mathbb{N})}} \vert \hat{f} (\alpha) z^{\alpha} \vert \leq c \Vert f \Vert_{2} \, .
\]
For each $n \in \mathbb{N}$ we consider the function $f_{n}(w)= \sum_{\alpha \in \mathbb{N}_{0}^{n}} z^{\alpha} w^{\alpha}$ that clearly satisfies
$f_{z} \in H_{2}(\mathbb{T}^{\infty})$ and $\hat{f}_{z}(\alpha) = z^{\alpha}$ for every $\alpha \in \mathbb{N}_{0}^{n}$ (and $0$ otherwise). Hence
\[
  \sum_{\alpha \in \mathbb{N}_{0}^{n}} \vert z^{\alpha} \vert^{2}
= \sum_{\alpha \in \mathbb{N}_{0}^{n}} \vert \hat{f}_{z} (\alpha) z^{\alpha} \vert
\leq c \Vert f \Vert_{2} = c \Big( \sum_{\alpha \in \mathbb{N}_{0}^{n}} \vert z^{\alpha}  \vert^{2} \Big)^{\frac{1}{2}} \, .
\]
This gives
\[
  c \geq  \Big( \sum_{\alpha \in \mathbb{N}_{0}^{n}} \vert z^{\alpha}  \vert^{2} \Big)^{\frac{1}{2}}
= \Big( \prod_{n=1}^{n} \frac{1}{1-\vert z_{n} \vert^{2}} \Big)^{\frac{1}{2}}
\]
for every $n$. Hence $ c \geq  \big( \prod_{n=1}^{\infty} \frac{1}{1-\vert z_{n} \vert^{2}} \big)^{1/2}$ and the proof is completed.
\end{proof}
Since $H_{p}(\mathbb{T}^{\infty}) \subseteq H_{q}(\mathbb{T}^{\infty})$ for all $p \geq q$, the previous result gives $\ell_{2} \cap \mathbb{D}^{\infty}
\subseteq \mon H_{p}(\mathbb{T}^{\infty})$ for every $2 \leq p$. To get the remaining case $1 \leq p < 2$ we need to somehow relate $H_{1}(\mathbb{T}^{\infty})$
and $H_{2}(\mathbb{T}^{\infty})$. \\
We are going to use now the fact that $H_{1}^{m}(\mathbb{T}^{\infty}) = H_{2}^{m}(\mathbb{T}^{\infty})$ with the control of the constants on
the equivalence of the norms that we got in Lemma~\ref{bonami}.\\

\noindent We will also need the following lemma, an $H_{p}$--version of \cite[Satz~VI]{Bo13_Goett} (see also \cite[Lemma~2]{DeGaMaPG08}).
\begin{lemma} \label{annalen}
 Let $z \in \mon H_{p}(\mathbb{T}^{\infty})$  and $x = (x_{n})_{n} \in \mathbb{D}^{\infty}$ such that $\vert x_{n} \vert \leq \vert z_{n} \vert$ for
all but finitely many $n$'s. Then $x \in  \mon H_{p}(\mathbb{T}^{\infty})$.
\end{lemma}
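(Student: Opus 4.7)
The plan is to reduce to the case $|x_n| \leq |z_n|$ for every $n$, and then to prove that case via a contractive multiplier on $H_p(\mathbb{T}^\infty)$. Let $F = \{n \colon |x_n| > |z_n|\}$, which is finite by hypothesis, and define $z' \in \mathbb{D}^\infty$ to agree with $z$ off $F$ and with $x$ on $F$; then $|x_n| \leq |z'_n|$ for every $n$. It suffices to show (a) $z' \in \mon H_p(\mathbb{T}^\infty)$, and (b) componentwise domination with modulus-ratio at most $1$ everywhere preserves membership in $\mon H_p(\mathbb{T}^\infty)$.

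For (a), fix $f \in H_p(\mathbb{T}^\infty)$. The closed graph theorem gives $c_z > 0$ with $\sum_\alpha |\hat g(\alpha) z^\alpha| \leq c_z \|g\|_p$ for every $g \in H_p(\mathbb{T}^\infty)$. For each $\beta \in \mathbb{N}_0^F$ define the slice
\[
 f_\beta(\tilde w) = \int_{\mathbb{T}^F} f(w_F, \tilde w)\,\overline{w_F}^{\,\beta}\, dm_F(w_F),
\]
so that $\hat f_\beta(\tilde\alpha) = \hat f((\beta, \tilde\alpha))$, $f_\beta \in H_p(\mathbb{T}^{\mathbb{N}\setminus F}) \subseteq H_p(\mathbb{T}^\infty)$, and $\|f_\beta\|_p \leq \|f\|_p$ by Jensen's inequality. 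Since $z'$ coincides with $z$ off $F$ and $|z'_n|<1$ for all $n \in F$,
\[
 \sum_\alpha |\hat f(\alpha)(z')^\alpha|
 = \sum_{\beta \in \mathbb{N}_0^F}|z'|^\beta \sum_{\tilde\alpha} |\hat f_\beta(\tilde\alpha)|\,|z|^{\tilde\alpha}
 \leq c_z \|f\|_p \prod_{n\in F}\frac{1}{1-|z'_n|} < \infty,
\]
establishing $z' \in \mon H_p(\mathbb{T}^\infty)$.

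For (b), I would construct, for each $\mu \in \overline{\mathbb{D}}^\infty$, a contraction $T_\mu$ on $H_p(\mathbb{T}^\infty)$ with $\widehat{T_\mu f}(\alpha)=\mu^\alpha \hat f(\alpha)$. In the $n$-th variable, writing $\mu_n = |\mu_n|e^{i\theta_n}$, compose a rotation by $\theta_n$ (an $L_p$-isometry) with a one-dimensional Poisson integral of radius $|\mu_n|$ (contractive, since the Poisson kernel is a probability density and Minkowski's integral inequality applies); on Fourier modes with $\alpha_n \geq 0$ this multiplies by $\mu_n^{\alpha_n}$. The coordinate operators commute, and on any given trigonometric polynomial all but finitely many act trivially, so their composition is well defined on polynomials and extends by density to a contraction on $H_p(\mathbb{T}^\infty)$ for $1 \leq p < \infty$; for $p = \infty$, Proposition~\ref{H intftys} reduces the claim to the elementary dilation $f \mapsto f(\mu \cdot)$ on $H_\infty(\mathbb{D}_0^\infty)$. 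Applied with $\mu_n = x_n/z'_n$ (with the convention $0/0 = 0$) so that $|\mu_n| \leq 1$ everywhere, this yields $T_\mu f \in H_p(\mathbb{T}^\infty)$ and, by (a),
\[
 \sum_\alpha |\hat f(\alpha)\,x^\alpha| = \sum_\alpha |\widehat{T_\mu f}(\alpha)\,(z')^\alpha| < \infty,
\]
proving $x \in \mon H_p(\mathbb{T}^\infty)$. I expect the main obstacle to be the clean construction of $T_\mu$ when $p=\infty$, where density of trigonometric polynomials is not available in norm; once that is handled (either by weak-$*$ approximation or by the reduction to $H_\infty(\mathbb{D}_0^\infty)$ above), both steps are short Fubini-type computations controlled by the closed graph constant.
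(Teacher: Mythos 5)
Your step (a) is, in substance, the paper's entire proof: the paper likewise integrates out the finitely many exceptional coordinates against $w^{-\beta}$ (following \cite[Lemma~2]{DeGaMaPG08}), checks that each slice $f_{\beta}$ stays in $H_{p}(\mathbb{T}^{\infty})$ with $\Vert f_{\beta} \Vert_{p} \leq \Vert f \Vert_{p}$, applies the closed-graph constant $c_{z}$ to each slice, and sums the resulting geometric series over the finitely many bad coordinates. So that part is correct and matches the paper.

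Your step (b), however, is unnecessary, and the fact that you single it out as ``the main obstacle'' suggests a misreading of the definition \eqref{mon Hardy}: membership of $x$ in $\mon H_{p}(\mathbb{T}^{\infty})$ only requires $\sum_{\alpha} \vert \hat{f}(\alpha) x^{\alpha} \vert < \infty$, and this quantity is monotone under coordinatewise domination of moduli. Once $\vert x_{n} \vert \leq \vert z'_{n} \vert$ for every $n$, the termwise inequality $\vert \hat{f}(\alpha) x^{\alpha} \vert \leq \vert \hat{f}(\alpha) (z')^{\alpha} \vert$ and comparison of nonnegative series finish the proof in one line; no multiplier operator $T_{\mu}$, no Poisson integrals, and no separate treatment of $p=\infty$ are needed. (Indeed the paper does not even introduce $z'$: it bounds $\sum_{\alpha} \vert \hat{f}(\alpha) x^{\alpha} \vert$ directly, replacing $\vert x_{n} \vert$ by $\vert z_{n} \vert$ for $n$ outside the finite exceptional set inside the nonnegative sum.) The multiplier construction you sketch would work for $1 \leq p < \infty$, but it is machinery solving a problem that is not there.
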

\begin{proof}
 We follow  \cite[Lemma~2]{DeGaMaPG08} and choose $r \in \mathbb{N}$ such that $\vert x_{n} \vert \leq \vert z_{n} \vert$ for all $n > r$.
We also take $a>1$ such that $\vert z_{n} \vert < \frac{1}{a}$ for $n=1 , \ldots , r$. Let $f \in H_{p}(\mathbb{T}^{\infty})$ with $\Vert f \Vert_{p} \leq 1$.
We fix $n_{1}, \ldots , n_{r} \in \mathbb{N}$ and define for each $u \in \mathbb{T}^{\infty}$,
\[
 f_{n_{1} , \ldots , n_{r}}(u) = \int_{\mathbb{T}^{r}} f(w_{1}, \ldots , w_{r}, u_{1}, \ldots ) w_{1}^{-n_{1}} \cdots  w_{r}^{-n_{r}} d m_{r} (w_{1} , \ldots ,w_{r}) \, .
\]
Let us see that $f_{n_{1} , \ldots , n_{r}} \in H_{p}(\mathbb{T}^{\infty})$; indeed, using H\"{o}lder  inequality we have
\begin{multline*}
 \bigg( \int_{\mathbb{T}^{\infty}}  \vert f_{n_{1} , \ldots , n_{r}}  (u) \vert^{p} dm(u) \bigg)^{\frac{1}{p}} \\
 = \bigg( \int_{\mathbb{T}^{\infty}} \Big\vert \int_{\mathbb{T}^{r}} f(w_{1}, \ldots , w_{r}, u_{1}, \ldots ) w_{1}^{-n_{1}} \cdots  w_{r}^{-n_{r}}
dm_{r} (w_{1} , \ldots , w_{r})  \Big\vert^{p} dm(u) \bigg)^{\frac{1}{p}} \\
\leq \bigg( \int_{\mathbb{T}^{\infty}} \Big( \int_{\mathbb{T}^{r}} \vert f(w_{1}, \ldots , w_{r}, u_{1}, \ldots )  \vert^{p}
dm_{r}(w_{1} , \ldots , w_{r})   dm(u) \bigg)^{\frac{1}{p}} = \Vert f  \Vert_{p} \, .
\end{multline*}
Hence $f_{n_{1} , \ldots , n_{r}} \in L_{p} (\mathbb{T}^{\infty})$ and $\Vert f_{n_{1} , \ldots , n_{r}} \Vert_{p} \leq  \Vert f \Vert_{p} \leq 1$.\\
Now we have, for $\alpha = (\alpha_{1}, \ldots , \alpha_{k}, 0, \ldots)$
\begin{align*}
 \hat{f}_{n_{1} , \ldots , n_{r}}(\alpha)
= & \int_{\mathbb{T}^{\infty}} f_{n_{1} , \ldots , n_{r}} (u) u^{- \alpha} dm(u) \\
& \bigg( \int_{\mathbb{T}^{\infty}} \int_{\mathbb{T}^{r}} \frac{f(w_{1}, \ldots , w_{r}, u_{1}, \ldots , u_{k} ) }%
{w_{1}^{n_{1}} \cdots  w_{r}^{n_{r}} u_{1}^{ \alpha_{1}} \cdots u_{k}^{ \alpha_{k}}} dm_{r} (w_{1}, \ldots , w_{r}) dm_{k} (u_{1}, \ldots , u_{k}) \bigg)  \\
= & \hat{f}(n_{1} , \ldots , n_{r}, \alpha_{1}, \ldots , \alpha_{k}) \, .
\end{align*}
Therefore
\[
  \hat{f}_{n_{1} , \ldots , n_{r}}(\alpha) =
\begin{cases}
 \hat{f}(n_{1} , \ldots , n_{r}, \alpha_{1}, \ldots , \alpha_{k})  & \text{ if } \alpha= (0,\stackrel{r}{\ldots} , 0, \alpha_{1}, \ldots ) \\
0 & \text{ otherwise}
\end{cases}
\]
and this implies  $f_{n_{1} , \ldots , n_{r}} \in H_{p} (\mathbb{T}^{\infty})$. Now, using \eqref{Baire}
and doing exactly the same calculations as in \cite[Lemma 2]{DeGaMaPG08}
we conclude $\sum_{\alpha} \vert \hat{f}(\alpha) x^{\alpha} \vert < \infty$ and  $x$ belongs to $\mon H_{p}(\mathbb{T}^{\infty})$.
\end{proof}

\begin{proof}[Proof of Theorem~\ref{general}--(\ref{general p})]
 Let us remark first that since $H_{p}(\mathbb{T}^{\infty}) \subseteq H_{1}(\mathbb{T}^{\infty})$ we have $\mon H_{1}(\mathbb{T}^{\infty}) \subseteq
\mon H_{p}(\mathbb{T}^{\infty})$. Then to get the lower bound it is enough to show
that $\ell_{2} \cap  \mathbb{D}^{\infty} \subseteq \mon H_{1}(\mathbb{T}^{\infty}) $. As a first step we show that
there exists $0<r< 1$ such that $r B_{\ell_{2}} \cap \mathbb{D}^{\infty} \subseteq \mon H_{1}(\mathbb{T}^{\infty})$.  Let $r < 1 / \sqrt{2}$ and choose $f \in H_{1}(\mathbb{T}^{\infty})$ and $z \in r B_{\ell_{2}} \cap \mathbb{D}^{\infty}$.  Then
$z = r y$ for some $y \in  B_{\ell_{2}}$. By \cite[9.2 Theorem]{CoGa86} there exists a projection
$P_{m} : H_{1}(\mathbb{T}^{\infty}) \to H_{1}^{m}(\mathbb{T}^{\infty})$ such that $\Vert P_{m} g \Vert_{1} \leq \Vert g \Vert_{1}$ for every
$g \in  H_{1}(\mathbb{T}^{\infty})$. We write $f_{m} = P_{m} (f)$ and we have $\hat{f}_{m}(\alpha) = \hat{f}(\alpha)$ if $\vert \alpha \vert = m$ and $0$ otherwise.
Then
\begin{multline*}
 \sum_{\alpha} \vert \hat{f}(\alpha) z^{\alpha} \vert = \sum_{m=0}^{\infty} \sum_{\vert \alpha \vert =m} \vert \hat{f}(\alpha) (r y)^{\alpha} \vert
= \sum_{m=0}^{\infty} \sum_{\vert \alpha \vert =m} \vert \hat{f}_{m}(\alpha) (r y)^{\alpha} \vert \\
\leq K \sum_{m=0}^{\infty} r^{m} \Vert f_{m} \Vert_{2}
\leq K \sum_{m=0}^{\infty} r^{m} (\sqrt{2})^{m} \Vert f_{m} \Vert_{1}
\leq K \sum_{m=0}^{\infty} (r\sqrt{2})^{m} \Vert f \Vert_{1} < \infty \, ,
\end{multline*}
where in the first inequality we used that $y \in \ell_{2}$ and \eqref{eq:H2}, in the second one we used Lemma~\ref{bonami} and in the last one that the projection
is a contraction. \\
Take now $z \in \ell_{2} \cap  \mathbb{D}^{\infty}$; then $\big( \sum_ {n=n_{0}}^{\infty} \vert z_{n} \vert^{2} \big)^{\frac{1}{2}} < r$ for some $n_{0}$. We define
$x=(0, \ldots , 0 , z_{n_{0}}, z_{n_{0}+1}, \ldots )$; clearly $x \in r B_{\ell_{2}} \cap \mathbb{D}^{\infty}$ and
$x \in \mon H_{1}(\mathbb{T}^{\infty})$, then Lemma~\ref{annalen} implies $z \in \mon H_{1} (\mathbb{T}^{\infty})$.\\
For the upper inclusion, by \cite[9.1~Theorem]{CoGa86} $H_{p}^{1}(\mathbb{T}^{\infty})
= H_{2}^{1}(\mathbb{T}^{\infty})$ with equivalent norms for each $1 \leq p < \infty$. This, together with Theorem~\ref{final}, gives
\[
\mon H_{p} (\mathbb{T}^{\infty}) \subseteq \mon H_{p}^{1}(\mathbb{T}^{\infty})  = \mon H_{2}^{1}(\mathbb{T}^{\infty})
\subseteq \ell_{2} \cap \mathbb{D}^{\infty} \, . \qedhere
\]
\end{proof}

\begin{remark} \label{evaluacion}
Let $\mathcal{P}_{\text{fin}}$ be the space of functions given by $\sum_{\alpha\in J} c_{\alpha} z^{\alpha}$ for $z \in \mathbb{C}^{\mathbb{N}}$, where
$J$ is some finite set of multi-indices. The evaluation mapping $\delta_{z} : \mathcal{P}_{\text{fin}} \rightarrow \mathbb{C}$ given by $\delta_{z}(f)=f(z)$ is
clearly well defined for each $z \in \mathbb{D}^{\infty}$. The space $\mathcal{P}_{\text{fin}}$ can be identified with the subspace of $H_{p}(\mathbb{T}^{\infty})$ of 
trigonometrical polynomials,
and one of the main problems considered in \cite{CoGa86} is for which $z$'s can $\delta_{z}$ be extended continuously to the whole $H_{p}(\mathbb{T}^{\infty})$.
This can be reformulated as to describe the following set
\[
 \{ z \in \mathbb{D}^{\infty} \colon \exists c_{z} \forall f \in \mathcal{P}_{\text{fin}} \, , \, \, \vert f(z) \vert \leq c_{z} \Vert f \Vert_{p} \} \, .
\]
Since for each $f  \in \mathcal{P}_{\text{fin}}$ and every $\alpha$ we have $\hat{f}(\alpha)=c_{\alpha}$, the previous set can be written as
\begin{equation} \label{Ep}
  \Big\{ z \in \mathbb{D}^{\infty} \colon \exists c_{z} \forall f \in \mathcal{P}_{\text{fin}} \, , \, \,
\Big\vert \sum_{\alpha} \hat{f}(\alpha) z^{\alpha} \Big\vert \leq c_{z} \Vert f \Vert_{p} \Big\} \, .
\end{equation}
In \cite[8.1 Theorem]{CoGa86} it is shown that for $1 \leq p < \infty$ the set in \eqref{Ep} is exactly $\ell_{2} \cap \mathbb{D}^{\infty}$.\\
By a closed-graph argument, for each $1 \leq p < \infty$ a sequence $z$ belongs to $\mon H_{p}(\mathbb{T}^{\infty})$ if and only if there exists $c_{z}$ such that for every
$f \in H_{p}(\mathbb{T}^{\infty})$
\begin{equation} \label{Baire}
\sum_{\alpha \in \mathbb{N}_{0}^{(\mathbb{N})}} \vert \hat{f}(\alpha) z^{\alpha} \vert
\leq c_{z} \Big( \int_{\mathbb{T}^{\infty}} \big\vert f( w) \big\vert^{p} dm(w)\Big)^{\frac{1}{p}} \,.
\end{equation}
This implies
\begin{equation} \label{mondom rewritten}
 \mon H_{p} (\mathbb{T}^{\infty}) =   \Big\{ z \in \mathbb{D}^{\infty} \colon \exists c_{z} \forall f \in \mathcal{P}_{\text{fin}} \, , \, \,
\sum_{\alpha} \vert  \hat{f}(\alpha) z^{\alpha} \vert \leq c_{z} \Vert f \Vert_{p} \Big\} \, .
\end{equation}
In view of  \eqref{mondom rewritten} we have that $\mon H_{p} (\mathbb{T}^{\infty})$ is contained in the set in  \eqref{Ep}. Then the upper inclusion
in Theorem~\ref{general}-\textit{(\ref{general p})} follows from \cite[8.1 Theorem]{CoGa86}. The proof we presented here is independent from that in \cite{CoGa86}.
But the lower inclusion in Theorem~\ref{general}-\textit{(\ref{general p})} is stronger than the result in \cite{CoGa86}.\\
The problem of describing the set in \eqref{Ep} for $p=\infty$ remains open in\cite{CoGa86}. We can give now a partial answer:
our Theorem~\ref{Leonhard} gives (using the notation from Section~\ref{sec:comments}),
\begin{align*}
 (\ell_{2,0}\cap \mathbb{D}^\infty) \cup \frac{1}{\sqrt{2}} B_{\ell_{2,\infty}} & \subseteq \frac{1}{\sqrt{2}} \mathbf{B} \\
\subseteq &
 \Big\{ z \in \mathbb{D}^{\infty} \colon \exists c_{z} \forall f \in \mathcal{P}_{\text{fin}} \, , \, \,
\Big\vert \sum_{\alpha} \hat{f}(\alpha) z^{\alpha} \Big\vert \leq c_{z} \Vert f \Vert_{\infty} \Big\} \, .
\end{align*}
\end{remark}

\begin{remark}
 The isomorphic Bohr abscissa $\varrho ( \mathcal{E})$ of a space of Dirichlet series $\mathcal{E}$ is defined (see \cite{BaCaQu06}) as the infimum
of the $\sigma \geq 0$ such that $\sum_{n} \vert a_{n} \vert n^{-\sigma} < \infty$ for all Dirichlet series in $\mathcal{E}$. Furthermore, it is said that the
Bohr abscissa is attained if $\sum_{n} \vert a_{n} \vert n^{-\varrho ( \mathcal{E})} < \infty$ for every  Dirichlet series in $\mathcal{E}$. Then
\cite[Theorem 1.1]{BaCaQu06} shows that $\varrho ( \mathcal{H}_{p} )=1/2$ for every $1 \leq p \leq \infty$ but only for $p=\infty$ is attained;
i.e. $\varrho ( \mathcal{H}_{p} )$ is not attained  for every $1 \leq p < \infty$ ($\mathcal{H}_{p}$ is the space of Dirichlet series whose associated power
series expansions belong to $H_{p}(\mathbb{T}^{\mathbb{N}})$). \\
The fact that it is not attained for $1 \leq p < \infty$ has a straightforward interpretation in terms of convergence of monomial expansions. Suppose the Bohr abscissa
is attained for some $p$. This would mean that $\sum_{n} \vert a_{n} \vert n^{-1/2} < \infty$ for every  Dirichlet series in $\mathcal{H}_{p}$. Then
$\sum_{\alpha} \vert a_{p^{\alpha}} \vert (p^{\alpha})^{-1/2} = \sum_{\alpha} \vert a_{p^{\alpha}} \vert \frac{1}{(p^{1/2})^{\alpha}} < \infty$
for every  Dirichlet series in $\mathcal{H}_{p}$. By doing $a_{p^{\alpha}} = \hat{f}(\alpha)$ this is equivalent to
$\sum_{\alpha} \vert \hat{f}(\alpha) \vert \frac{1}{(p^{1/2})^{\alpha}} < \infty$ for every $f \in H_{p}(\mathbb{T}^{\mathbb{N}})$ or, in other words,
$\big( \frac{1}{p_{n}^{1/2}} \big)_{n} \in \mon H_{p}(\mathbb{T}^{\mathbb{N}}) = \ell_{2} \cap \mathbb{D}^{\mathbb{N}}$. But this is not true.
This gives an alternative proof of the already known fact \cite[Theorem~1.1]{BaCaQu06} that the abscissa is not attained.
\end{remark}

\section{Representation of Hardy spaces}

We have seen in Proposition~\ref{H intftys} how, like in the finitely dimensional case, the Hardy space $H_{\infty} (\mathbb{T}^{\infty})$ can be represented as a
space of holomorphic functions on $\mathbb{D}_{0}^{\infty}$. In \cite[10.1 Theorem]{CoGa86} it is proved that every element of $H_p(\mathbb{T}^\infty)$ can be represented by an holomorphic function of bounded type on $\mathbb{D}^{\infty} \cap \ell_{2}$. A characterization of the holomorphic functions coming from elements of $H_p(\mathbb{T}^\infty)$  can be given for $1\leq p<\infty$, in terms of the following Banach space
\begin{multline*}
H_{p}(\mathbb{D}^{\infty})   = \big\{ g \in H(\mathbb{D}^{\infty} \cap \ell_{2}) \, \colon \, \\
\Vert g \Vert_{H_{p}(\mathbb{D}^{\infty})} =
\sup_{n \in \mathbb{N}} \sup_{0<r<1} \Big( \int_{\mathbb{T}^{n}} \vert g(rw_{1}, \ldots , r w_{n}) \vert^{p} dm_{n}(w_{1}, \ldots , w_{n}) \Big)^{\frac{1}{p}} < \infty \big\} \, .
\end{multline*}
Then we have

\begin{theorem} \label{Hps}
For each $1\leq  p<\infty$ the mapping
$\phi : H_{p}(\mathbb{T}^{\infty}) \rightarrow   H_{p}(\mathbb{D}^{\infty})$ defined by
$\phi(f)(z)= \sum_{\alpha \in \mathbb{N}_{0}^{(\mathbb{N})}} \hat{f}(\alpha) z^{\alpha}$ for $z \in \mathbb{D}^{\infty} \cap \ell_{2}$
is an isometry, that is onto if $1< p<\infty$, and   $\phi(H_{1}(\mathbb{T}^{\infty}))=\overline{\spa}^{\Vert \cdot \Vert_{1}}\{z^{\alpha} :\alpha \in \mathbb{N}_{0}^{(\mathbb{N})}\}$.

\end{theorem}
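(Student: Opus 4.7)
My plan is to build $\phi$ from the finite--dimensional Hardy space isomorphisms $H_p(\mathbb{T}^n)\cong H_p(\mathbb{D}^n)$ (standard, see Rudin's \emph{Function Theory in Polydiscs}) and pass to the limit via conditional expectations. First I would check that $\phi(f)$ is well defined and holomorphic on $\mathbb{D}^{\infty}\cap\ell_{2}$. For $f\in H_p(\mathbb{T}^\infty)$ the series $\sum_\alpha \hat f(\alpha)z^{\alpha}$ converges absolutely on $\ell_{2}\cap \mathbb{D}^\infty$: for $p\ge 2$ this is Theorem~\ref{H2} (via $H_{p}\subseteq H_{2}$), while for $1\le p<2$ one combines the inclusion $H_p\subseteq H_1$ with the lower bound already established in the proof of Theorem~\ref{general}-(\ref{general p}). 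The uniform estimate coming from \eqref{eq:H2} (or, for $p=1$, an analogous bound using Lemma~\ref{bonami} applied to $P_m f$) gives uniform absolute convergence on sets $\{z:|z_n|\le r_n\}$ with $(r_n)\in\ell_2$, $\|r\|_2<1$, hence $\phi(f)$ is Fr\'echet holomorphic on $\mathbb{D}^{\infty}\cap\ell_{2}$.

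The key observation for the isometry is that the restriction of $\phi(f)$ to the first $n$ coordinates (with the remaining coordinates set to $0$) is precisely the Poisson--type extension to $\mathbb{D}^n$ of the conditional expectation
\[
 f_n(w_1,\dots,w_n)=\int_{\mathbb{T}^\infty} f(w_1,\dots,w_n,\tilde w)\,dm(\tilde w)\in H_p(\mathbb{T}^n),
\]
since $\widehat{f_n}(\alpha)=\hat f(\alpha)$ for $\alpha\in\mathbb{N}_0^n$ (exactly as in the proof of Proposition~\ref{H intftys}). The finite--dimensional theorem gives
\[
 \sup_{0<r<1}\Bigl(\int_{\mathbb{T}^n}|\phi(f)(rw_1,\dots,rw_n,0,\dots)|^p dm_n(w)\Bigr)^{1/p}=\|f_n\|_p,
\]
so $\|\phi(f)\|_{H_p(\mathbb{D}^\infty)}=\sup_n\|f_n\|_p$. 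Jensen's inequality gives $\|f_n\|_p\le\|f\|_p$, and the reverse inequality comes from the martingale $(f_n)$: with respect to the filtration generated by the first $n$ coordinates, $f_n=\mathbb{E}[f\mid\mathcal{F}_n]$, and Doob's theorem gives $f_n\to f$ in $L_p(\mathbb{T}^\infty)$ for $1\le p<\infty$. Hence $\sup_n\|f_n\|_p=\|f\|_p$ and $\phi$ is an isometry.

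For surjectivity when $1<p<\infty$, fix $g\in H_p(\mathbb{D}^\infty)$. For each $n$ the function $g_n(z_1,\ldots,z_n):=g(z_1,\ldots,z_n,0,\ldots)$ lies in the finite--dimensional Hardy space $H_p(\mathbb{D}^n)$ with norm at most $\|g\|_{H_p(\mathbb{D}^\infty)}$, and thus corresponds to a unique $h_n\in H_p(\mathbb{T}^n)$ with $\widehat{h_n}(\alpha)=c_\alpha(g)$ for $\alpha\in\mathbb{N}_0^n$ and $\|h_n\|_p=\|g_n\|_{H_p(\mathbb{D}^n)}$. View $h_n$ as a function in $H_p(\mathbb{T}^\infty)$ depending only on the first $n$ variables; the sequence is bounded in $L_p(\mathbb{T}^\infty)$. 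By reflexivity (here is where $p>1$ is used) some subsequence converges weakly to $f\in L_p(\mathbb{T}^\infty)$. Testing weak convergence against the characters $w\mapsto w^{\beta}$, and noting that $\widehat{h_n}(\beta)$ is eventually constant equal to $c_\beta(g)$ for $\beta\in\mathbb{N}_0^{(\mathbb{N})}$ and eventually equal to $0$ otherwise, one obtains $\hat f(\beta)=c_\beta(g)$ for $\beta\in\mathbb{N}_0^{(\mathbb{N})}$ and $\hat f(\beta)=0$ elsewhere. Thus $f\in H_p(\mathbb{T}^\infty)$ and $\phi(f)$ and $g$ share all monomial coefficients, so (being holomorphic on $\mathbb{D}^{\infty}\cap\ell_{2}$) they coincide.

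The case $p=1$ must be handled separately, because reflexivity fails. Here I would argue by density: trigonometric polynomials are dense in $H_1(\mathbb{T}^\infty)$ (they are dense in each $H_1^m(\mathbb{T}^\infty)$ by Cole--Gamelin, and by the projections $P_m$ used in the proof of Theorem~\ref{general}--(\ref{general p}) every $f\in H_1(\mathbb{T}^\infty)$ is the sum of its homogeneous parts in $L_1$). Since $\phi$ maps trigonometric polynomials bijectively and isometrically onto analytic polynomials in $\{z^\alpha:\alpha\in\mathbb{N}_0^{(\mathbb{N})}\}$, and $\phi$ is an isometry, $\phi(H_1(\mathbb{T}^\infty))$ is exactly the closure $\overline{\spa}^{\|\cdot\|_1}\{z^{\alpha}\}$ in $H_1(\mathbb{D}^\infty)$, as claimed. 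The main obstacle in the whole argument is the onto step for $1<p<\infty$: identifying a weak limit of the finite--dimensional lifts in the correct Hardy subspace of $L_p(\mathbb{T}^\infty)$, which requires care with the weak--$L_p$ topology and the matching of Fourier coefficients.
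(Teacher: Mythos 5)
Your overall architecture matches the paper's: the finite--dimensional isometries $\phi_n : H_p(\mathbb{T}^n)\to H_p(\mathbb{D}^n)$, the conditional expectations $f_n(w_1,\dots,w_n)=\int_{\mathbb{T}^\infty}f(w_1,\dots,w_n,\tilde w)\,dm(\tilde w)$ with $\widehat{f_n}(\alpha)=\hat f(\alpha)$, and a weak(-star) compactness argument for surjectivity when $1<p<\infty$ are exactly the ingredients used there. One step of yours is genuinely different and in fact cleaner: you obtain the isometry for every $1\le p<\infty$ at once from Doob's theorem, viewing $f_n$ as the martingale $\mathbb{E}[f\mid\mathcal{F}_n]$ with respect to the filtration generated by the first $n$ coordinates, so that $f_n\to f$ in $L_p$ and $\sup_n\Vert f_n\Vert_p=\Vert f\Vert_p$. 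The paper only proves the contraction $\Vert\phi(f)\Vert_{H_p(\mathbb{D}^\infty)}\le\Vert f\Vert_p$ at this stage and recovers the reverse inequality for $1<p<\infty$ as a byproduct of the surjectivity construction (the weak$^*$ limit has $L_p$-norm at most $\Vert g\Vert_{H_p(\mathbb{D}^\infty)}$ and, by injectivity, is the preimage), and for $p=1$ from polynomial density; your martingale argument unifies the two cases.

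There is, however, one incorrect justification in your $p=1$ step. You assert that every $f\in H_1(\mathbb{T}^\infty)$ is the $L_1$-sum of its homogeneous parts $\sum_m P_m f$. This fails already in one variable: the partial-sum projections $S_N$ are not uniformly bounded on $H^1(\mathbb{T})$ (their norms grow like $\log N$), so the Taylor series of an $H^1$ function need not converge in $L^1$ norm, and on $\mathbb{T}^\infty$ the operator $\sum_{m\le M}P_m$ is a Dirichlet-kernel convolution in the rotation parameter $\lambda\mapsto f(\lambda w)$, subject to the same obstruction. The fact you actually need --- density of the trigonometric polynomials in $H_1(\mathbb{T}^\infty)$ --- is true, but must be obtained by Fej\'er/Ces\`aro means together with the density in $L_1(\mathbb{T}^\infty)$ of continuous functions depending on finitely many variables, which is precisely how the paper argues. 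With that repair, your $p=1$ conclusion (an isometry carrying the dense subspace $\spa\{w^\alpha\}$ onto $\spa\{z^\alpha\}$, hence $H_1(\mathbb{T}^\infty)$ onto $\overline{\spa}^{\Vert\cdot\Vert_1}\{z^\alpha\}$) goes through and coincides with the paper's.
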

\begin{proof}. Fix $1\leq p<\infty$.
Let us begin by noting that for each fixed $n \in \mathbb{N}$ the mapping $\phi_{n} : H_{p}(\mathbb{T}^{n}) \rightarrow H_{p}(\mathbb{D}^{n})$
given by $\phi_{n}(f)(z)= \sum_{\alpha \in \mathbb{N}_{0}^{n}} \hat{f}(\alpha) z^{\alpha}$ is an isometric isomorphism, where $H_{p}(\mathbb{D}^{n})$ denotes
the Banach space of all holomorphic functions $g : \mathbb{D}^{n} \rightarrow \mathbb{C}$ such that
\[
  \Vert g \Vert_{H_{p}(\mathbb{D}^{n})}=
\sup_{0<r<1} \Big( \int_{\mathbb{T}^{n}} \vert g(rw_{1}, \ldots , rw_{n}) \vert^{p} dm_{n}(w_{1}, \ldots , w_{n}) \Big)^{\frac{1}{p}} < \infty \, .
\]
We show in first place that $\phi$ is well defined and a contraction.
Fix $f \in H_{p}(\mathbb{T}^{\infty})$; we know from Theorem~\ref{general} that $\sum_{\alpha \in  \in  \mathbb{N}_{0}^{(\mathbb{N})}} \vert \hat{f}(\alpha) z^{\alpha} \vert < \infty$ for every
$z \in \mathbb{D}^{\infty} \cap \ell_{2}$, hence the series defines a G\^{a}teaux-differentiable function on $\mathbb{D}^{\infty} \cap \ell_{2}$. We denote the $m$-th Taylor
polynomial of $\phi(f)$ at $0$ by $P_{m}$. Since
\[
  P_{m}(z)= \sum_{\substack{\alpha \in \mathbb{N}_{0}^{n} \\ \vert \alpha \vert =m}}  \hat{f}(\alpha) z^{\alpha}
\]
for $z \in \ell_{2}$, we deduce from \eqref{normas final} that $P \in \mathcal{P}(^{m} \ell_{2})$ and hence $\phi (f)$ defines a holomorphic function on
$\mathbb{D}^{\infty} \cap \ell_{2}$ (see e.g. \cite[Example~3.8]{Di99}). Let us see now that it actually belongs to $H_{p}(\mathbb{D}^{\infty})$.
Following the notation in Proposition~\ref{H intftys} we define for each $n$
\[
  f_{n}(w_{1}, \ldots , w_{n})  = \int_{\mathbb{T}^{\infty}} f(w_{1}, \ldots , w_{n}, \tilde{w}_{n}) dm(\tilde{w}_{n}) \, ,
\]
where $(w_{1}, \ldots , w_{n})\in \mathbb{T}^n$. It is well defined by the Fubini theorem, since $L_{p}(\mathbb{T}^{\infty}) \subseteq L_{1}(\mathbb{T}^{\infty})$. On the other hand,
using the H\"{o}lder inequality and Fubini theorem we get
\begin{align*}
  \int_{\mathbb{T}^{n}} & \vert f_{n} (w_{1}, \ldots , w_{n}) \vert ^{p} dm_{n}  (w_{1}, \ldots , w_{n})  \\
& = \int_{\mathbb{T}^{n}} \Big\vert \int_{\mathbb{T}^{\infty}} f(w_{1}, \ldots , w_{n}, \tilde{w}_{n}) dm(\tilde{w}_{n}) \Big\vert^{p} dm_{n}  (w_{1}, \ldots , w_{n})  \\
& \leq \int_{\mathbb{T}^{n}} \Big( \int_{\mathbb{T}^{\infty}} \vert f(w_{1}, \ldots , w_{n}, \tilde{w}_{n}) \vert dm(\tilde{w}_{n}) \Big)^{p} dm_{n}  (w_{1}, \ldots , w_{n})  \\
& \leq \int_{\mathbb{T}^{n}} \Big( \int_{\mathbb{T}^{\infty}} \vert f(w_{1}, \ldots , w_{n}, \tilde{w}_{n}) \vert^{p} dm(\tilde{w}_{n}) \Big) dm_{n}  (w_{1}, \ldots , w_{n})
\end{align*}
and this implies $f \in L_{p} (\mathbb{T}^{n})$ and $\Vert f_{n} \Vert_{p} \leq \Vert f \Vert_{p}$ for all $n$. Moreover, for $\alpha \in \mathbb{Z}^{n}$ we have
(again using Fubini) $\hat{f}_{n}(\alpha) = \hat{f} (\alpha)$ and $f_{n} \in H_{p} (\mathbb{T}^{n})$. Then $\Vert \phi( f_{n}) \Vert_{H_{p}(\mathbb{D}^{n})}
= \Vert  f_{n} \Vert_{p} \leq \Vert f \Vert_{p}$ for all $n$ and we have
\begin{equation} \label{IV}
\sup_{n \in \mathbb{N}} \sup_{0<r<1} \int_{\mathbb{T}^{n}} \big\vert \sum_{\alpha \in  \mathbb{N}_{0}^{n}}  \hat{f}(\alpha) (rw)^{\alpha} \big\vert  
dm_{n}  (w_{1}, \ldots , w_{n}) \leq \Vert f \Vert_{p} < \infty \, .
\end{equation}
Clearly $\phi(f)(z_{1}, \ldots , z_{n},0 \ldots) = \sum_{\alpha} \hat{f}(\alpha)
z^{\alpha}$ for every $(z_{1}, \ldots , z_{n}) \in  \mathbb{D}^{n}$ and by \eqref{IV} this implies $\phi(f) \in H_{p}(\mathbb{D}^{\infty})$ and
\[
  \Vert \phi(f) \Vert_{H_{p}(\mathbb{D}^{\infty})} \leq \Vert f \Vert_{p} \, .
\]
Now we are going to show that it is also an isometry onto if $1<p<\infty$. Let $g \in H_{p}(\mathbb{D}^{\infty})$ and $g_{n}$ its restriction to the
first $n$ variables. Then, by definition $g_{n} \in H_{p}(\mathbb{D}^{n})$ and $\Vert g_{n} \Vert_{H_{p}(\mathbb{D}^{n})} \leq
\Vert g \Vert_{H_{p}(\mathbb{D}^{\infty})}$. Let us take $f_{n} = \phi_{n}^{-1}(g_{n}) \in H_{p} (\mathbb{T}^{n})$ and define
$\tilde{f}_{n} : \mathbb{T}^{\infty} \to \mathbb{C}$ by $\tilde{f}_{n}(w) = f_{n}(w_{1}, \ldots , w_{n})$ for $w \in \mathbb{T}^{\infty}$. Since we can do this
for every $n$, we have a sequence $(\tilde{f}_{n})_{n}$ contained in the ball of $H_{p} (\mathbb{T}^{\infty})$ centered in $0$ and with radius
$\Vert g \Vert_{H_{p}(\mathbb{D}^{\infty})}$, that is a weak$^{*}$-compact set. Since $L_{q}(\mathbb{T}^{\infty})$ is separable the weak$^{*}$-topology
is metrizable and then there exists a subsequence $(\tilde{f}_{n_{k}})_{k}$ that weak$^{*}$ converges to some $f \in H_{p} (\mathbb{T}^{\infty})$. For each
$\alpha \in \mathbb{N}_{0}^{(\mathbb{N})}$ we then have
\[
  \hat{f}(\alpha) = \langle f , w^{\alpha} \rangle = \lim_{k} \langle \tilde{f}_{n_{k}} , w^{\alpha} \rangle = \hat{\tilde{f}}_{n_{k}}(\alpha) = c_{\alpha} (g) \,.
\]
Hence $\phi(f)= g$ and, moreover, $\Vert f \Vert_{p} \leq \Vert g \Vert_{H_{p}(\mathbb{D}^{\infty})}$, which completes the proof for $1<p<\infty$.\\

For the case $p=1$ we observe that $\phi(w^{\alpha})=z^{\alpha}$  for every $\alpha \in \mathbb{N}_{0}^{(\mathbb{N})}$; then the mapping
\[
\phi:\spa\{[ w \in \mathbb{T}^{\infty} \mapsto w^{\alpha} ] :\alpha \in \mathbb{N}_{0}^{(\mathbb{N})}\} \longrightarrow
\spa\{[ z \in \mathbb{D}^{\infty} \mapsto z^{\alpha} ] :\alpha \in \mathbb{N}_{0}^{(\mathbb{N})}\}
\]
is a linear isomorphism between normed spaces. In fact, since each trigonometrical polynomial $P$ depends on a finite number of variables, we
have $\Vert \phi(P) \Vert_{1}=\Vert P \Vert_{1}$; thus $\phi$ extends to an isometry (that coincides with the original $\phi$)
\[
\phi:\overline{\spa}^{\Vert \cdot \Vert_{1}} \{w^{\alpha} :\alpha \in \mathbb{N}_{0}^{(\mathbb{N})}\}\longrightarrow
\overline{\spa}^{\Vert \cdot \Vert_{1}} \{z^{\alpha} :\alpha \in \mathbb{N}_{0}^{(\mathbb{N})}\}\, .
\]
Finally it is a well known fact that $H_{1}(\mathbb{T}^\infty)=\overline{\spa}^{\Vert \cdot \Vert_{1}}\{w^{\alpha} : \alpha \in \mathbb{N}_{0}^{(\mathbb{N})}\}$
(this follows for example from F\'ejer's Theorem for several variables and the fact that continuous functions  depending only in finitely many variables
are dense in   $L_{1}(\mathbb{T}^\infty)$). This completes the proof.
\end{proof}

\noindent We do not know whether $\overline{\spa}^{\Vert \cdot \Vert_{1}}\{z^{\alpha} :\alpha \in \mathbb{N}_{0}^{(\mathbb{N})}\}$ coincides
with $H_{1}(\mathbb{D}^{\infty})$ or not. But note that any polynomial in $H_{1}(\mathbb{D}^{\infty})$ is actually contained in
$\overline{\spa}^{\Vert \cdot \Vert_{1}}\{z^{\alpha} :\alpha \in \mathbb{N}_{0}^{(\mathbb{N})}\}$ since $H_{1}^m(\mathbb{T}^\infty)=H_2^m(\mathbb{T}^\infty)$
for every $m \in \mathbb{N}$ and $\phi( H_2(\mathbb{T}^\infty))=H_2(\mathbb{D}^\infty)$.

\noindent defant@mathematik.uni-oldenburg.de \\ frerick@uni-trier.de \\ maestre@uv.es \\ psevilla@mat.upv.es

\end{document}